\documentclass[english]{article}

\usepackage{amsthm,amsmath,amssymb}
\usepackage{url} 
\usepackage[authoryear]{natbib}
\usepackage[colorlinks=true,linkcolor=black,citecolor=black,urlcolor=black,breaklinks]{hyperref}
\usepackage{babel}
\usepackage{caption}
\usepackage{subcaption}
\usepackage{graphicx} 
\usepackage{epstopdf}
\usepackage{color}
\usepackage{framed}
\usepackage{lscape}
\usepackage{rotating}
\usepackage{algorithm}
\usepackage[noend]{algpseudocode}
\usepackage{custom_tex}
\usepackage{grffile}
\usepackage{multirow}
\usepackage{xcolor}
\usepackage{setspace}
\usepackage{comment}

\newcommand{\ep}{\varepsilon}



\newcommand{\E}{\mathbb{E}}
\newcommand{\R}{\mathbb{R}}

\newcommand{\V}{\textnormal{Var}}

\newcommand{\op}{\textnormal{op}}
\newcommand{\supp}{\textnormal{supp}}
\newcommand{\bitem}{\begin{itemize}}
\newcommand{\eitem}{\end{itemize}}
\newcommand{\benum}{\begin{enumerate}}
\newcommand{\eenum}{\end{enumerate}}
\newcommand{\beq}{\begin{equation}}
\newcommand{\eeq}{\end{equation}}
\newcommand{\beqs}{\begin{equation*}}
\newcommand{\eeqs}{\end{equation*}}

\newtheorem{thms}{Theorem}[section]

\newtheorem{lem}[thms]{Lemma}

\newtheorem{cor}[thms]{Corollary}
\newtheorem{prop}[thms]{Proposition}
\theoremstyle{thms}



\usepackage{geometry} 
\setcounter{tocdepth}{2}

\begin{document}
\title{PCA from noisy, linearly reduced data: the diagonal case}

\author{Edgar Dobriban\footnote{Department of Statistics, Stanford University. E-mail: \texttt{dobriban@stanford.edu}. Supported in part by NSF grant DMS-1407813, and by an HHMI International Student Research Fellowship.},
William Leeb\footnote{Program in Applied and Computational Mathematics, Princeton University.  E-mail: \texttt{wleeb@math.princeton.edu}. Supported by the Simons Collaborations on Algorithms and Geometry.},
and Amit Singer\footnote{Department of Mathematics, and Program in Applied and Computational Mathematics, Princeton University. E-mail: \texttt{amits@math.princeton.edu}. Partially supported by Award Number
R01GM090200 from the NIGMS, FA9550-12-1-0317 from AFOSR, Simons Foundation Investigator Award and Simons Collaborations on Algorithms and Geometry, and the Moore Foundation Data-Driven Discovery Investigator Award.}}
\date{}
\maketitle

\begin{abstract}
Suppose we observe data of the form $Y_i = D_i (S_i + \varepsilon_i)   \in \mathbb{R}^p$ or $Y_i = D_i S_i + \varepsilon_i   \in \mathbb{R}^p$, $i=1,\ldots,n$, where $D_i  \in \mathbb{R}^{p\times p}$ are known diagonal matrices, $\ep_i$ are noise, and we wish to perform principal component analysis (PCA) on the unobserved signals $S_i \in \mathbb{R}^p$. The first model arises in missing data problems, where the $D_i$ are binary. The second model captures noisy  deconvolution problems, where the $D_i$ are the Fourier transforms of the convolution kernels. It is often reasonable to assume the $S_i$ lie on an unknown low-dimensional linear space; however, because many coordinates can be suppressed by the $D_i$, this low-dimensional structure can be obscured.

We introduce \emph{diagonally reduced spiked covariance models} to capture this setting. We characterize the behavior of the singular vectors and singular values of the data matrix under high-dimensional asymptotics where $n,p\to\infty$ such that $p/n\to\gamma>0$. Our results have the most general assumptions to date \emph{even without diagonal reduction}.  Using them, we develop optimal eigenvalue shrinkage methods for covariance matrix estimation and optimal singular value shrinkage methods for data denoising.

Finally, we characterize the error rates of the empirical Best Linear Predictor (EBLP) denoisers. We show that, perhaps surprisingly, their optimal tuning depends on whether we denoise in-sample or out-of-sample, but the optimally tuned mean squared error is the same in the two cases. 

\end{abstract}
 \section{Introduction}

 Principal component analysis (PCA) is a classical statistical method that decomposes a collection of datapoints $s_1,\dots,s_n \in \mathbb{R}^p$ as a linear combination of vectors that account for the most variability \citep[e.g.,][]{jolliffe2002principal,anderson1958introduction}. More formally, if $s_1,\dots,s_n$ are drawn from a probability distribution with mean zero and covariance matrix $\Sigma_S$, then the principal components (PCs) of the distribution are the eigenvectors $u_1,\dots, u_p$ of $\Sigma_S$. 
Typically, we approximate the distribution by projecting it onto the PCs with the largest eigenvalues.

 A more specific model arising in many applications is the \emph{spiked covariance model} \citep{johnstone2001distribution}. First, the signal $S_i$ is a linear combination of $r$ fixed but unobserved orthonormal PCs $u_k$: 
        \begin{align}
        \label{eq:S_i}
        S_i = \sum_{k=1}^r \ell_k^{1/2} z_{ik} u_k,
        \end{align}
 where $r$ is a fixed parameter (independent of $n$ and $p$) and $z_{ik}$ are iid standardized random variables. Here $\ell_k$ are the eigenvalues, or equivalently the variances along the PCs $u_k$.  Second, the observations are $X_i = S_i + \ep_i$, where $\ep_i$ is noise with iid standardized entries. The spiked covariance model has been widely studied in probability and statistics \cite[e.g.,][etc]{baik2005phase, baik2006eigenvalues, paul2007asymptotics, benaych2012singular}; see also \cite{paul2014random,yao2015large}. 

 This paper considers the setting when the vectors $S_i$ are not only corrupted by noise, but also \textit{linearly reduced}. This means that for given matrices $D_i \in \mathbb{R}^{q_i \times p}$, we observe either
        \begin{align}
        \label{po_def}
        Y_i = D_i X_i = D_i S_i + D_i \ep_i.
        \end{align}
 or
        \begin{align}
        \label{po_def2}
        Y_i = D_i S_i + \ep_i.
        \end{align}
 We think of the \textit{reduction matrix} $D_i$ as either a projection matrix or a linear filter reducing the information that we observe. In general, it will not be possible to reconstruct a vector $v$ from $D_i v$.

 We refer to model $\eqref{po_def}$ as the \textit{reduced-noise model}, and to model \eqref{po_def2} as the \textit{unreduced-noise model}. In the reduced-noise model, both the signal and noise are reduced, while in the unreduced-noise case, only the signal is. These models generalize the spiked covariance model, and arise naturally in several settings. For instance:

 \begin{enumerate}
 \item
 {\bf Missing data:} For diagonal matrices $D_i$ with zeros or ones the reduced-noise model from \eqref{po_def} corresponds to missing data problems widely encountered in statistics \cite[e.g.,][]{schafer1997analysis,little2014statistical}. For random $D_i$ independent of other variables, we are under the assumption of missing completely at random (MCAR).

 \item
 {\bf Deconvolution and image restoration:} In image processing, an image might be corrupted by ``blurring''---convolution with a linear filter---followed by noise.  After taking the Fourier transform, this can be modeled as a coordinate-wise multiplication by a diagonal matrix $D_i$, followed by adding noise. This corresponds to the unreduced-noise model from \eqref{po_def2}.  
For example, the image formation model in cryo-electron microscopy (cryo-EM) under the linear, weak phase approximation leads to such a model \citep{frank1996three}. A closely related model was recently used by  \cite{bhamre2016denoising}, where $S_i$ are Fourier transforms of projection images of molecules, and $D_i$ are contrast transfer functions.

 \item
 {\bf Structural variability in cryo-EM:} In cryo-EM, $S_i$ is the three-dimensional structure of a molecule, and $D_i$ is a tomographic projection of this volume onto a randomly selected plane. Since the molecule typically has only a few degrees of freedom, such as different conformations or states, it is reasonable to model $S_i$ to lie on some unknown low-dimensional space \citep[e.g.,][]{katsevich2015covariance,anden2015covariance}. This corresponds to the unreduced-noise model from \eqref{po_def2}: the data $Y_i = D_i S_i+\ep_i$ are tomographic projections $D_i S_i$ with added noise $\ep_i$.

\item
 {\bf Signal acquisition and compressed sensing:} In some signal acquisition tasks such as hyperspectral imaging, due to resource constraints it is convenient to acquire reduced or compressed measurements of signals. The reductions are often taken to be random projections. It is of interest to reconstruct the PCs of the original measurements \citep[e.g.,][]{chang2003hyperspectral, fowler2009compressive}. This falls under the reduced-noise model from \eqref{po_def}.
 \end{enumerate}

 There may certainly be many other applications fitting this framework. In the above examples it is natural to posit that the distribution of the signal vectors $S_i$ is of low effective dimensionality. In this paper we will assume that the distribution lies on some unknown linear space of small dimension $r$, as in equation \eqref{eq:S_i}. Given observations of the form \eqref{po_def} or \eqref{po_def2}, we address several natural statistical questions:
 \begin{enumerate}
 \item {\bf Covariance estimation:}  How to estimate the covariance matrix of the signals $S_i$? This is both a fundamental statistical problem, and has numerous applications, including classification and denoising.

 \item {\bf PCA:} How should we estimate the principal components of $S_i$? This question is of special interest due to the importance of PCA for exploratory data analysis and visualization.

 \item {\bf Denoising:} How can we denoise---or predict---the individual signal vectors $S_i$? This is a central question both in the missing data problems, where it corresponds to imputation, as well as in the image processing problems, where it amounts to noise reduction.
 \end{enumerate}

 In this paper, we develop new methods for a special class of models, where the matrices $D_i \in \mathbb{R}^{p \times p}$ are \emph{diagonal}. We will call the observations $Y_i$ from \eqref{po_def} or \eqref{po_def2} \textit{diagonally reduced}. The missing data and deconvolution problems belong to this class. In the high-dimensional asymptotic regime where $p,n$ both grow to infinity and $p/n \rightarrow \gamma>0$, we develop methods that provide clear, quantitative answers to all questions posed above, under quite weak assumptions. 


Related work by \cite{katsevich2015covariance} and \cite{anden2015covariance} develops methods for covariance estimation when the $D_i$'s are projection matrices mapping a 3-D electron density to its integral on a randomly chosen plane. In this data acquisition model for cryo-EM, the authors propose consistent estimators of the covariance of the electron density. However, their observation models are different from our diagonally reduced models. In \cite{bhamre2016denoising}, the questions of covariance estimation and denoising are studied empirically when the $D_i$'s come from the contrast transfer function of a microscope and the $S_i$'s  are Fourier transforms of clean tomographic projections.

 \cite{nadakuditi2014optshrink} develops methods for low-rank matrix estimation with missing data. Our results are more general, and also include methods for \emph{covariance estimation} and  \emph{denoising individual datapoints}, see Sec.\ \ref{proba_rel_work} for more details. \cite{lounici2014high} develops eigenvalue soft thresholding methods for covariance estimation with missing data. In our somewhat more specialized models, we instead find the \emph{optimal eigenvalue shrinkers}, and they are different from soft thresholding. \cite{cai2016minimax} develop minimax rate-optimal covariance matrix estimators for missing data, focusing on bandable and sparse models. We instead focus on the spiked covariance model. 

 We next give a brief overview of our results.

 \subsection{Probabilistic Results}
 A lot of work in random matrix theory studies the asymptotic spectral theory of the spiked covariance model and its variants, see e.g., \cite{paul2014random,yao2015large}. In Sec.\ \ref{proba}, we introduce two general \emph{diagonally reduced} spiked covariance models corresponding to \eqref{po_def} and \eqref{po_def2}. We characterize the limiting eigenvalues of the data matrix $Y$ (with rows $Y_i^\top$), and the limiting angles of its singular vectors with the population singular vectors (of the matrix $S$ with rows $S_i^\top$).

More specifically, we show in Thm.\ \ref{spike_proj_multi} that the eigenvalue distribution of $n^{-1}Y^\top Y$ converges to a general Marchenko-Pastur distribution \citep{marchenko1967distribution}, while the top few eigenvalues have well-defined almost sure limits. This mirrors the behavior known in unreduced spiked models, \cite[e.g.,][]{baik2005phase, baik2006eigenvalues, benaych2012singular};  however, our assumptions in Sec.\ \ref{proba} are very general, and in fact lead to the \emph{most general results to date even in the unreduced case} when $D_i  = I_p$ (see Cor. \ref{standard_spike_gen} and Sec.\ \ref{proba_rel_work} for discussion). 

In the special case where the entries of $D_i$ are iid, the limiting spectrum and the angles between the population and empirical singular vectors are given by explicit formulas related to the standard Marchenko-Pastur law, as described in Cor.\ \ref{standard_spike_proj}. For general $D_i$, we can compute numerically the quantities specified by Thm.\ \ref{spike_proj_multi} with the {\sc Spectrode} method \citep{dobriban2015efficient}; see Sec.\ \ref{proj_simu}.  All computational results of this paper are reproducible with software publicly available at \url{github.com/dobriban/diagonally_reduced/}.


 \subsection{Covariance estimation}
 In Sec.\ \ref{sec:cov_est}, we apply the probabilistic results from Sec.\ \ref{proba} to develop methods for covariance estimation from diagonally reduced data, under the additional assumption that the diagonal entries of the reduction matrices $D_i$ are iid. The prototypical example is the missing data problem with uniform missingness, where in the reduced-noise model \eqref{po_def} the entries are Bernoulli($\delta$).

Already for unreduced data from the spiked covariance model, the sample covariance matrix is a poor estimator of the population covariance, since neither the empirical PCs nor the empirical spectrum converge to their population counterparts. Though little can be done about correcting the PCs, one can develop optimal shrinkage estimators of the eigenvalues. In the unreduced case, \cite{donoho2013optimal} consider estimators of the form $U \eta(\Lambda) U^\top$, where $U$ is the orthogonal matrix of PCs, $\Lambda$ is the matrix of eigenvalues of the sample covariance, $\eta:\mathbb{R}\to\mathbb{R}$ is a shrinkage function, and $\eta(\Lambda)$ replaces every diagonal element $\lambda$ of $\Lambda$ by $\eta(\lambda)$.

 In Sec.\ \ref{sec:cov_est}, we take a similar approach to the problem of covariance estimation in reduced models. We first define an unbiased estimator of the population covariance of the unreduced signals (Eq.\ \ref{sig_hat}). Building on the results of Sec.\ \ref{proba}, we describe the asymptotic spectral theory of this estimator, and finally derive shrinkers of the spectrum that are asymptotically optimal for certain loss functions. 
 We explicitly derive the optimal shrinkers in the case of operator norm loss (Eq.\ \eqref{eta_op} for reduced-noise and Eq.\ \eqref{eta_op_unre} for unreduced-noise) and Frobenius norm loss (Eq.\ \eqref{eta_fr} for reduced-noise and Eq.\ \eqref{eta_fr_unre} for unreduced-noise). We also derive the asymptotic errors of the optimal shrinkers (Eq.\ \eqref{op_loss} and \eqref{fr_loss}), and give a recipe for deriving the optimal shrinkers for a broad class of loss functions.

 \subsection{Denoising}
In Sec.\ \ref{sec:denoise}, we consider denoising, the task of predicting the signal vectors $S_i$ based on the observations $Y_i$. For this we study the Best Linear Predictor (BLP) well-known from random effects models \citep[e.g.,][]{searle2009variance}. The general form of a BLP is $\hat S_i = \Sigma_S (\Sigma_S + \Sigma_\ep)^{-1}Y_i$, where $\Sigma_\ep$ is the covariance matrix of the noise. This is also known as a ``linear Bayesian'' method \citep{hartigan1969linear}. In other areas such as electrical engineering and signal processing, it is known as the ``Wiener filter'', ``(linear) Minimum Mean Squared Estimator (MMSE)''  \citep[][Ch. 12]{kay1993fundamentals}, ``linear Wiener estimator'' \cite[][p. 538]{mallat2008wavelet}, or ``optimal linear filter'' \citep[][p. 550-551]{mackay2003information}.

The BLP is an ``oracle'' method, because it depends on unknown population parameters. In practice we can use the empirical BLP (EBLP), $\hat \Sigma_S (\hat \Sigma_S + \hat\Sigma_\ep)^{-1}Y_i$, where the unknown parameters are estimated using the data. Due to the inconsistency of PCA in high dimensions, this is suboptimal to the BLP. However, we can find the asymptotically optimal method for estimating the covariance matrix $\Sigma_S$ using the empirical PCs. This estimator holds several surprises. In particular, the optimal EBLP coefficients are different for \emph{in-sample} and \emph{out-of-sample} denoising---but the optimal mean squared error ends up identical! See Thms.\ \ref{den_thm}, \ref{oos-prop} for reduced-noise and Sec.\ \ref{den_po_def2} for unreduced-noise. It also turns out that the formula for in-sample EBLP, applied to all $Y_i$, is identical to optimal singular value shrinkage estimators (Sec.\ \ref{den_sv_shr}).

 This analysis involves characterizing random quantities with an intricate dependence structure, such $ Y_{i}^\top D_i \hat u_{k}$, where $\hat u_{k}$ is the $k$-th PC of the sample covariance matrix of $Y_i$. For this we extend significantly the technique introduced by \cite{benaych2012singular} to study the angles between $u_k$ and $\hat u_k$. We call this approach the \emph{outlier equation} method
 (see Sec.\ \ref{sec:denoise}).

 \section{Probabilistic results}
 \label{proba}
 \subsection{Main probabilistic results}
 This section presents a new result in random matrix theory, which will be the key tool for our work on covariance estimation. Recall that we have diagonally reduced observations $Y_i = D_i (S_i+\ep_i)$ or $Y_i = D_i S_i + \ep_i , \,\, i=1,\ldots,n$, where $S_i \in \mathbb{R}^p$ are unobserved signals and $D_i \in \mathbb{R}^{p \times p}$ are diagonal matrices. The signals have the form $S_i = \sum_{k=1}^r \ell_k^{1/2} z_{ik} u_k$.

 Here $u_k$ are deterministic signal directions with $\|u_k\|=1$. We will assume that $u_k$ are delocalized, so that $|u_{k}|_{\infty} \le C\log(p)^B/p^{1/2}$ for some constants $B,C>0$.  The scalars $z_{ik}$ are standardized independent random variables, specifying the variation in signal strength from sample to sample. For simplicity we assume that the deterministic spike strengths are different and sorted: $\ell_1>\ell_2>\ldots>\ell_r>0$. 
Finally $\ep_i = \Gamma^{1/2} \alpha_i$ is sampling noise, where $\Gamma= \diag(g_1^2,\ldots,g_p^2)$ is diagonal and deterministic, and $\alpha_i=(\alpha_{i1},\ldots,\alpha_{ip})^\top$ has independent standardized entries.

 \begin{table}[]
\centering
\caption{Definitions}
\label{def_tab}
{\renewcommand{\arraystretch}{1.4}
\begin{tabular}{|l|l|l|}
\hline
Name  & Definition & Defined in  \\ \hline
Reduced Observation   & $Y_i = D_i (S_i+\ep_i)$ or $Y_i = D_i S_i+\ep_i$ & \eqref{po_def},\, \eqref{po_def2}     \\ 
Signal & $S_i = \sum_{k=1}^r \ell_k^{1/2} z_{ik} u_k$ & \eqref{eq:S_i} \\ 
Reduction Matrix       & $D_i = \mu + \Sigma^{1/2} E_i$  & \eqref{p_def} \\ 
& $\mu  = \diag(\mu_1,\ldots,\mu_p)$, \, $\Sigma = \diag(\sigma^2_1,\ldots,\sigma^2_p) $ & \\
Noise & $\ep_i = \Gamma^{1/2} \alpha_i$, where $\Gamma= \diag(g_1^2,\ldots,g_p^2)$ &\\ 
Noise Variances & $H_p = p^{-1}\sum_{j=1}^p\delta_{g_j^2 (\mu_j^2 +\sigma_j^2)}$,\ $H_p \Rightarrow H$ &\\ 
& $G_p = p^{-1}\sum_{j=1}^p\delta_{g_j^2}$,\ $G_p \Rightarrow G$ &\\ 
General MP Law & $F_{\gamma,H}$, \, $\underline F_{\gamma,H}(x)  = \gamma F_{\gamma,H}(x)  + (1-\gamma)\delta_0$ &\\ 
Stieltjes Transform & $m_{\gamma,H}(z)  = \int \frac{d F_{\gamma,H}(x)}{x-z} $, \, $\underline m_{\gamma,H}(z)  = \int \frac{d \underline F_{\gamma,H}(x)}{x-z} $ &\\ 
 & $m_{H}(z)  = \int \frac{d H(x)}{x-z} $ &\\ 
D-Transform & $D_{\gamma,H}(x) = x\cdot m_{\gamma,H}(x) \cdot \underline m_{\gamma,H}(x)$&\\ 
Upper Edge & $b_H^2 = \sup \supp(F_{\gamma,H})$ & \\
\hline   
\end{tabular}
}
 \end{table}

 The diagonal matrices $D_i=\diag(D_{i1},\ldots,D_{ip})$ have the form
        \begin{align}
        \label{p_def}
        D_i = \mu + \Sigma^{1/2} E_i,
        \end{align}
where $E_i$ have independent standardized entries. Here $\mu  = \diag(\mu_1,\ldots,\mu_p) \in \mathbb{R}^{p\times p}$ is the deterministic diagonal mean and $\Sigma = \diag(\sigma^2_1,\ldots,\sigma^2_p) \in \mathbb{R}^{p\times p}$ is the deterministic diagonal covariance matrix of the entries of the reduction matrices. Let $H_p$ be the uniform distribution on the $p$ scalars $g_j^2 \cdot \E D_{ij}^2 = g_j^2 \cdot (\mu_j^2 +\sigma_j^2)$, $j=1,\ldots,p$, and let $G_p$ be the analogous object for $g_j^2$, $j=1,\ldots,p$. It turns out that these are the distributions of noise variances relevant for our models.  For dealing with model \eqref{po_def}, we assume that as $p\to \infty$, $H_p$ converges to a compactly supported limit distribution $H$: $H_p \Rightarrow H$. For dealing with model \eqref{po_def2}, we assume that $G_p$ converges to a compactly supported limit distribution $G$: $G_p \Rightarrow G$. 
 
 We will consider the high-dimensional regime where $n,p \to \infty$ such that $p/n\to \gamma>0$. In this setup, our answers will depend on the general Marchenko-Pastur distribution \citep{marchenko1967distribution}.  This law  describes the behavior of empirical eigenvalue distributions of sample covariance matrices: If $N$ is an $n\times p$ matrix with iid standardized entries, and $T$ is a $p\times p$ positive semidefinite matrix with eigenvalue distribution converging to $H$, then the eigenvalue distribution of the $p\times p$ matrix $n^{-1} T^{1/2} N^\top N T^{1/2}$ converges almost surely (a.s.) to the Marchenko-Pastur distribution $F_{\gamma,H}$ \citep[see e.g.,][for a reference]{bai2009spectral}. 

When $T = I_p$ is the identity, $F_{\gamma,H}$ is known as the \emph{standard Marchenko-Pastur distribution}, and has density (if $\gamma \in (0,1)$):
        \begin{align*}
        f_{\gamma}(x) = \frac{\sqrt{(g_{+} - x) (x - g_{-})}}
                           {2 \pi x}
                      I(x\in [g_{-},g_{+}])
        \end{align*}
 where $g_\pm = (1 \pm \sqrt{\gamma})^2$. For general $H$, $F_{\gamma,H}$ does not have a closed form, but it can be studied numerically \citep[see e.g.,][]{dobriban2015efficient}.

 Closely related to $F_{\gamma,H}$ is the distribution $\underline F_{\gamma,H}(x)  = \gamma F_{\gamma,H}(x)  + (1-\gamma)\delta_0$. This is the limit of the eigenvalue distribution of the $n\times n$ matrix $\smash{n^{-1}  N^\top T N}$. We will also need the Stieltjes transform $m_{\gamma,H}$ of $F_{\gamma,H}$, $m_{\gamma,H}(z)  = \int (x-z)^{-1} d F_{\gamma,H}(x)$, and the Stieltjes transform $\underline m_{\gamma,H}$ of $\underline F_{\gamma,H}$.  Based on these, one can define the D-transform of $F_{\gamma,H}$ by 
\[D_{\gamma,H}(x) = x\cdot m_{\gamma,H}(x) \cdot \underline m_{\gamma,H}(x).\] 
Up to the change of variables $x = y^2$, this agrees with the D-transform defined in \cite{benaych2012singular}.  Let $b_H^2$ be the supremum of the support of $F_{\gamma,H}$, and $D_{\gamma,H}(b_H^2) = \lim_{t\downarrow b} D_{\gamma,H}(t^2)$. It is easy to see that this limit is well defined, and is either finite or $+\infty$. Let us denote the support of a distribution $H$ on $\R$ by $\supp(H)$.

 Denote the normalized data matrix $\tilde Y = n^{-1/2}Y$, with the $n\times p$ matrix $Y$ having rows $Y_i^\top$.  Our main probability result, proved later in Sec.\ \ref{main_pf_proba}, is the following.

\begin{thms}[Diagonally reduced spiked models]
\label{spike_proj_multi}
Consider the observation models \eqref{po_def} and \eqref{po_def2}, under the above assumptions. Suppose that  
\benum 
\item $\E{\alpha_{ij}^4}<C$, $\smash{\E E_{ij}^4<C}$, and $\smash{\E |z_i|^{4+\phi}<C}$ for some $\phi>0$ and $C<\infty$. 
\item Under model \eqref{po_def}, $\sup \supp(H_p) \to \sup \supp(H)$. Under model \eqref{po_def2}, $\sup \supp(G_p) \to \sup \supp(G)$.
\item The squared norms $\|\mu  u_k\|^2$ converge to $\tau_k >0$. 
\item Under model \eqref{po_def}, $\mu u_k$ are \emph{generic} with respect to $M=\Gamma(\Sigma+\mu^2)$ in the sense that 
\[u_j^\top \mu (M -zI_p)^{-1} \mu u_k \to I(j=k) \cdot \tau_k \cdot m_H(z)\] 
for all $z\in \mathbb{C}^+$, where $m_H$ is the Stieltjes transform of $H$. 

Under model \eqref{po_def2}, $\mu u_k$ are generic with respect to $M=\Gamma$, i.e., $u_j^\top \mu (M -zI_p)^{-1} \mu u_k \to I(j=k) \cdot \tau_k \cdot m_G(z)$. 
\eenum

Then 
\benum
\item Under model \eqref{po_def}, the eigenvalue distribution of $\tilde Y^\top \tilde Y$ converges to the general Marchenko-Pastur law $F_{\gamma,H}$ a.s. In addition, the $k$-th largest singular value of $\tilde Y$ converges, $\sigma_k(\tilde Y) \to t_k>0$ a.s., where
\begin{equation}
\label{sv_eq}
t_k^2=
\left\{
	\begin{array}{ll}
		D_{\gamma,H}^{-1}(\frac{1}{\tau_k\ell_k}) & \mbox{\, if \, }  \ell_k>1/[\tau_k D_{\gamma,H}(b_H^2)], \\
		b_H^2 & \mbox{\, otherwise.}
	\end{array}
\right.
\end{equation}
Moreover,  let $\nu_j=\mu u_j/\|\mu u_j\|$ be the \emph{normalized reduced signals} and let $\hat u_k$ be the right singular vector of $\tilde Y$ corresponding to $\sigma_k(\tilde Y)$. Then $(\nu_j^\top \hat u_k)^2 \to c_{jk}^2$ a.s., where
\begin{equation*}
c_{jk}^2=
\left\{
	\begin{array}{ll}
		\frac{m_{\gamma,H}(t_k^2)}{D_{\gamma,H}'(t_k^2)\tau_k \ell_k} & \mbox{\, if \, } j=k \mbox{\, and \, }  \ell_k>1/[\tau_k D_{\gamma,H}(b_H^2)], \\
		0 &  \mbox{\, otherwise.}
	\end{array}
\right.
\end{equation*}
\item Under model \eqref{po_def2}, the analogous results hold with $G$ replacing $H$ everywhere.

\eenum
\end{thms}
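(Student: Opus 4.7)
The plan is to rewrite $\tilde Y$ as a fixed-rank additive perturbation of a pure-noise matrix whose bulk spectrum converges to a general Marchenko-Pastur law, and then apply the outlier-equation framework of \cite{benaych2012singular} to recover both the outlier singular values and their associated squared cosines. Using the decomposition $D_i = \mu + \Sigma^{1/2} E_i$, under model \eqref{po_def} I would split $Y_i = \mu S_i + [(D_i - \mu) S_i + D_i \ep_i]$, giving $\tilde Y = P + \tilde Y_0$ with
\beqs
P = n^{-1/2}\sum_{k=1}^{r} \ell_k^{1/2} \|\mu u_k\|\, z_{\cdot k}\, \nu_k^\top, \qquad \nu_k = \mu u_k/\|\mu u_k\|,
\eeqs
a rank-$r$ perturbation whose effective squared spike strengths $\ell_k \|\mu u_k\|^2$ tend to $\ell_k \tau_k$, and with $\tilde Y_0$ having entries independent across $(i,j)$ and per-entry variance $(\mu_j^2 + \sigma_j^2) g_j^2 + \sigma_j^2 \sum_k \ell_k u_{kj}^2$. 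Under model \eqref{po_def2} the analogous split produces a $\tilde Y_0$ whose per-entry variance is $g_j^2 + \sigma_j^2 \sum_k \ell_k u_{kj}^2$.

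For the bulk, delocalization gives $\sum_k \ell_k u_{kj}^2 = O(\log(p)^{2B}/p)$ uniformly in $j$, so the ``signal-noise'' piece of $\tilde Y_0$ has operator norm $O(p^{-1/2}\log(p)^B) = o(1)$ after normalization and is spectrally negligible. The remaining matrix has independent entries with variance profile converging to $H$ (resp.\ $G$), and a general Marchenko-Pastur theorem via the Bai-Silverstein Stieltjes-transform machinery---available under the bounded fourth moments on $E_{ij}, \alpha_{ij}$ and the edge convergence $\sup\supp(H_p) \to \sup\supp(H)$---yields a.s.\ convergence of the empirical spectrum of $\tilde Y_0^\top \tilde Y_0$ to $F_{\gamma,H}$ (resp.\ $F_{\gamma,G}$), together with the pinning of the largest bulk eigenvalue at $b_H^2$.

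For the outliers, I apply the outlier equation from \cite{benaych2012singular}: once $\tilde Y_0$ has bulk $F_{\gamma,H}$ with upper edge $b_H^2$, each outlier singular value $t_k$ above the phase-transition threshold $1/[\tau_k D_{\gamma,H}(b_H^2)]$ solves the scalar equation $D_{\gamma,H}(t_k^2) = 1/(\ell_k \tau_k)$, and below the threshold the $k$-th singular value sticks to $b_H$; inverting gives \eqref{sv_eq}. The squared-cosine formula follows by differentiating the outlier equation at $z = t_k^2$: the on-diagonal ($j = k$) case produces the ratio $m_{\gamma,H}(t_k^2)/[\tau_k \ell_k D_{\gamma,H}'(t_k^2)]$, while off-diagonal terms vanish because assumption (4) forces the limiting bilinear forms $\nu_j^\top(\tilde Y_0^\top \tilde Y_0 - zI_p)^{-1}\nu_k$ to be $\delta_{jk}$-diagonal.

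The main obstacle is the bilinear-form analysis: one must upgrade the deterministic identity $u_j^\top \mu(M - zI_p)^{-1}\mu u_k \to \delta_{jk} \tau_k m_H(z)$ of assumption (4), which sees only the variance profile $M = \Gamma(\Sigma + \mu^2)$, to the corresponding random identity for the resolvent of $\tilde Y_0^\top \tilde Y_0$. This requires a deterministic-equivalent argument for resolvents of matrices with a variance profile coupled with quadratic-form concentration; the $4 + \phi$ moment slack on the $z_{ik}$ is what supplies the Borel-Cantelli input needed to promote in-probability convergence to almost-sure. A secondary subtlety is that $\tilde Y_0$ depends weakly on the $z_{ik}$ through the signal-noise term, violating the strict independence that BGN-type results usually demand; because that term is $o(1)$ in operator norm, however, the required independence is restored up to a vanishing perturbation that is absorbed by standard Weyl-type inequalities.
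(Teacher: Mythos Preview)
Your approach matches the paper's: the same signal-plus-noise decomposition $\tilde Y = P + \tilde Y_0$, the same Benaych-Georges--Nadakuditi outlier-equation framework for the spikes and cosines, and the same deterministic-equivalent argument (\`a la \cite{bai2007asymptotics}, with the imaginary part of $z$ taken to zero) to pass from the deterministic resolvent limit in assumption~(4) to the random-resolvent bilinear forms $\nu_j^\top(t^2 I_p - \tilde Y_0^\top \tilde Y_0)^{-1}\nu_k$. One correction worth flagging: your operator-norm bound $O(p^{-1/2}\log(p)^B)$ on the signal-noise cross term is too optimistic---that matrix has entries proportional to $E_{ij} z_i u_{kj}$, so the $z_i$ factor contributes $\max_i|z_i|$ to any bound, and it is precisely here (not only in the bilinear-form Borel--Cantelli step) that the $4+\phi$ moment on $z$ is essential, yielding $\max_i|z_i| = O(n^{1/2-\phi'})$ a.s.\ so that the product is still $o(1)$.
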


Assumption 4 needs explanation. This assumption generalizes the existing conditions for spiked models. In particular, it is easy to see that it holds when the vectors $u_k$ are random with independent coordinates.  Specifically, let $x,y$ are two independent random vectors with iid zero-mean entries with variance $1/p$. Then $\E x^\top \mu (M - zI_p)^{-1} \mu x =p^{-1}\tr  \mu (M - zI_p)^{-1}\mu$. Assumption 4 requires that this converges to  $\tau \cdot m_H(z)$, which happens for instance when the vector $\mu$ itself has random independent coordinates with variance $\tau/p$, or when it equals a multiple of the identity. However, Assumption 4 is more general, as it does not require any kind of randomness in $u_k$. 

Thm \ref{spike_proj_multi} gives the limiting angles of the empirical eigenvectors $\hat u_k$ with the \emph{reduced} population eigenvectors $\nu_j=\mu u_j/\|\mu u_j\|$. These are in general different from the true eigenvectors $u_j$. However, in our main application (Cor.\ \ref{standard_spike_proj} and the following sections) they are the same, because $\mu$ is a multiple of identity.

One can gain some insight into the result in the simpler case where the noise is uncolored, so that $\Gamma = I_p$. In that case, before reduction we have a spike strength $\ell$ and an average noise level of unity. After reduction under model \eqref{po_def}, we have a spike strength $\ell \|\mu u \|^2$, and an average noise level $p^{-1}\sum_j \E D_{ij}^2 = p^{-1}(\|\mu\|^2 + \|\sigma\|^2)$, where $\sigma = (\sigma_1,\dots,\sigma_p)$. For a delocalized $u$, we expect $\|\mu u \|^2\approx p^{-1}\|\mu\|^2 $. Therefore, \emph{reduction in model \eqref{po_def} typically decreases the signal strength} by a factor of
$$\frac{\|\mu\|^2}{\|\mu\|^2 + \|\sigma\|^2}.$$
However, the spike strength after reduction---$\ell \|\mu u \|^2$---depends on the correlation between $\mu$ and $u$. In particular, the reduction can vary among the different  PCs. In contrast it is easy to see in a similar way that reduction in model \eqref{po_def2} may increase or decrease the signal strength.

The key strength of Thm.\ \ref{spike_proj_multi} is its generality. Specifically, there is essentially only one previous result on reduced spiked models, appearing in \cite{nadakuditi2014optshrink}. However, that only studies iid Bernoulli projections under restrictive conditions on the noise, whereas we allow for (1) a general diagonal covariance structure $\Sigma$ in the reduction matrices, as well as (2) a general diagonal noise structure $\Gamma$, and (3) more general moment conditions. Moreover, even in unreduced spiked models, our results are already the most general results to date (see Sec.\ \ref{proba_rel_work}).

We think that the generality is important for several reasons: first, for practical reasons it is good to have results that require as few assumptions as possible, especially unverifiable conditions like ``randomness'' in $u_k$ or ``orthogonal invariance'' of the noise. As a consequence of these general results, existing tools like singular value shrinkage are shown to apply more generally, so this is a direct improvement. Second, from a theoretical perspective it is good to understand the reason for the ``spiking'' behavior; our results clarify for instance that ``orthogonal invariance'' of the noise is not needed.

\subsubsection{Comments on the proof}

The broad outline of the proof is inspired by the argument presented in \cite{benaych2012singular} for unreduced spiked models. However, there are several new steps. First, the proof in \cite{benaych2012singular} concerns only the unreduced case, and the dependence introduced by the random reduction matrices $D_i$ is a new challenge. The observations in model \eqref{po_def} are $D_iX_i = D_iS_i + D_i\ep_i$, so the ``signal'' $D_iS_i$ and ``noise'' $D_i\ep_i$ are dependent. However, we show that the dependence is asymptotically negligible.  For non-diagonal reduction matrices $D_i$, the depencence may be asymptotically non-negligible; this explains why we currently need the diagonal assumption.

As a second novelty, the proof involves finding the limits of certain quadratic forms $u_k^\top R(z) u_j$, where $R(z)$ is a specific resolvent matrix with complex argument $z$. Since the $u_k$ are deterministic, the concentration arguments of \cite{benaych2012singular} are not available. Instead, we adapt the ``deterministic equivalents'' approach of \cite{bai2007asymptotics}. For this we need to take the imaginary part of the complex argument $z$ to zero, which appears to be a new argument in this context. 

\subsection{Reduced standard spiked models}
\label{red_stand_spiked}

We will later use the following corollary for the \emph{reduced standard spiked model} where the reduction coefficients and the noise entries are iid random variables. Suppose that the noise variances are equal to unity, so $\Gamma  = I_p$ and thus $\ep_i$ have independent standardized entries. Moreover assume that the reduction matrices $D_i$ have iid random diagonal entries $D_{ij}$ with mean $\mu = \E D_{ij}$ and variance $\sigma^2 = \V [D_{ij}]$. Note that previously $\mu$ was a matrix, but from now on it will be a scalar, and there will be no possibility for confusion. Let $m = \mu^2+\sigma^2$ and $\delta= \mu^2/m$. In this case it is easy to see that the reduced eigenvectors are the same as the unreduced ones, i.e., $\nu_k = u_k$. Moreover, Assumption 4 from Thm.\ \ref{spike_proj_multi} reduces to $u_k^\top u_l \to 0$ as $n\to \infty$, if $k\neq l$. 

Our answers can be expressed in terms of the well-known characteristics of the standard spiked model. The asymptotic location of the top singular values will depend on \emph{the spike forward map} \cite[e.g.,][]{baik2005phase}: 
\begin{equation*}
\lambda(\ell)=
\lambda(\ell;\gamma)=
\left\{
	\begin{array}{ll}
		(1+\ell) \left(1+ \frac{\gamma}{\ell}\right) & \mbox{\, if \, } \ell>\gamma^{1/2}, \\
		(1+\gamma^{1/2})^2 & \mbox{\, otherwise.}
	\end{array}
\right.
\end{equation*}
The asymptotic angle between singular vectors will depend on  \emph{the cosine forward map} $c(\ell;\gamma) \ge 0$ given by \cite[e.g.,][etc]{paul2007asymptotics, benaych2011eigenvalues}:
\begin{equation}
\label{cos_map}
c(\ell)^2=
c(\ell;\gamma)^2=
\left\{
	\begin{array}{ll}
		\frac{1-\gamma/\ell^2}{1+\gamma/\ell} & \mbox{\, if \, } \ell>\gamma^{1/2}, \\
		0 &  \mbox{\, otherwise.}
	\end{array}
\right.
\end{equation}

\begin{cor}[Reduced standard spiked models]
\label{standard_spike_proj}
Under observation model \eqref{po_def}, in the above setting, the eigenvalue distribution of $m^{-1}\tilde Y^\top \tilde Y$ converges to the standard Marchenko-Pastur law with aspect ratio $\gamma$, a.s.  Moreover, $m^{-1/2}\sigma_k(\tilde Y) \to t_k>0$ a.s., where
\begin{equation}
\label{spike_eq_white}
t_k^2 = \lambda(\delta \ell_k)
\end{equation}
Finally, let $\hat u_k$ be the right singular vector of $\tilde Y$ corresponding to $\sigma_k(\tilde Y)$. Then $(u_j^\top \hat u_k)^2 \to c_{jk}^2$ a.s., where
\begin{equation}
\label{cos_eq_white}
c_{jk}^2=
\left\{
	\begin{array}{ll}
		c^2(\delta \ell_k)  & \mbox{\, if \, } j=k \\
		0 &  \mbox{\, otherwise.}
	\end{array}
\right.
\end{equation}
Under observation model \eqref{po_def2} in the above setting, we have $\sigma_k(\tilde Y)^2 \to t_k^2 = \lambda(\mu^2\ell)$, while $(u_j^\top \hat u_k)^2 \to c_{jk}^2$, where $c_{jk}^2 = c^2(\mu^2 \ell_k)$ if $j=k$ and 0 otherwise.
\end{cor}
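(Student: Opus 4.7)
The plan is to deduce Cor.~\ref{standard_spike_proj} as a direct specialization of Thm.~\ref{spike_proj_multi}. Two steps are involved: verifying the hypotheses of the theorem in the iid reduction setting, and rewriting the general limits in terms of the standard spike forward map $\lambda(\cdot;\gamma)$ and cosine map $c(\cdot;\gamma)$.

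For the hypotheses under model \eqref{po_def}, since $g_j=1$ and $\mu_j^2+\sigma_j^2=m$ for every $j$, the empirical measure $H_p$ equals $\delta_m$ exactly, so $H=\delta_m$ and the support-convergence assumption is trivial; under model \eqref{po_def2} the same reasoning gives $G=\delta_1$. The moment conditions on $\alpha_{ij}, E_{ij}, z_i$ are inherited from the corollary's setup. Because $\mu$ reduces to a scalar multiple of the identity matrix, the reduced population eigenvectors satisfy $\nu_k = u_k$ and $\tau_k = \|\mu u_k\|^2 = \mu^2$ exactly. Moreover, since both $\mu I_p$ and $M\in\{m I_p, I_p\}$ are scalar multiples of the identity, the left-hand side of Assumption 4 collapses to $\mu^2(m-z)^{-1}\, u_j^\top u_k$ (respectively $\mu^2(1-z)^{-1}\, u_j^\top u_k$ under model~\eqref{po_def2}), which matches the prescribed limit iff $u_j^\top u_k \to I(j=k)$, and this holds exactly by orthonormality of the $u_k$.

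With $H=\delta_m$, the law $F_{\gamma,H}$ is the standard Marchenko--Pastur distribution scaled by $m$, yielding $m_{\gamma,H}(z) = m^{-1} m_\gamma(z/m)$, $\underline m_{\gamma,H}(z) = m^{-1}\underline m_\gamma(z/m)$, $D_{\gamma,H}(x) = m^{-1} D_\gamma(x/m)$, and $b_H^2 = m(1+\sqrt\gamma)^2$. Using the standard-model identity $D_\gamma((1+\sqrt\gamma)^2) = 1/\sqrt\gamma$ (equivalent to the threshold $\sqrt\gamma$ of the unreduced spike model), the detection threshold $\ell_k > 1/[\tau_k D_{\gamma,H}(b_H^2)]$ becomes $\delta\ell_k > \sqrt\gamma$. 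Inverting the D-transform gives $t_k^2 = D_{\gamma,H}^{-1}(1/(\mu^2\ell_k)) = m\, D_\gamma^{-1}(1/(\delta\ell_k)) = m\cdot \lambda(\delta\ell_k;\gamma)$, while below threshold $t_k^2 = b_H^2 = m\cdot\lambda(\delta\ell_k;\gamma)$ as well; after rescaling by $m^{-1/2}$ this gives \eqref{spike_eq_white}. The eigenvalue-distribution claim follows identically: the spectrum of $\tilde Y^\top\tilde Y$ converges to $F_{\gamma,\delta_m}$, so the spectrum of $m^{-1}\tilde Y^\top\tilde Y$ converges to the standard MP law.

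For the cosine, substituting the scaled transforms into $c_{kk}^2 = m_{\gamma,H}(t_k^2)/[D_{\gamma,H}'(t_k^2)\,\tau_k\ell_k]$ causes the $m$-factors to cancel, reducing the expression to $m_\gamma(\lambda(\delta\ell_k))/[\delta\ell_k\cdot D_\gamma'(\lambda(\delta\ell_k))]$, which is the standard-model representation of $c^2(\delta\ell_k;\gamma)$; the off-diagonal vanishing $c_{jk}^2=0$ for $j\neq k$ transfers verbatim from Thm.~\ref{spike_proj_multi}. The unreduced-noise case follows identically with the substitutions $m\mapsto 1$, $\delta\ell_k\mapsto \mu^2\ell_k$, using $G=\delta_1$ so that no scaling of transforms is needed. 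The only obstacle is careful bookkeeping of the scaling constants between $F_{\gamma,\delta_m}$ and $F_\gamma$; once this is handled, the corollary follows mechanically.
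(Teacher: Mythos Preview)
Your proposal is correct and follows essentially the same strategy as the paper: both recognize the corollary as a direct specialization of Thm.~\ref{spike_proj_multi} and then check that the general limits collapse to the explicit spike and cosine forward maps. The only stylistic difference is that you verify the formulas via scaling identities for $F_{\gamma,\delta_m}$ (reducing $D_{\gamma,H}$, $m_{\gamma,H}$, etc.\ to their standard-MP counterparts evaluated at $x/m$), whereas the paper instead solves the spike equation $t^2 m_0(t^2)\underline m_0(t^2)=1/(\delta\ell)$ directly by combining it with the quadratic Stieltjes-transform equation for standard MP; both routes are short and yield the same result.
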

For the proof, see Sec.\ \ref{pf_standard_spike_proj}. While in the current paper we  only use this corollary of Thm.\ \ref{spike_proj_multi}, the proof in the special case is essentially as involved as in the general case. For this reason, and for potential future applications, we prefer to state Thm.\ \ref{spike_proj_multi} as well.

In this special case, reduction in model \eqref{po_def} lowers the spike strength from $\ell$ to $\delta\ell$. This result is related to Thm 2.4 of \cite{nadakuditi2014optshrink} on missing data, but we have the following advantages: (1) our result works under a 4-th order moment condition instead of requiring all bounded moments; (2) our result admits arbitrary diagonal reductions, not just missing data (see Sec.\ \ref{proba_rel_work} for details). 

Finally, when there is no reduction, i.e., when $D_i = I_p$ for all $i$, then it turns out we do not need the delocalization of $u_k$. Indeed that is only needed to show that the diagonal reductions introduce a negligible amount of dependence, but we do not need this when $D_i=I_p$. Hence, we can state the following corollary for unreduced spiked models:

\begin{cor}[Standard spiked models]
\label{standard_spike_gen}
Suppose we observe unreduced signals $Y_i = S_i + \ep_i$, and we do \emph{not} assume the delocalization of the PCs $u_k$. Suppose that the other assumptions of Cor. \ref{standard_spike_proj} hold: $\smash{S_i = \sum_{k=1}^r \ell_k^{1/2} z_{ik} u_k}$, where $u_k^\top u_j \to \delta_{kj}$, while $z_{ik}$, $\ep_{ij}$ are iid standardized with $\smash{\E |z_i|^{4+\phi}<C}$, $\E{\ep_{ij}^4}<C$ for some $\phi>0$ and $C<\infty$. 

Then the conclusions of Cor. \ref{standard_spike_proj} hold. Specifically, the spectrum of $\smash{n^{-1} Y^\top Y}$ converges to a standard MP law, its spikes converge a.s. to $\smash{\lambda(\ell_k)}$ and the squared cosines between population and sample eigenvectors converge a.s. to $\smash{c^2(\ell_k)}$.
\end{cor}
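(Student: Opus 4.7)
The plan is to specialize the proof of Theorem~\ref{spike_proj_multi} to the unreduced case $D_i = I_p$ (equivalently $\mu=1$, $\sigma=0$, $\Gamma=I_p$) and then verify that the delocalization hypothesis on the $u_k$ is used nowhere in this specialization. The bulk convergence of $n^{-1} Y^\top Y$ to the standard Marchenko--Pastur law is classical: the noise Gram matrix $n^{-1} E^\top E$, with $E$ having iid standardized entries with bounded fourth moment, converges a.s.\ in spectrum to the standard MP law, and the signal contribution is of fixed rank $r$ and so does not perturb the bulk.

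For the outlier singular values and eigenvectors I would follow the outlier-equation strategy underlying the proof of Theorem~\ref{spike_proj_multi}. Writing $\tilde Y = \tilde E + \tilde S$, where $\tilde E = n^{-1/2} E$ and $\tilde S = n^{-1/2} \sum_{k=1}^r \ell_k^{1/2} z_k u_k^\top$ has rank $r$, an outlier singular value $t > 1+\gamma^{1/2}$ of $\tilde Y$ is characterized by a determinantal equation whose entries reduce to bilinear forms
\[
M_{jk}(z) \;=\; u_j^\top (\tilde E^\top \tilde E - z I_p)^{-1} u_k,
\]
together with companion quantities involving $\tilde E \tilde E^\top$ and inner products of the $z_k$'s. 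The algebraic manipulations that turn the limits of these quantities into the formulas $t_k^2 = \lambda(\ell_k)$ and $c_{jk}^2 = c(\ell_k)^2 \delta_{jk}$ are exactly those in the proof of Corollary~\ref{standard_spike_proj} under the further specialization $\delta=1$, $m=1$.

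The technical heart of the proof is therefore to establish
\[
u_j^\top (\tilde E^\top \tilde E - z I_p)^{-1} u_k \;\to\; \delta_{jk}\, m(z) \quad \text{a.s.}
\]
for each fixed $z$ off the support of the limiting MP law, where $m$ is its Stieltjes transform. In Theorem~\ref{spike_proj_multi} this is obtained via the deterministic-equivalents argument adapted from \cite{bai2007asymptotics}, together with an imaginary-part-to-zero limiting step. Since here the noise still has iid standardized entries with bounded fourth moment, the same argument yields the deterministic equivalent
\[
u_j^\top (\tilde E^\top \tilde E - z I_p)^{-1} u_k - (u_j^\top u_k)\, m_n(z) \;\to\; 0 \quad \text{a.s.},
\]
with $m_n(z) \to m(z)$; combined with the assumed orthogonality $u_j^\top u_k \to \delta_{jk}$, this gives the displayed limit. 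Crucially, this step uses only the convergence $u_j^\top u_k \to \delta_{jk}$ and invokes no hypothesis on $\|u_k\|_\infty$.

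The main obstacle, already overcome in the proof of Theorem~\ref{spike_proj_multi}, is auditing exactly where the delocalization hypothesis is used. A careful reading shows that the bound $|u_k|_\infty \le C \log(p)^B / p^{1/2}$ enters only when neutralizing the asymptotic dependence between $D_i S_i$ and $D_i \epsilon_i$ created by the random diagonal reductions --- for instance, in showing that cross terms involving $u_j^\top \mu(\cdots)\Sigma^{1/2} u_k$ and $u_j^\top \Sigma^{1/2}(\cdots)\Sigma^{1/2} u_k$ concentrate to their deterministic limits using fourth moments of the $E_{ij}$'s. With $D_i = I_p$, the signal $S_i$ and noise $\epsilon_i$ are literally independent, all such cross terms are vacuous, and the delocalization hypothesis can be dropped entirely, leaving only $u_j^\top u_k \to \delta_{jk}$ and the fourth-moment assumptions.
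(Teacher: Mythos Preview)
Your proposal is correct and takes essentially the same approach as the paper. The paper's own justification is the brief remark preceding the corollary: delocalization of the $u_k$ is invoked only in Lemma~\ref{noise_lem} to show that the signal-contaminated piece $E^{**}$ of the noise matrix (arising from $\Sigma^{1/2} E_i X_i$) has vanishing operator norm, and when $D_i=I_p$ one has $\Sigma=0$ so $E^{**}\equiv 0$ and that step is vacuous---exactly the audit you outline.
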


Again, the key strength of this corollary is its generality, specifically that it only has 4-th moment assumptions, not orthogonal invariance.

\subsection{Related work}
\label{proba_rel_work}

There is substantial earlier work on unreduced spiked models, and even in this case our result leads to an improvement. We refer to \cite{paul2014random,yao2015large} for general overviews of the area. 
The paper of \cite{benaych2012singular} is closely related to our approach, and we essentially follow their novel technique, relying on controlling certain bilinear forms. When the signal direction $u$ is fixed, their results require the distribution of the noise matrix to be bi-unitarily invariant, which essentially reduces to Gaussian distributions. Our model is more general since it  only requires a fourth-moment condition on the noise. 

 The technique introduced by \cite{benaych2011eigenvalues,benaych2012singular}  was adapted to non-white Gaussian signal-plus noise matrices $X_i = \ell^{1/2} z_i u + \Gamma^{1/2} \ep_i$ in \cite{chapon2012outliers}. They rely on an integration
by parts formula for functionals of Gaussian vectors and the Poincar\'e-Nash
inequality. A Poincar\'e inequality was also assumed in  \cite{capitaine2013additive}. Our result is stronger, since we only require fourth moment conditions.

 Previous extensions to the setting of missing data are found in \cite{nadakuditi2014optshrink}, which describes the OptShrink method for singular value shrinkage matrix denoising. The method is extended to data missing at random, and in particular the limit spectrum of the data matrix with zeroed-out missing data is found (his Thm.\ 2.4). This is related to Thm.\ \ref{spike_proj_multi}, but we have the following advantages: (1) our result works under the optimal 4-th order moment condition instead of requiring all moments to be bounded; (2) our result admits arbitrary reductions, not just binary projection matrices, (3) we extend to reduction matrices $D_i$ that have non-iid entries, (4) we allow heteroskedastic diagonal noise $\ep_i = \Gamma^{1/2} \alpha_i$; and (5) we also consider the unreduced-noise model from Eq.\ \eqref{po_def2} (whereas \cite{nadakuditi2014optshrink} considers the reduced-noise model from Eq.\ \eqref{po_def}).

\subsection{A numerical study}
\label{proj_simu}

\begin{figure}
\centering
\begin{subfigure}{.5\textwidth}
  \centering
  \includegraphics[scale=0.4]{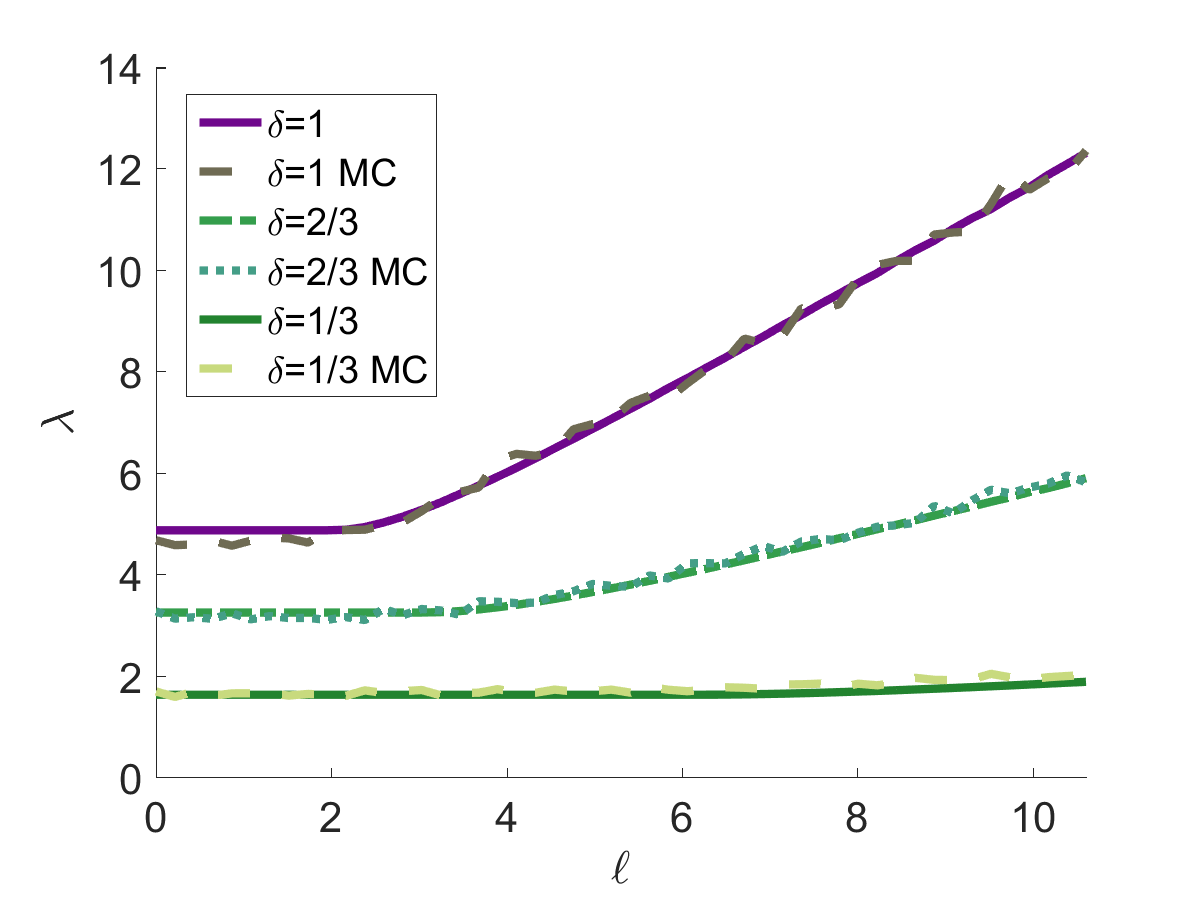}
\end{subfigure}%
\begin{subfigure}{.5\textwidth}
  \centering
  \includegraphics[scale=0.4]
{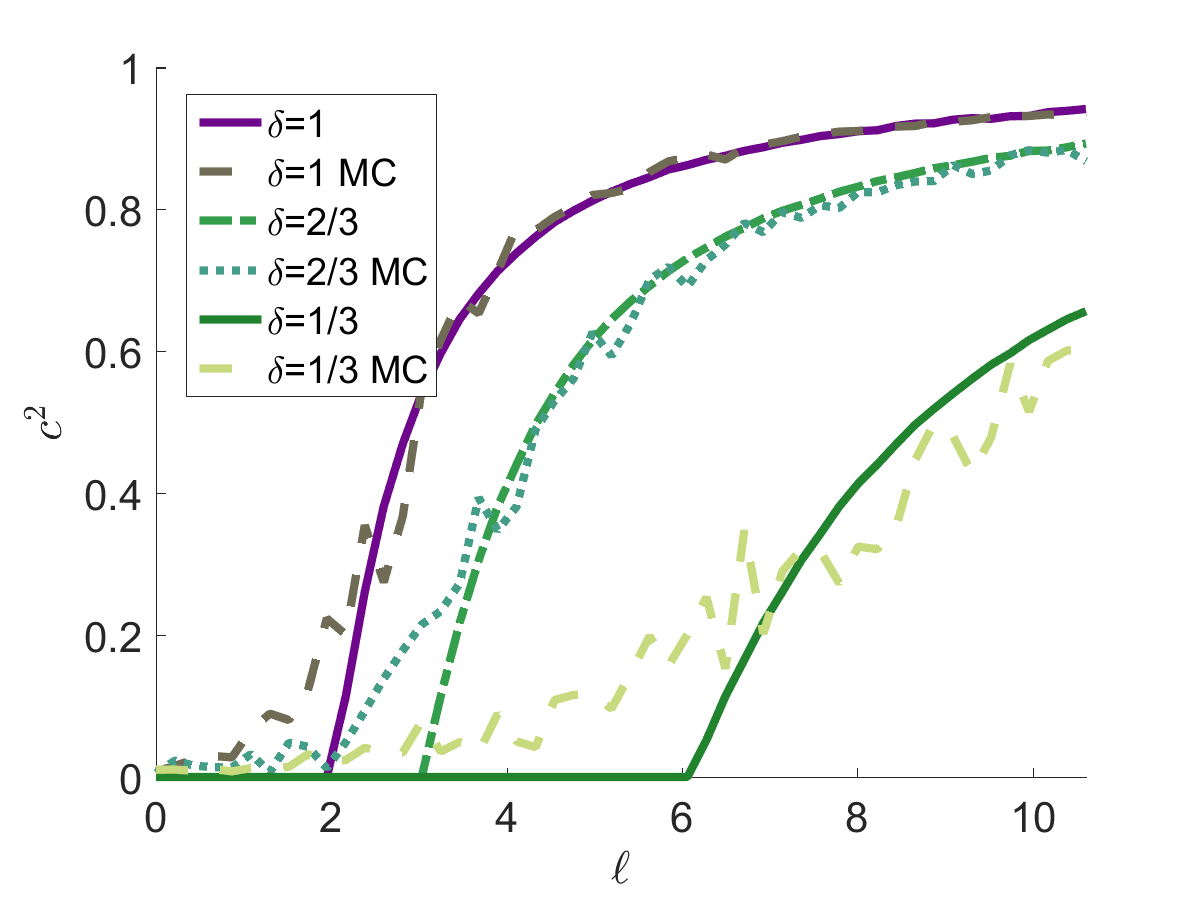}
\end{subfigure}
\caption{The effect of reduction on sample spikes and correlations between PCs. Formulas from Thm.\ \ref{spike_proj_multi} computed with {\sc Spectrode} \citep{dobriban2015efficient} overlaid with simulations. See Sec.\ \ref{proj_simu}. }
\label{spectrode_proj}
\end{figure}

We report the results of a numerical study to gain insight into our theoretical results.   
We consider model \eqref{po_def}, where $X_i = \ell^{1/2} z_i u + \ep_i$, where the noise is heteroskedastic, $\ep_i \sim \mathcal{N}(0,\Gamma)$, and $\Gamma$ is the diagonal matrix of eigenvalues of a $p\times p$ autoregressive (AR) covariance matrix of order 1, with entries $\Sigma_{ij} = \rho^{|i-j|}$. We set $Y_i = D_i X_i$, where $D_i$ have iid Bernoulli($\delta$) entries. We set the missingness parameter $\delta$ to the values $1/3,2/3$ and $1$. We choose the AR autocorrelation coefficient $\rho=0.5$, and vary the spikes $\ell$ from 0 to 3.5. 

We compute numerically the formulas in Thm.\ \ref{spike_proj_multi}, using the recent {\sc Spectrode} method \citep{dobriban2015efficient}, see Sec.\ \ref{spectrode} for the details.  We compare this with a Monte Carlo simulation with $n=200$, $\gamma=1/2$, $z_i$, $u$ generated as Gaussian random variables, and the results averaged over $10$ Monte Carlo trials. The results---displayed on Fig. \ref{spectrode_proj}---allow us to study the effect of reduction on spiked models. In particular, we observe the following phenomena: 
\bitem
\item The theory and simulations show good agreement. For eigenvalues, the results are very accurate. For the cosine, the results are more variable, and especially so for small $\delta$.

\item In the left plot of Fig. \ref{spectrode_proj}, we see that the empirical spike is an increasing function of the population spike $\ell$. Moreover, the location of the phase transition (PT) decreases with $\delta$, i.e., reduction degrades the critical signal strength.  
\item Similarly, in the right plot of Fig. \ref{spectrode_proj}, we see that the cosine between population and empirical PCs increases with the population spike $\ell$. For a given $\ell$, the cosine decreases as $\delta \to 0$. 
\eitem
It is not hard, but beyond our scope, to formalize the last two observations into theorems.

 \section{Covariance matrix estimation}
 \label{sec:cov_est}
In this section, we develop methods for covariance estimation in the reduced-noise model $Y_i = D_i X_i + D_i \varepsilon_i$ (Secs.\ \ref{sec:spec_redu}, \ref{opti_fun}) and in the unreduced-noise model $Y_i = D_i X_i + \varepsilon_i$ (Sec.\ \ref{opti_fun_unre}). We also discuss some related work in Sec.\ \ref{alt_est_cov}. Finally, we present numerical experiments illustrating the results in Sec.\ \ref{cov_numer}.
 
We restrict our attention to a special case of the diagonally reduced model we considered in Sec.\ \ref{proba}. We assume as in Sec.\ \ref{red_stand_spiked} that the entries of the reduction matrices $D_i$ are independently and identically distributed. We also suppose that the noise $\varepsilon_i$ is white, with variance 1 on each coordinate; that is,
$
        \text{Cov}(\varepsilon_i) = I_p.
$
 We will also require that the coordinates of $\varepsilon_i$ and the diagonal entries of $D_i$ both have finite eighth moments. Recall that in the setting of Cor.\ \ref{standard_spike_proj}, we have
$   \mu = \mathbb{E} D_{ij}
$
 and also that
$
 \sigma^2 = \text{Var}(D_{ij}).
$
The second moment is $m = \mathbb{E}D_{ij}^2 = \mu^2 + \sigma^2$, and 
$
        \delta = \mu^2/m.
$
For data missing uniformly at random, $m = \mu = \delta$ is the probability that each entry is observed. 

\subsection{The reduced-noise model}
\label{sec:redu_noise_mod}
 
 In the reduced-noise model, we observe $n$ samples of the random vector $Y = D (S + \varepsilon)$. It is then easy to see that we have the following formulas relating the covariance matrix of the signal $\Sigma_S$ and the covariance matrix of the observation $\Sigma_Y$:
        \begin{align}
        \label{eq:sig_Y}
        \Sigma_{Y} &= \mu^2 \Sigma_S 
                            + \sigma^2 \diag(\Sigma_S) 
                            +m I_p
        \\
        \nonumber
        \Sigma_S &= \frac{1}{\mu^2} \Sigma_{Y}
                    - \frac{\sigma^2}{m \mu^2} \diag(\Sigma_{Y})
                    -  I_p 
        \end{align}

These equations make it clear that the sample covariance matrix $\hat{\Sigma}_Y=n^{-1} \sum_{i=1}^n Y_i Y_i^\top$ is a biased estimator of the signal covariance matrix $\Sigma_S$.  Based on the second equation, we consider the following \emph{debiased} estimator of $\Sigma_S$:
        \begin{align}
        \label{sig_hat}
        \hat{\Sigma}_S = \frac{1}{\mu^2} \hat{\Sigma}_{Y}
                    - \frac{\sigma^2}{m \mu^2} \diag(\hat{\Sigma}_{Y}) 
                    - I_p.
        \end{align}
Here we assume for simplicity that $\mu,\sigma^2$ are known; but these scalar parameters are  straightforward to estimate from the observed $D_i$. In the special case of data missing completely at random, i.e., of iid sampling of entries with probability $\delta$, we have $\mu^2 = \delta^2$ and $m = \delta$, so this formula becomes
        \begin{align}
        \label{sig_hat_delta}
        \hat{\Sigma}_S &= \frac{1}{\delta^2} \hat{\Sigma}_{Y}
                    + \bigg(\frac{1}{\delta} - \frac{1}{\delta^2}\bigg)
                         \diag(\hat{\Sigma}_{Y}) 
                    - I_p.
        \end{align}
If our goal is to estimate $\Sigma_X = \Sigma_S+I_p$ instead of $\Sigma_S$, the corresponding unbiased estimator is
$       \hat{\Sigma}_X = \hat{\Sigma}_{Y}/\delta^2
                    + (\delta - 1)
                         \diag(\hat{\Sigma}_{Y})/\delta^2.$
 This recovers the unbiased estimator of $\Sigma_X$ proposed by \cite{lounici2014high}. That paper proposes to estimate $\Sigma_X$ by applying the soft-thresholding function 
$
        \eta_{\tau}(\lambda) = (\lambda - \tau)_+
$
 to the empirical eigenvalues $\lambda$ of the covariance $\hat{\Sigma}_X$. 
 \cite{lounici2014high} proves error bounds for this estimator in both operator and Frobenius norm losses, for covariance matrices $\Sigma_X$ of small effective rank $r_{eff}(\Sigma) = \tr(\Sigma)/\|\Sigma\|_{op}$. In contrast, we want to estimate the covariance matrix $\Sigma_S$ of the signal. For this different task, in the spiked covariance model, the function $\eta_\tau$ is not optimal, as we will show in Section \ref{opti_fun}.

In the next section, we employ the probabilistic results from Sec.\ \ref{proba} to determine the asymptotic spectral theory of $\hat{\Sigma}_Y$. In particular, we find asymptotic formulas for the eigenvalues, and the angles between its PCs and those of the population covariance $\Sigma_S$. Next, we show how to use these results in conjunction with the theory of \cite{donoho2013optimal} to derive optimal non-linearities $\eta$ of the spectrum of $\hat{\Sigma}_Y$ to estimate $\Sigma_S$ for a variety of loss functions.

 \subsection{The asymptotic spectral theory of $\hat{\Sigma}_S$ in reduced-noise}
 \label{sec:spec_redu}
 In this section, we will analyze the asymptotic spectral theory of the debiased estimator $\hat{\Sigma}_S$; that is, the limiting eigenvalue distribution, spikes, and limiting angles of its top eigenvectors with those of $\Sigma_S$.  We will rely on Corollary  \ref{standard_spike_proj} from Section \ref{proba} and an argument controlling the diagonal terms in the proof in Sec.\ \ref{diag_control_pf}.

 \begin{cor}
 \label{cor:s_hat}
 Let $1 \le k \le r$. Suppose that $\ell_k$ satisfies 
$    \ell_k > \sqrt{\gamma}/\delta.
$
 Then in the limit $p,n \rightarrow \infty$ and $p/n \rightarrow \gamma$, the $k^{th}$ largest eigenvalue of $\hat{\Sigma}_S$, $k=1,\dots,r$, converges almost surely to
        \begin{align}
        \label{lim_s_hat}
        \frac{1}{\delta} (\delta \ell_k + 1)
                \bigg( 1 + \frac{\gamma}{\delta \ell_k} \bigg) 
                - \frac{1}{\delta}.
        \end{align}
 The distribution of the bottom $p-r$ eigenvalues of $\hat{\Sigma}_S$  converges to a shifted and scaled Marchenko-Pastur distribution $(\mu_{MP}-1)/\delta$ supported on the interval
$[(1-\sqrt{\gamma})^2 - 1, (1+\sqrt{\gamma})^2- 1]/\delta$. 

 If $\hat{u}_k^\prime$ is the $k^{th}$ eigenvector of $\hat{\Sigma}_{Y}$ and $\hat{u}_k$ is the $k^{th}$ eigenvector of $\hat{\Sigma}_S$, then almost surely we have
$ \lim_{n \rightarrow \infty} \langle \hat{u}_k^\prime,\hat{u}_k \rangle^2 = 1$
 and 
\[
        \lim_{n \rightarrow \infty} \langle \hat{u}_k,u_k \rangle^2
        = \frac{1 - \gamma/(\delta \ell_k)^2}{1 + \gamma/(\delta \ell_k)}.
\]
If $\ell_k \le \sqrt{\gamma}/\delta$, then the top eigenvalue converges to the upper edge of the shifted MP distribution, and the cosine converges to 0.
 \end{cor}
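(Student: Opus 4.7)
The plan is to reduce the problem to Corollary \ref{standard_spike_proj} by showing that the diagonal correction in \eqref{sig_hat_delta} acts, up to a vanishing operator-norm error, as a scalar multiple of the identity. Once that is established, Weyl's inequality and the Davis--Kahan $\sin\Theta$ theorem will transfer the spectral information from $\hat{\Sigma}_Y$ to $\hat{\Sigma}_S$.

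First I would rewrite the debiased estimator, using $m=\delta$ under MCAR, as
\[
\hat{\Sigma}_S \;=\; \frac{1}{\delta^2}\,\hat{\Sigma}_Y \;+\; \Bigl(\tfrac{1}{\delta}-\tfrac{1}{\delta^2}\Bigr)\diag(\hat{\Sigma}_Y) \;-\; I_p .
\]
By Corollary \ref{standard_spike_proj}, the bulk of $m^{-1}\hat{\Sigma}_Y = m^{-1}\tilde Y^\top\tilde Y$ converges to the standard MP law, the top-$r$ spikes converge a.s.\ to $\lambda(\delta\ell_k)$, and the corresponding eigenvectors $\hat u_k'$ of $\hat{\Sigma}_Y$ satisfy $\la \hat u_k', u_k\ra^2 \to c^2(\delta\ell_k)$. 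So the spectrum of $\tfrac{1}{\delta^2}\hat{\Sigma}_Y$, and hence of $\tfrac{1}{\delta^2}\hat{\Sigma}_Y - \tfrac{1}{\delta}I_p$, is already pinned down.

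The main technical step, and the real obstacle, is the uniform diagonal concentration
\[
\bigl\|\diag(\hat{\Sigma}_Y) - m I_p\bigr\|_{\op} \;=\; \max_{1\le j\le p}\Bigl|\tfrac{1}{n}\sum_{i=1}^n Y_{ij}^2 \,-\, m\Bigr| \;\xrightarrow{a.s.}\; 0.
\]
Since $\E Y_{ij}^2 = m(1+(\Sigma_S)_{jj})$ and the delocalization $|u_{kj}|\le C\log(p)^B/p^{1/2}$ gives $\max_j(\Sigma_S)_{jj} = O(\log(p)^{2B}/p)\to 0$, it suffices to control $\max_j|n^{-1}\sum_i Y_{ij}^2 - \E Y_{ij}^2|$. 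The eighth-moment assumption on $D_{ij}$ and $\ep_{ij}$ propagates to $\E Y_{ij}^8<\infty$, and applying Markov to $(n^{-1}\sum_i(Y_{ij}^2-\E Y_{ij}^2))^4$ together with a union bound over $p\sim\gamma n$ coordinates and Borel--Cantelli yields the claim. This is the delicate part: operator-norm control over a growing number of coordinates would fail under weaker moment hypotheses, which is precisely why the eighth-moment condition is imposed in this section.

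Given this, we can write $\hat{\Sigma}_S = \tfrac{1}{\delta^2}\hat{\Sigma}_Y - \tfrac{1}{\delta}I_p + E_p$ with $\|E_p\|_{\op}\to 0$ a.s. Weyl's inequality then transfers the spectrum term-by-term: the top $r$ eigenvalues converge a.s.\ to $\lambda(\delta\ell_k)/\delta - 1/\delta$, which is exactly \eqref{lim_s_hat}; the bulk converges to the shifted/scaled MP law on $[(1-\sqrt\gamma)^2-1,(1+\sqrt\gamma)^2-1]/\delta$; and in the subcritical case $\ell_k\le\sqrt\gamma/\delta$ the top eigenvalue collapses to the upper edge by the corresponding statement in Corollary \ref{standard_spike_proj}. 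For the eigenvectors, the supercritical spikes of $\tfrac{1}{\delta^2}\hat{\Sigma}_Y - \tfrac{1}{\delta}I_p$ are separated from the bulk by a deterministic gap bounded away from $0$, so the Davis--Kahan $\sin\Theta$ theorem applied with perturbation $E_p$ gives $\la\hat u_k',\hat u_k\ra^2\to 1$. Composing with the limit $\la\hat u_k',u_k\ra^2\to c^2(\delta\ell_k)$ from Corollary \ref{standard_spike_proj} yields $\la\hat u_k,u_k\ra^2 \to (1-\gamma/(\delta\ell_k)^2)/(1+\gamma/(\delta\ell_k))$, completing the proof.
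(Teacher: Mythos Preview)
Your proposal is correct and follows essentially the same route as the paper: reduce to Corollary~\ref{standard_spike_proj}, prove the diagonal concentration $\|\diag(\hat{\Sigma}_Y)-mI_p\|_{\op}\to 0$ a.s.\ via moment bounds and a union bound over coordinates, and then transfer the spectral conclusions by Weyl and Davis--Kahan. One minor remark: you wrote the argument specifically for the MCAR case ($m=\delta$, formula~\eqref{sig_hat_delta}), whereas the corollary is stated for general iid reductions with $\delta=\mu^2/m$; the extension is trivial---use formula~\eqref{sig_hat} instead, and the same diagonal concentration gives $\hat{\Sigma}_S=\tfrac{1}{\mu^2}\hat{\Sigma}_Y-\tfrac{m}{\mu^2}I_p+E_p=\tfrac{1}{\delta}\bigl(m^{-1}\hat{\Sigma}_Y-I_p\bigr)+E_p$.
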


 The shifted Marchenko-Pastur distribution arises as the limiting empirical spectral distribution of the eigenvalues corresponding to noise. This is also the case for the available case sample covariance of pure noise \citep{jurczak2015spectral}. We will discuss the available-case estimator in Secs.\ \ref{alt_est_cov} and \ref{cov_numer}.

 \subsection{Optimal shrinkage of the spectrum of $\hat{\Sigma}_S$ in reduced-noise}
 \label{opti_fun}
 \begin{table}[]
\centering
\caption{Optimal covariance shrinkage in unreduced-noise model. $\tilde{\ell}$ is the function defined by equation \eqref{tilde_ell}, $\delta = \mu^2 / m$, and $s_k = \sqrt{1 - c_k^2}$ is the asymptotic sine of the angle between the empirical and population PCs.}

{\renewcommand{\arraystretch}{1.8}
\begin{tabular}{|l|l|l|l|l|l|}
\hline
 Loss function & Eigenvalue & Asymptotic loss  & References \\ \hline
 Operator & $ \frac{ \tilde{\ell}(\delta\lambda + 1)}{\delta} $   & $\ell_1 s_1$ & 
                                              \eqref{eta_op}, \eqref{op_loss} \\ \hline
 Squared Frobenius    & $\frac{\tilde{\ell}(\delta\lambda + 1)
                          \cdot c^2(\tilde{\ell}(\delta\lambda + 1))}{\delta}$   
                                             & $\sum_{k=1}^r (1-c_k^4)\ell_k^2$ 
                                             & \eqref{eta_fr}, \eqref{fr_loss} \\ \hline
\end{tabular}
}
\end{table}

 Having characterized the asymptotic spectrum of the debiased estimator $\hat{\Sigma}_S$, we can apply the technique of \cite{donoho2013optimal} to derive optimal shrinkers of the eigenvalues of $\hat{\Sigma}_S$ to minimize various loss functions. Any of the 26 loss functions found in \cite{donoho2013optimal} can be adapted to the setting of diagonally reduced data. In Sec.\ \ref{proof_eta_2d}, we carefully check the details of this program.
%

 Write the eigendecomposition of $\Sigma_S$ as $\Sigma_S = U \Lambda U^\top$ and the eigendecomposition of the debiased estimator $\hat{\Sigma}_S$ as $\hat{\Sigma}_S = \hat{U} \hat{\Lambda} \hat{U}$. For a given function $\eta : \mathbb{R} \rightarrow [0,\infty)$, define the matrix $\hat{\Sigma}^\eta_S$ by
        \begin{align*}
        \hat{\Sigma}^\eta_S = \hat{U} \eta(\hat{\Lambda}) \hat{U}^\top
        \end{align*}
 where $\eta(\hat\Lambda)$ is the diagonal matrix that replaces the $k^{th}$ diagonal element $\hat \lambda_k$ of $\hat{\Lambda}$ with $\eta(\hat \lambda_k)$.

 For any value of $p$, let $L_p(A,B)$ denote a loss function between two $p$-by-$p$ symmetric matrices $A$ and $B$. We consider loss functions with two key properties: first, they must be \textit{orthogonally invariant}; that is, $L_p(A,B) = L_p(UAV,UBV)$ for any orthogonal matrices $U$ and $V$. Second, they must \textit{decompose over blocks}. This means that if $A_1,B_1 \in \mathbb{R}^{p_1 \times p_1}$ and $A_2,B_2 \in \mathbb{R}^{p_2 \times p_2}$ where $p = p_1 + p_2$, then either $L_p(A_1 \oplus A_2 , B_1 \oplus B_2) = \max\{L_{p_1}(A_1 , B_1), L_{p_2}(A_2 , B_2) \}$, in which case we say $L_p$ is \textit{max-decomposable}; or $L_p(A_1 \oplus A_2 , B_1 \oplus B_2) = L_{p_1}(A_1 , B_1) + L_{p_2}(A_2 , B_2)$, in which case we say $L_p$ is \textit{sum-decomposable}. Operator norm loss $L_p(A,B) = \|A-B\|_{op}$ is max-decomposable, whereas squared Frobenius norm loss $L_p(A,B) = \|A-B\|_F^2$ is sum-decomposable.

 Our goal is to find the function $\eta$ that minimizes the asymptotic loss over certain classes; that is, we seek:
        \begin{align*}
        \eta^* = \operatorname*{arg\,min}_{\eta} L_{\infty}(\Sigma_S,\hat{\Sigma}^\eta_S)
        \end{align*}
 where $L_{\infty}(\Sigma_S,\hat{\Sigma}^\eta_S)$ is the almost sure limit of $L_p(\Sigma_S,\hat{\Sigma}^\eta_S)$ as $n,p \to \infty$ and $p/n \to \gamma$. We will show from first principles that this limit is well defined. As in \cite{donoho2013optimal}, we will consider only those functions $\eta$ that \emph{collapse the vicinity of the bulk} to 0; that is, for which there is an $\varepsilon > 0$ such that $\eta(\lambda) = 0$ whenever $\lambda \le (1+\sqrt{\gamma})^2/\delta - 1/\delta +\ep$ (this is the value of the upper bulk edge, as given in Cor.\ \ref{cor:s_hat}).

 In Sec.\ \ref{proof_eta_2d}, we will show:
        \begin{align}
        \label{loss_infty}
        L_\infty(\Sigma_S,\hat{\Sigma}^\eta_S)
        = L_{2r}\bigg( \bigoplus_{k=1}^r A_2(\ell_k),
                       \bigoplus_{k=1}^r B_2(\eta(\lambda_{k}),c_{k},s_{k})
                \bigg)
        \end{align}
 where 
        \begin{math}
        A_2(\ell) =         
        \left(
        \begin{array}{c c}
         \ell & 0     \\
         0    & 0        
        \end{array}
        \right),
        \end{math}
 and $c_k = c(\delta \ell_k)$, $s_k = s(\delta \ell_k)$, where $s(\ell) = \sqrt{1 - c^2(\ell)}\ge 0$ the asymptotic sine of the angle between the empirical and population PCs, 
        \begin{align*}
        B_2(\eta(\lambda),c,s) =
        \left(
        \begin{array}{c c}
         \eta(\lambda) c^2(\delta \ell)       & \eta(\lambda)  c(\delta \ell) s(\delta \ell)     \\
         \eta(\lambda) c(\delta \ell) s(\delta \ell) & \eta(\lambda) s^2(\delta \ell)        
        \end{array}
        \right).
        \end{align*}

 Since the loss function is either max-decomposable or sum-decomposable, for $\eta$ to minimize the right side, it is sufficient that it minimize every individual term $L_{2}( A_2(\ell_k), B_2(\eta(\lambda_{k}),c_{k},s_{k}))$. That is, the asymptotically optimal $\eta$ minimizes the two-dimensional loss:
        \begin{align}
        \label{eta_2d}
        \eta^* = \operatorname*{argmin}_{\eta} L_2 (A_2(\ell), B_2(\eta(\lambda),c,s)).
        \end{align}

 This dramatically simplifies the problem, as this minimization can often be done explicitly. Deriving the optimal $\eta$ now depends on the particular choice of loss function. We consider two representative cases where a simple closed formula is easily found: operator norm loss, and squared Frobenius norm loss. The same recipe of explicitly solving the problem \eqref{eta_2d} can be used for any orthogonally-invariant and max- or sum-decomposable loss function, including those found in \cite{donoho2013optimal}.

 \subsubsection{Operator norm loss/max-decomposable losses}
 Since operator norm loss $L_p(A,B) = \| A - B\|_{op}$ is max-decomposable, equation \eqref{loss_infty} implies that
        \begin{align*}
        L_\infty(\Sigma_S,\hat{\Sigma}^\eta_S)
            = \max_{1\le k \le r} L_2(A(\ell_k),B(\eta(\lambda_k),c_k,s_k)).
        \end{align*}
 Consequently, the asymptotically optimal $\eta$ is the one that minimizes the two-dimensional loss function $L_2(A(\ell_k),B(\eta(\lambda_k),c_k,s_k)) = \| A(\ell_k) - B(\eta(\lambda_k),c_k,s_k) \|_{op}$. Repeating the derivation in \cite{donoho2013optimal}, the optimal $\eta(\lambda_k)$ sends $\lambda_k$ back to its population value, $\ell_k$. From formula~\eqref{lim_s_hat} in Cor.~\ref{cor:s_hat}, 
        \begin{align}
        \label{eta_op}
        \eta^*(\lambda) = \frac{ \tilde{\ell}(\delta\lambda + 1)}{\delta}
        \end{align}
 where $\tilde{\ell}$ inverts the spike forward map $\ell \mapsto \lambda(\ell;\gamma)$ defined in Sec.\ \ref{red_stand_spiked},
        \begin{align}
        \label{tilde_ell}
        \tilde{\ell}(y) = \frac{y - 1 - \gamma 
                          + \sqrt{(y - 1 - \gamma)^2 - 4\gamma}}{2}.
        \end{align}
 Direct computation shows that $L_2(A(\ell_k),B(\eta^*(\lambda_{k}),c_{k},s_{k})) = \ell_k s_k$; consequently, the asymptotic loss is given by the formula:
        \begin{align}
        \label{op_loss}
        L_\infty(\Sigma_S,\hat{\Sigma}^{\eta^*}_s) = \ell_1 s_1.
        \end{align}

 \subsubsection{Frobenius norm loss/sum-decomposable losses}
 Since the squared Frobenius loss $L_p(A,B) = \| A - B\|_F^2$ is sum-decomposable, equation \eqref{loss_infty} implies that
        \begin{align*}
        L_\infty(\Sigma_S,\hat{\Sigma}^\eta_S)
            = \sum_{k=1}^r L_2(A(\ell_k),B(\eta(\lambda_k),c_k,s_k)).
        \end{align*}
 Consequently, the asymptotically optimal $\eta$ is the one that minimizes the two-dimensional loss function $L_2(A(\ell_k),B(\eta(\lambda_k),c_k,s_k)) = \| A(\ell_k) - B(\eta(\lambda_k),c_k,s_k) \|_F^2$. As derived in \cite{donoho2013optimal}, the value of $\eta(\lambda_k)$ that minimizes this is $\ell_k c_k^2$. We have already seen that $\ell_k = \tilde{\ell}(\delta \lambda_k + 1) / \delta$, where $\tilde{\ell}$ is the function defined by \eqref{tilde_ell}. Consequently, with $c^2(\ell) = c^2(\ell;\gamma)$ being the cosine forward map, the formula for $\eta^*(\lambda)$ is
        \begin{align}
        \label{eta_fr}
        \eta^*(\lambda) = \frac{\tilde{\ell}(\delta\lambda + 1)
                          \cdot c^2(\tilde{\ell}(\delta\lambda + 1))}{\delta}.
        \end{align}

 A straightforward computation shows that $L_2(A(\ell_k),B(\ell_k c_k^2,c_k,s_k)) = (1-c_k^4)\ell_k^2$, and consequently, the asymptotic loss is given by the formula:
        \begin{align}
        \label{fr_loss}
        L_\infty(\Sigma_S,\hat{\Sigma}^{\eta^*}_s) = \sum_{k=1}^r (1-c_k^4)\ell_k^2.
        \end{align}

 \subsection{The unreduced-noise model}
 \label{opti_fun_unre}
 \begin{table}[]
\centering
\caption{Optimal covariance shrinkage in unreduced-noise model. $\tilde{\ell}$ is the function defined by equation \eqref{tilde_ell}, and $s_k = \sqrt{1 - c_k^2}$ is the asymptotic sine of the angle between the empirical and population PCs.}

{\renewcommand{\arraystretch}{1.8}
\begin{tabular}{|l|l|l|l|l|l|}
\hline
 Loss function & Eigenvalue & Asymptotic loss  & References \\ \hline
 Operator & $ \frac{\tilde{\ell}( \mu^2 \lambda + 1 )}{\mu^2} $   & $\ell_1 s_1$ 
                                          & \eqref{eta_op_unre}, \eqref{op_loss} \\ \hline
 Squared Frobenius    & $\frac{\tilde{\ell}( \mu^2 \lambda + 1 )
                          \cdot c^2(\tilde{\ell}(\mu^2 \lambda + 1))}{\mu^2}$   
                                        & $\sum_{k=1}^r (1-c_k^4)\ell_k^2$ 
                                        & \eqref{eta_fr_unre}, \eqref{fr_loss} \\ \hline
\end{tabular}
}
\end{table}

 In the unreduced-noise model  $Y_i = D_i s_i + \varepsilon_i$ we can develop similar methods for covariance estimation. The formulas relating $\Sigma_S$ to $\Sigma_Y$ are
        \begin{align}
        \label{eq:sig_Y_unre}
        \Sigma_{Y} &= \mu^2 \Sigma_S 
                            + \sigma^2 \diag(\Sigma_S) 
                            +  I_p
        \\
        \nonumber
        \Sigma_S &= \frac{1}{\mu^2} \Sigma_Y 
                   - \frac{\sigma^2}{m \mu^2} \text{diag}(\Sigma_Y)
                   - \frac{1}{m} I_p.
        \end{align}

 Consequently, the analogous \emph{debiased} covariance estimator is:
        \begin{align}
        \label{sig_hat_unre}
        \hat{\Sigma}_S = \frac{1}{\mu^2} \hat{\Sigma}_Y 
                   - \frac{\sigma^2}{m \mu^2} \text{diag}(\hat{\Sigma}_Y)
                   - \frac{1}{m} I_p.
        \end{align}

 In this section, we will derive optimal shrinkers of the spectrum of $\hat{\Sigma}_S$ in the unreduced-noise model using the same technique as for the reduced-noise model in Sec.\ \ref{opti_fun}. Our analysis rests on the following result, proved in Sec. \ref{diag_control_pf}:

 \begin{cor}
 \label{cor:s_hat_unred}
 Let $1 \le k \le r$. Suppose that $\ell_k$ satisfies $\ell_k > \sqrt{\gamma}/\mu^2$.
 Then in the limit $p,n \rightarrow \infty$ and $p/n \rightarrow \gamma$, the $k^{th}$ largest eigenvalue of $\hat{\Sigma}_S$ converges almost surely to
        \begin{align}
        \label{lim_s_hat2}
        \frac{1}{\mu^2} (\mu^2 \ell_k + 1)
                \bigg( 1 + \frac{\gamma}{\mu^2 \ell_k} \bigg) 
                - \frac{1}{\mu^2}.
        \end{align}
 The distribution of the bottom $p-r$ eigenvalues of $\hat{\Sigma}_S$  converges to a shifted Marchenko-Pastur distribution supported on the interval $[(1-\sqrt{\gamma})^2 - 1, (1+\sqrt{\gamma})^2- 1]/\mu^2$. 
 
 If $\hat{u}_k^\prime$ is the $k^{th}$ eigenvector of $\hat{\Sigma}_{Y}$ and $\hat{u}_k$ is the $k^{th}$ eigenvector of $\hat{\Sigma}_S$, then almost surely we have
$        \lim_{n \rightarrow \infty} \langle \hat{u}_k^\prime,\hat{u}_k \rangle^2 = 1
$ and 
        \begin{align*}
        \lim_{n \rightarrow \infty} \langle \hat{u}_k,u_k \rangle^2
        = \frac{1 - \gamma/(\mu^2 \ell_k)^2}{1 + \gamma/(\mu^2 \ell_k)}.
        \end{align*}
If $\ell_k \le \sqrt{\gamma}/\mu^2$, then the top eigenvalue converges to the upper edge of the shifted MP distribution, and the cosine converges to 0.
 \end{cor}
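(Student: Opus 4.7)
The plan is to reduce this corollary to the unreduced-noise case of Cor.\ \ref{standard_spike_proj}, which already describes the spectrum of $\hat\Sigma_Y = \tilde Y^\top \tilde Y$: its $k$-th eigenvalue converges a.s.\ to $\lambda(\mu^2\ell_k)$ when $\ell_k > \sqrt\gamma/\mu^2$, its bulk converges to the standard Marchenko-Pastur law $F_\gamma$, and the corresponding right singular vectors $\hat u_k^\prime$ of $\tilde Y$ satisfy $\langle u_j, \hat u_k^\prime\rangle^2 \to c^2(\mu^2\ell_k)\delta_{jk}$ a.s. Given the formula \eqref{sig_hat_unre}, it then suffices to show that the debiasing map from $\hat\Sigma_Y$ to $\hat\Sigma_S$ is asymptotically an explicit affine transformation of the spectrum, up to an operator-norm negligible perturbation.

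The key step is to control the diagonal correction. A direct computation in the unreduced-noise model gives $\E[Y_{ij}^2] = m(\Sigma_S)_{jj} + 1$, and the delocalization $|u_{kj}|_\infty \le C\log(p)^B/p^{1/2}$ yields $\E[Y_{ij}^2] = 1 + O(\log(p)^{2B}/p)$ uniformly in $j$. I would then establish
\[
\|\diag(\hat\Sigma_Y) - I_p\|_{\op} = \max_{1\le j\le p}\left|\tfrac{1}{n}\sum_{i=1}^n Y_{ij}^2 - 1\right| \to 0 \quad \text{a.s.}
\]
by bounding a sufficiently high centered moment of $\tfrac{1}{n}\sum_i Y_{ij}^2$ using the $8$th-moment assumption on $D_{ij}$ and $\varepsilon_{ij}$, combined with a union bound over the $p$ diagonal entries and Borel-Cantelli, valid since $p/n \to \gamma$. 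Using the algebraic identity $\sigma^2/(m\mu^2) + 1/m = 1/\mu^2$, substitution into \eqref{sig_hat_unre} then yields
\[
\hat\Sigma_S = \tfrac{1}{\mu^2}\bigl(\hat\Sigma_Y - I_p\bigr) + E_p, \qquad \|E_p\|_{\op}\to 0 \text{ a.s.}
\]

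All three conclusions now follow simultaneously. Weyl's inequality gives the $k$-th eigenvalue limit $\tfrac{1}{\mu^2}(\lambda(\mu^2\ell_k)-1)$, which upon expanding $\lambda$ is precisely \eqref{lim_s_hat2}; the bulk is the pushforward of $F_\gamma$ under $x\mapsto (x-1)/\mu^2$, supported on $[(1-\sqrt\gamma)^2-1, (1+\sqrt\gamma)^2-1]/\mu^2$; and in the subcritical regime $\ell_k\le\sqrt\gamma/\mu^2$, both the $k$-th eigenvalue of $\hat\Sigma_Y$ and the upper bulk edge converge to $(1+\sqrt\gamma)^2$, forcing the $k$-th eigenvalue of $\hat\Sigma_S$ to the upper edge of the shifted bulk. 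For the eigenvectors, since $\hat\Sigma_S - \tfrac{1}{\mu^2}\hat\Sigma_Y$ is a scalar multiple of $I_p$ up to $o_{\op}(1)$ and the spike eigenvalues are asymptotically isolated from the bulk, the Davis-Kahan theorem gives $\langle \hat u_k,\hat u_k^\prime\rangle^2 \to 1$ a.s., and the limiting cosine with $u_k$ is inherited from Cor.\ \ref{standard_spike_proj}. The main obstacle is the uniform concentration of the diagonal entries; this closely mirrors the corresponding argument for Cor.\ \ref{cor:s_hat} in the reduced-noise case (Sec.\ \ref{diag_control_pf}), with the only essential change being that the target of $\diag(\hat\Sigma_Y)$ is now $I_p$ rather than $mI_p$, reflecting the fact that the noise is not reduced.
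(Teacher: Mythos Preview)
Your proposal is correct and follows essentially the same route as the paper: reduce to Cor.\ \ref{standard_spike_proj} for the spectrum and eigenvectors of $\hat\Sigma_Y$, then show the diagonal correction in \eqref{sig_hat_unre} is asymptotically a scalar shift by establishing $\|\diag(\hat\Sigma_Y)-I_p\|_{\op}\to 0$ a.s.\ via moment bounds and a union bound, exactly mirroring the reduced-noise argument in Sec.\ \ref{diag_control_pf} with the target $I_p$ in place of $mI_p$. The paper's proof is extremely terse (``immediate from the lemma''), and your explicit invocation of Weyl and Davis--Kahan simply spells out what the paper leaves implicit.
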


 Given an empirical eigenvalue $\hat \lambda_k$ of $\hat{\Sigma}_S$, we estimate the population eigenvalue $\ell_k$ by
$  \tilde{\ell}( \mu^2 \hat \lambda_k + 1 )/\mu^2
$
 where $\tilde{\ell}$ is the function given by formula~\eqref{tilde_ell}. This estimator converges almost surely to the true value $\ell_k$ if $\ell_k$ exceeds the threshold $\sqrt{\gamma} / \mu^2$. This also gives us an estimator of the squared cosine, by the formula
$     c(\tilde{\ell}(\mu^2 \hat \lambda_k + 1))$.
 We can now derive the optimal non-linear functions on the spectrum. For operator norm loss, we have
        \begin{align}
        \label{eta_op_unre}
        \eta^*(\lambda) = \frac{\tilde{\ell}( \mu^2 \lambda + 1 )}{\mu^2},
        \end{align}
 which incurs an asymptotic loss of $L_\infty(\Sigma_S,\hat{\Sigma}^{\eta^*}_s) = \ell_1s_1$. For squared Frobenius norm loss, the optimal non-linearity is
        \begin{align}
        \label{eta_fr_unre}
        \eta^*(\lambda) = \frac{\tilde{\ell}( \mu^2 \lambda + 1 )
                          \cdot c(\tilde{\ell}(\mu^2 \lambda + 1))}{\mu^2} 
        \end{align}
 and the asymptotic loss is $L_\infty(\Sigma_S,\hat{\Sigma}^{\eta^*}_s) = \sum_{k=1}^r (1-c_k^4)\ell_k^2$.

 \subsection{Alternative linear systems for estimating $\Sigma_S$}
 \label{alt_est_cov}
 The optimal shrinkers derived in Secs.\ \ref{opti_fun} and \ref{opti_fun_unre} for estimating the covariance $\Sigma_S$ from the reduced-noise observations start from the debiased estimators \eqref{sig_hat} for reduced-noise and \eqref{sig_hat_unre} for unreduced noise. Another way of viewing these estimators is as the solution to a linear system: for reduced-noise, this system is given by equation \eqref{eq:sig_Y},  and for unreduced-noise by equation \eqref{eq:sig_Y_unre}.

 Of course, there are other linear systems yielding unbiased estimators whose spectrum we could shrink. The papers \cite{katsevich2015covariance, anden2015covariance,bhamre2016denoising} consider such an estimator, which we will briefly discuss here.

 By definition, for any $k = 1,\dots,n$, $\Sigma_S = \mathbb{E}_{S,\varepsilon}[S_k S_k^\top]$, where $\mathbb{E}_{S,\varepsilon}$ denotes the expectation with respect to the random $S_k$ and $\varepsilon_k$, but not the $D_k$. We can therefore write in the unreduced-noise model $Y_k = D_k S_k+\ep_k$:
        \begin{align}
        \label{eqs_for_Sigma}
        D_k \Sigma D_k = \mathbb{E}_{S,\varepsilon} [(D_k S_k) (D_k S_k)^\top]
              = \mathbb{E}_{S,\varepsilon} [(Y_k - \varepsilon_k) (Y_k - \varepsilon_k)^\top] 
              = \mathbb{E}_{S,\varepsilon} [Y_k Y_k^\top] + I_p.
        \end{align}
 If we knew the values of $\mathbb{E}_{S,\varepsilon} [Y_k Y_k^\top]$ for every $k$, we could derive an unbiased estimator of $\Sigma_S$ by solving the $n$ equations given by \eqref{eqs_for_Sigma}. The papers \cite{katsevich2015covariance, anden2015covariance, bhamre2016denoising} instead substitute the observed value $Y_k Y_k^\top$ for its expected value, and derive an unbiased estimator of $\Sigma_S$ by the minimization problem
        \begin{align*}
        \hat{\Sigma}_S 
            = \operatorname*{arg\,min}_{\Sigma} 
              \frac{1}{n} \sum_{k=1}^n \| D_k\Sigma D_k^\top - Y_k Y_k^\top - I_p\|_F^2.
        \end{align*}
 Differentiating in $\Sigma$, we see that $\hat{\Sigma}_S$ must satisfy the linear system:
        \begin{align*}
        \frac{1}{n} \sum_{k=1}^n D_k^\top D_k \Sigma D_k^\top D_k
            = \frac{1}{n} \sum_{k=1}^n (D_k^\top Y_k) (D_k^\top Y_k)^\top 
              - \frac{1}{n} \sum_{k=1}^n D_k^\top D_k.
        \end{align*}
%

 We now consider another linear system, defined by averaging the $n$ equations \eqref{eqs_for_Sigma}:
        \begin{align*}
        \frac{1}{n}\sum_{k=1}^n D_k \Sigma_S D_k 
            = \frac{1}{n}\sum_{k=1}^n\mathbb{E}_{S,\varepsilon} [Y_k Y_k^\top] - I_p.
        \end{align*}
 Note that the matrices $D_k$ are fixed in this equation. By replacing $\mathbb{E}_{S,\varepsilon} [Y_k Y_k^\top]$ with the estimate $Y_k Y_k^\top$, we define a new estimator of $\Sigma_S$ as the solution to the equation
        \begin{align}
        \label{sys2_unre}
        \frac{1}{n}\sum_{k=1}^n D_k \Sigma D_k 
            = \frac{1}{n}\sum_{k=1}^n Y_k Y_k^\top - I_p.
        \end{align}

 In the case of reduced-noise, we can repeat the same derivation and arrive at an unbiased estimator that satisfies the system
        \begin{align}
        \label{sys2_redu}
        \frac{1}{n}\sum_{k=1}^n D_k \Sigma D_k 
            = \frac{1}{n}\sum_{k=1}^n Y_k Y_k^\top - \frac{1}{n}\sum_{k=1}^n D_k^2.
        \end{align}
%


 We will denote the estimator solving \eqref{sys2_unre} (in the unreduced-noise case) and \eqref{sys2_redu} (in the reduced noise case) by $\hat{\Sigma}_{s}^\prime$. Taking the expectation of each side of \eqref{sys2_redu}, we arrive at the linear system \eqref{eq:sig_Y_unre}, which defines our estimator $\hat{\Sigma}_S$ in the unreduced-noise model. Similarly, taking the expectation of each side of \eqref{sys2_unre}, we arrive at the linear system \eqref{eq:sig_Y}, which defines our estimator $\hat{\Sigma}_S$ in the reduced-noise model. Consequently, we expect that, in the limit $n,p \to \infty$, $\hat{\Sigma}_S$ and $\hat{\Sigma}_S^\prime$ will be close. In fact, we can show that the relative error of the estimators converges to 0, as stated in the folowing proposition (proved in Sec.\ \ref{rel_diff_prop_pf}):

 \begin{prop}
 \label{rel_diff_prop}
 In both the reduced-noise and unreduced-noise models, the relative difference $\|\hat{\Sigma}_S - \hat{\Sigma}_S^\prime\|_{F} / \| \hat{\Sigma}_S\|_F \to 0$ almost surely as $n,p\to\infty$ and $p/n\to\gamma$.
 \end{prop}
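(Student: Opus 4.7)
The plan is to exploit the fact that both $\hat{\Sigma}_S$ and $\hat{\Sigma}_S^\prime$ are defined by inverting entrywise (Hadamard-product) linear operators, so their difference has a transparent coordinatewise form. Let $M$ be the $p\times p$ matrix with entries $M_{ij} = n^{-1}\sum_{k=1}^n D_{ki}D_{kj}$, and let $M_0$ be its expectation, whose off-diagonal entries equal $\mu^2$ and whose diagonal entries equal $m$. By construction,
\[
M_{ij}(\hat{\Sigma}_S^\prime)_{ij} = (\hat{\Sigma}_Y - R)_{ij}, \qquad (M_0)_{ij}(\hat{\Sigma}_S)_{ij} = (\hat{\Sigma}_Y - R_0)_{ij},
\]
where $R$ and $R_0$ are the diagonal shift matrices arising from the two linear systems (in the reduced-noise model, $R = n^{-1}\sum_k D_k^2$ and $R_0 = m I_p$; in the unreduced-noise model, $R = R_0 = I_p$). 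Subtracting and using $(\hat{\Sigma}_S)_{ij} = (\hat{\Sigma}_Y - R_0)_{ij}/(M_0)_{ij}$ gives the clean identity
\[
(\hat{\Sigma}_S - \hat{\Sigma}_S^\prime)_{ij} \;=\; (\hat{\Sigma}_S)_{ij}\,\frac{M_{ij}-(M_0)_{ij}}{M_{ij}} \;+\; \frac{(R - R_0)_{ij}}{M_{ij}}.
\]

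The next step is to control $M-M_0$ and $R-R_0$ uniformly. Under the assumed finite eighth moment on the entries of $D_k$, each $M_{ij} - (M_0)_{ij}$ is an average of $n$ independent mean-zero random variables with bounded fourth moment, so a Rosenthal inequality yields $\mathbb{E}|M_{ij}-(M_0)_{ij}|^4 = O(n^{-2})$. A union bound over the $O(p^2)$ pairs together with Markov and Borel--Cantelli then gives $\max_{i,j}|M_{ij}-(M_0)_{ij}| \to 0$ almost surely, and hence $\min_{i,j}M_{ij} \to \mu^2 > 0$. The same argument applied to the $p$ diagonal entries of $R-R_0$ gives $\max_i |(R-R_0)_{ii}| \to 0$ a.s., hence $\|R-R_0\|_F = o(\sqrt{p})$ a.s.

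Combining these pieces by summing the squares of the entrywise identity and applying the triangle inequality yields
\[
\|\hat{\Sigma}_S - \hat{\Sigma}_S^\prime\|_F \;\le\; \frac{\max_{i,j}|M_{ij}-(M_0)_{ij}|}{\min_{i,j} M_{ij}}\,\|\hat{\Sigma}_S\|_F \;+\; \frac{\|R-R_0\|_F}{\min_{i,j} M_{ij}},
\]
so that $\|\hat{\Sigma}_S - \hat{\Sigma}_S^\prime\|_F = o(1)\cdot \|\hat{\Sigma}_S\|_F + o(\sqrt{p})$ a.s. Finally, by Cor.~\ref{cor:s_hat}, the bulk of the spectrum of $\hat{\Sigma}_S$ converges to a nondegenerate shifted-and-scaled Marchenko--Pastur law with strictly positive second moment, so $\|\hat{\Sigma}_S\|_F^2 = \sum_k \hat{\lambda}_k^2 = \Theta(p)$ almost surely. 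Dividing by $\|\hat{\Sigma}_S\|_F = \Theta(\sqrt p)$ then yields the claim.

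The main obstacle is the uniform entrywise control of $M$: under only an eighth moment assumption, one must ensure that the chosen concentration tool (a Rosenthal-type bound on the fourth moment of $M_{ij}-(M_0)_{ij}$, then Markov) decays fast enough to survive the union bound over $p^2 \asymp n^2$ entries. Weaker moment conditions would not afford this. The reduced- and unreduced-noise cases are handled by the same argument and differ only through the diagonal shifts $R$, $R_0$, whose contribution is of strictly lower order than the $M$ term.
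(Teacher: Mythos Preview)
Your approach is essentially the paper's, recast in entrywise form: the paper's $\Delta L_p=\mathcal L_p-L_p$ is exactly Hadamard multiplication by your $M_0-M$, its $\Delta B_p$ is your $R_0-R$, and the identity $\hat\Sigma_S-\hat\Sigma_S'=L_p^{-1}(\Delta L_p\hat\Sigma_S)-L_p^{-1}(\Delta B_p)$ is your entrywise formula divided through by $M_{ij}$. Both proofs then use the same two inputs: uniform smallness of $M-M_0$ (equivalently, of $\Delta L_p$ in operator norm on $(\mathbb R^{p\times p},\|\cdot\|_F)$) and $\|\hat\Sigma_S\|_F=\Theta(\sqrt p)$ via the second moment of the shifted Marchenko--Pastur bulk.

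There is, however, a real gap at precisely the point you flag. A fourth-moment Rosenthal bound gives $\mathbb E|M_{ij}-(M_0)_{ij}|^4=O(n^{-2})$, so Markov yields a per-entry tail of $O(\epsilon^{-4}n^{-2})$; the union bound over $p^2\asymp n^2$ entries then leaves only $O(\epsilon^{-4})$, which is \emph{not} summable in $n$, and Borel--Cantelli does not apply. (The paper, which appeals to ``a proof nearly identical to the proof of Cor.~\ref{cor:s_hat}'', glosses over the same point.) The fix is to treat diagonal and off-diagonal entries separately. For $i\ne j$ the summands $D_{ki}D_{kj}$ are products of \emph{independent} factors each with finite eighth moment, so the product itself has finite eighth moment; eighth-moment Rosenthal then gives $\mathbb E|M_{ij}-\mu^2|^8=O(n^{-4})$, and the union bound over $p^2$ off-diagonals leaves $O(n^{-2})$, summable. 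For the $p$ diagonal entries $M_{ii}=n^{-1}\sum_k D_{ki}^2$ the summands have only fourth moment, but there are merely $p$ of them, so the fourth-moment bound plus union bound give $O(n^{-1})$, also summable. With this refinement your argument closes.
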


%

%
%
%

 \subsection{Numerical experiments}
 \label{cov_numer}
 We perform experiments with missing data, where the diagonal entries of the reduction matrices $D_i$ are independent Bernoulli($\delta$) random variables. In this case, $\mu = m = \delta$, and $\sigma^2 = \delta(1 - \delta)$. The unbiased estimator to which we apply shrinkage is given by the formula~\eqref{sig_hat_delta}. In all experiments, both the signal and the noise are drawn from Gaussian distributions.

\begin{figure}[p]
\centering
\begin{tabular}{c}
\includegraphics[scale=.55]{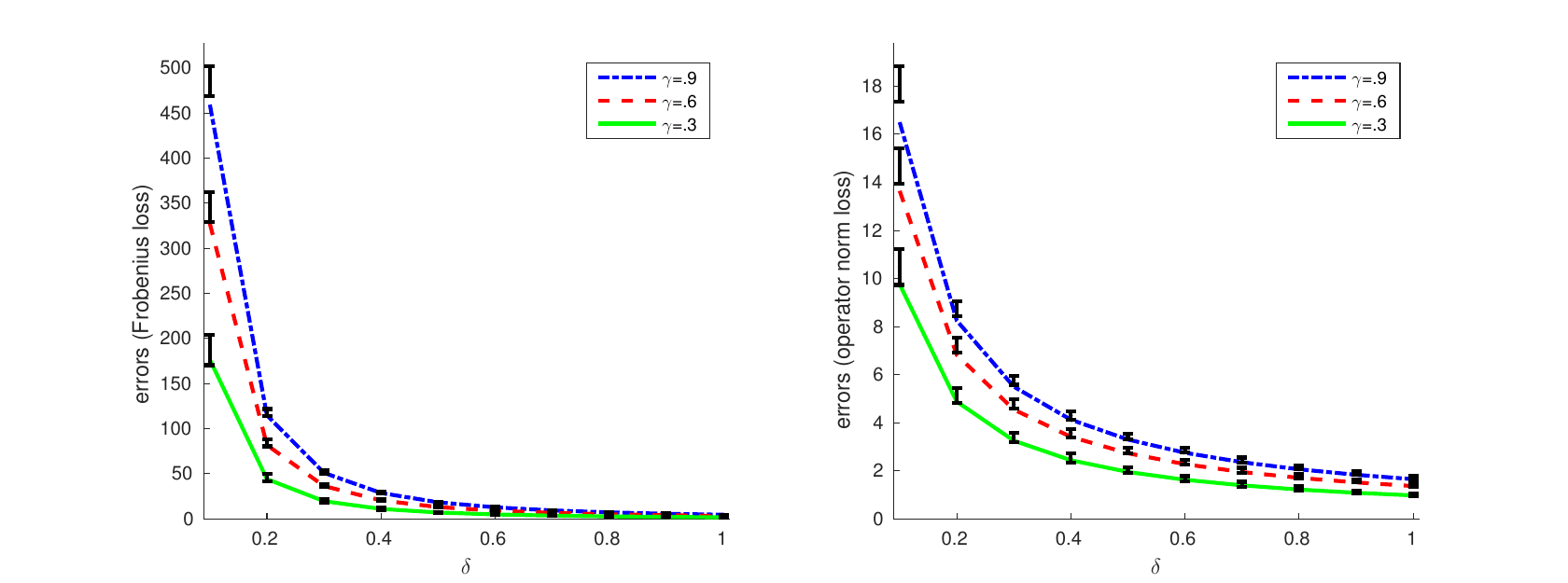}\\
\includegraphics[scale=.55]{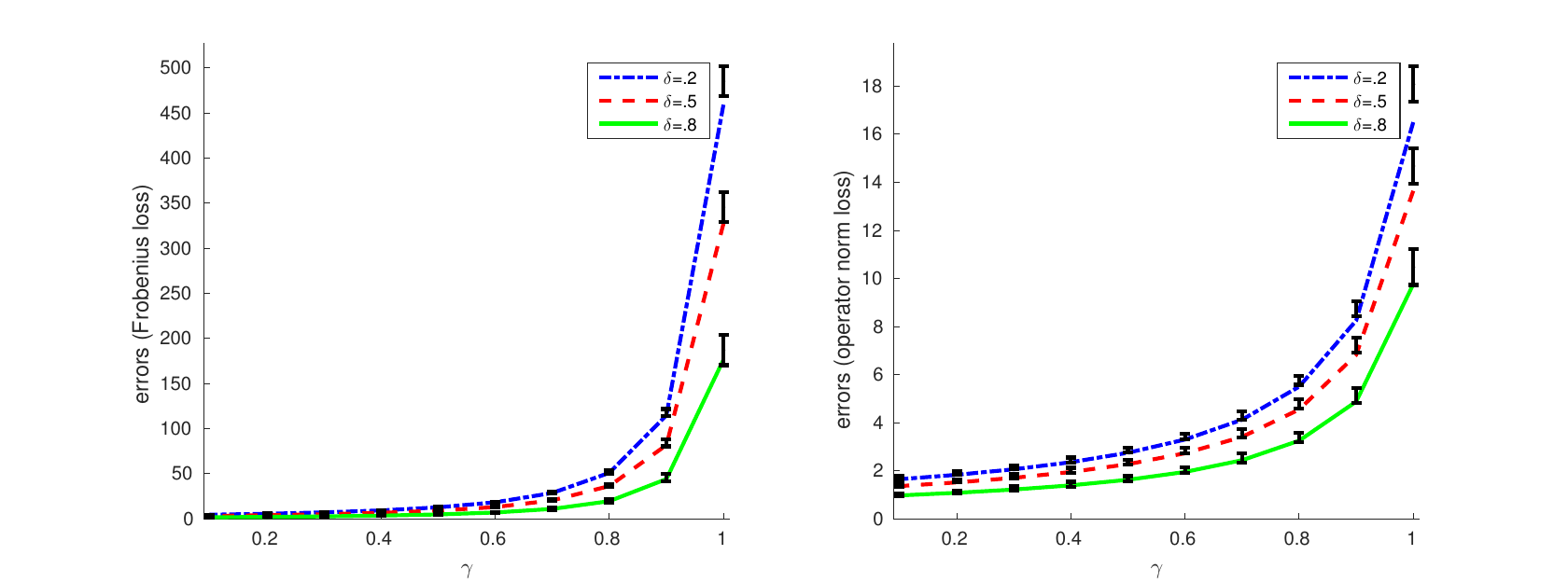}\\
\end{tabular}
\caption{\emph{Top row}: Estimation error of the covariance as a function of $\delta$. Left: Frobenius shrinker. Right: operator norm shrinker. For each $\delta$, 200 Monte Carlo tests were averaged. The error bars cover two standard deviations; the lines go through the predicted errors. \emph{Bottom row}: Same plot as a function of $\gamma$. 
}
\label{fig:MC}
\end{figure}

 \subsubsection{The errors in estimating the covariance}
 In the first experiment, we illustrate the dependence of the asymptotic errors on the parameters $\delta$ and $\gamma$. The clean signal vectors $S_i$ are drawn from a rank 1 Gaussian. The noise is also Gaussian noise, of unit variance; the ambient dimension $p$ is fixed at $p=1200$ in this experiment, while the number of samples $n$ varies with $\gamma$. 
 The top two rows of Fig.\ \ref{fig:MC} shows the errors in estimation for Frobenius loss and operator norm loss, as functions of the parameters $\gamma$ and $\delta$. The error bars cover the empirical mean error, plus/minus two standard deviations over 200 runs of the experiment.

 Several phenomena are apparent in these plots. First, the empirical mean of the errors is well-approximated by the asymptotic error formulas~\eqref{op_loss} and \eqref{fr_loss}, especially as the number of samples grows (corresponding to smaller $\gamma$, as $n = p/\gamma = 1200/\gamma$). Second, the errors decay as $\delta$ approaches 1, which is expected as larger $\delta$ increases the effective signal strength. Third, the errors grow as $\gamma$ approaches 1; this is also expected, since large $\gamma$ leads to higher dimensional problems.

%

%
%
%

 \section{Denoising}
 \label{sec:denoise}
 \subsection{Setup}
  \label{sec:denoise_setup}
 It is often of interest to denoise the observations $Y_i$ and predict the signal components $S_i$. We envision a scenario where the data $(Y_i,D_i)$, $i=1,\ldots,n$ is already collected, and we construct the denoisers using this dataset. With \emph{in-sample denoising} we denoise  $Y_i$ to predict the signal components $S_i$. This makes sense in many applications where we want to use the entire dataset to construct the denoiser.

 A closely related scenario is \emph{out-of-sample denoising}, where we want to denoise a new datapoint $(Y_0,D_0)$. This arises in applications where new samples are made available after an initial pre-processing of $Y_1,\dots,Y_n$ is performed, and it is not desired or not feasible to repeat this processing on the augmented dataset $(Y_0,D_0),(Y_1,D_1),\dots,(Y_n,D_n)$ for every new data point.

 While these two settings are very closely related, it turns out, perhaps surprisingly, that the optimal way to construct the denoisers differs substantially between the two. The reason turns out to be closely related to the observation that in high dimensions, the addition of a single datapoint changes the direction of the PCs. We will explain this phenomenon in detail below.

We first study the reduced-noise model from \eqref{po_def} in the setting of Cor.\ \ref{standard_spike_proj}, where the diagonal entries of $D_i$ are drawn iid from a distribution with mean $\mu$ and variance $\sigma^2$, and the observations are $Y_i = D_i(S_i+\ep_i)$. This is a special type of \emph{random effects} model, as the ``effects'' $z_{ik}$ of the ``factors'' $u_k$ in $\smash{S_i = \sum_{k=1}^r \ell_k^{1/2} z_{ik} u_k}$ are random from sample to sample.  As usual in random effects models, the optimal way to predict $S_i$ from a mean squared error (MSE) perspective is to use the Best Linear Predictor---or BLP---\cite[e.g.,][Sec.\ 7.4]{searle2009variance}. The BLP of $S_i$ is the predictor $\hat S_i = MY_i$ that minimizes $\E\|\hat S_i - S_i\|^2$.

It is well known that the BLP is $\hat S_i^{BLP}  = \Cov{S_i,Y_i}  \Cov{Y_i,Y_i}^{-1} Y_i$.  Under the assumptions of Cor.\ \ref{standard_spike_proj}, we can show  (see Sec.\ \ref{sv_shr_equiv}) that the BLP has the same asymptotic MSE properties as a denoiser of the following simpler form:
 \beq\label{blp_multi}
\hat S_i^{\tau,B} = \sum_{k=1}^{r} \tau_k u_k u_k^\top Y_{i}.
 \eeq
 The denoisers $\hat S_i^{\tau,B}$ are indexed by $\tau = (\tau_1,\ldots, \tau_r)$, and our argument shows that with the choice $\tau_k = \mu\ell_k/(\mu^2 \ell_k + m)$ they are in fact asymptotically equivalent to BLP denoisers.

 In practice, the true PCs $u_k$ are not known, so we use the Empirical BLP (EBLP), where we estimate the unknown parameters $ u_k$ using the entire dataset. Here we will use the $k$-th top right singular vector $\hat u_k$ of the $n \times p$ matrix $Y$ with rows $Y_i^\top$ as an estimator of $u_k$. In analogy with the simplified form of the denoisers in \eqref{blp_multi}, we will consider EBLPs scaled by $\eta = (\eta_1,\ldots,\eta_r)$ having the form:
 \begin{equation}
\label{eblp}
 \hat S_i^{\eta} = \sum_{k=1}^r \eta_k \hat u_k \hat u_k^\top Y_{i}.
\end{equation}
 Our goal is to find the optimal scalars $\eta_k$, and characterize their MSE.

 \subsection{In-sample denoising}

 First, we will study \emph{in-sample} denoising, where the data to be denoised are also used to construct the denoisers. Our main findings for the asymptotic MSE (AMSE) are summarized in Table \ref{denoise_tab} (in the single-spiked case) and in the following theorem, proved in Sec.\ \ref{den_pfs}.

\begin{table}[]
\centering
\caption{Denoising in the single-spiked case. BLP: population Best Linear Predictor using $u$. EBLP: in-sample empirical Best Linear Predictor using $\hat u$. EBLP-OOS: out-of-sample empirical Best Linear Predictor using $\hat u$. Here we abbreviate $\lambda  = \lambda(\delta\ell;\gamma)$, $c^2 = c^2(\delta\ell;\gamma)$, $s^2 = 1-c^2$, $\beta = 1 +\gamma/(\delta\ell)$.}
\label{denoise_tab}
{\renewcommand{\arraystretch}{1.8}
\begin{tabular}{|l|l|l|l|l|l|}
\hline
Name & Definition & Asy MSE & Asy Opt $\eta$  & Asy Opt MSE & Ref\\ \hline
BLP & $\eta \cdot u u^\top Y_i $& 
$(1-\eta\mu)^2\ell + \eta^2 m$                           
& $\frac{\mu \ell}{\mu^2 \ell+m}$ 
& $\frac{m\ell}{\mu^2 \ell+m}$ 
& Thm.\ \ref{den_thm} \\ \hline
EBLP & $\eta \cdot \hat u \hat u^\top Y_i$ 
& $\ell +\eta^2 \cdot m \lambda - 2\eta \cdot \mu  \ell c^2 \cdot \beta$ 
& $\frac{\mu \ell c^2}{\mu^2 \ell+m}$ 
& $ \ell\cdot\frac{\mu^2 \ell c^2 s^2 + m}{\mu^2\ell c^2 + m}$ 
& Thm.\ \ref{den_thm} \\ \hline
EBLP-OOS & $\eta \cdot \hat u \hat u^\top Y_i$ 
& $\ell + \eta^2 \cdot (\mu^2 \ell c^2 + m) -  2\eta\cdot\mu\ell c^2$ 
& $\frac{\mu\ell c^2}{\mu^2 \ell c^2+m}$ 
& $ \ell\cdot\frac{\mu^2 \ell c^2 s^2 + m}{\mu^2\ell c^2 + m}$ 
& Prop. \ref{oos-prop} \\ \hline
\end{tabular}
}
\end{table}

\begin{thms}[In-sample denoising]
\label{den_thm}
In the setting of Cor.\ \ref{standard_spike_proj} consider in-sample best linear predictors (BLP) of the signals $S_i$ based on the observations $Y_i$. 
\benum
\item The BLP denoisers $\hat S_i^{\tau,B} = \sum_{k=1}^r \tau_k \cdot u_k u_k^\top Y_{i}$ based on the population singular vectors have an AMSE  $\lim_{n,p\to\infty} \E \|S_i - \hat S_i^{\tau,B}\|^2$ of 
$$AMSE^{B} (\tau_1,\ldots,\tau_r; \ell_1,\ldots,\ell_r, \gamma) = \sum_{k=1}^r AMSE^{B}(\tau_r;\ell_r, \gamma),$$
where $AMSE^{B}(\tau;\ell, \gamma) =  (1-\tau\mu)^2\ell + \tau^2 m$ is the AMSE of the BLP $\hat S_i^{\tau,B} = \tau \cdot u u^\top Y_{i}$ in a single-spiked model with spike strength $\ell$  under the assumptions of Cor.\ \ref{standard_spike_proj}. The asymptotically optimal coefficients are
$$\tau_k^* = \frac{\mu \ell_k}{\mu^2\ell_k+m}.$$ 
\item The EBLP denoisers $\hat S_i^{\eta} = \sum_{k=1}^r \eta_k \cdot \hat u_k \hat u_k^\top Y_{i}$, based on the empirical singular vectors have an AMSE   $\lim_{n,p\to\infty} \E \|S_i - \hat S_i^{\eta}\|^2$ of 
        \begin{align}
        \label{err_insample}
        AMSE^{E} (\eta_1,\ldots,\eta_r; \ell_1,\ldots,\ell_r, \gamma) 
            = \sum_{k=1}^r AMSE^{E}(\eta_r;\ell_r, \gamma),
        \end{align}
 where $AMSE^{E}(\eta;\ell, \gamma) = \ell +\eta^2 \cdot m \cdot \lambda(\delta\ell;\gamma) - 2\eta \cdot \mu  \ell\cdot  c^2(\delta\ell;\gamma) \cdot \beta$ is the AMSE of EBLP $\hat S_i^{\eta}= \eta \cdot \hat u \hat u^\top Y_{i}$ in a single-spiked model with spike strength $\ell$  under the assumptions of Cor.\ \ref{standard_spike_proj}. Here  $ \lambda(\delta\ell;\gamma)$ is the limit empirical spike, while $c^2 = c^2(\delta\ell;\gamma)$ is the squared cosine, both corresponding to spike strength $\delta\ell$, defined in Cor.\ \ref{standard_spike_proj}. The asymptotically optimal coefficients are
        \begin{align}
        \label{eta_in}
        \eta_k^{*}  =\frac{\mu \ell_kc_k^2}{\mu^2 \ell_k+m},
        \end{align}
 where $c_k^2 =  c^2(\delta\ell;\gamma)$.
\eenum
\end{thms}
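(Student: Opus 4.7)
The plan is to treat the two parts separately. Part 1 reduces to a direct second-moment calculation using the iid structure of the entries of $D_i$; Part 2 requires the outlier-equation method referred to in Section~\ref{sec:denoise} to handle the dependence between $\hat u_k$ and $Y_i$.

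\textbf{Part 1 (BLP).} Since $u_1,\dots,u_r$ are orthonormal and $S_i$ lies in their span, the exact decomposition
\[
\|S_i-\hat S_i^{\tau,B}\|^2 = \sum_{k=1}^r \bigl(\ell_k^{1/2}z_{ik}-\tau_k u_k^\top Y_i\bigr)^2
\]
reduces everything to $\E[z_{ik} u_k^\top Y_i]$ and $\E[(u_k^\top Y_i)^2]$. Both are controlled by moments of the bilinear forms $u_k^\top D_i u_l$ and $u_k^\top D_i \ep_i$: the iid-ness of the diagonal entries of $D_i$ gives $\E[u_k^\top D_i u_l]=\mu\,\delta_{kl}$, while the delocalization of $u_k$ forces $\V[u_k^\top D_i u_l]\to 0$. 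This yields $\E[z_{ik} u_k^\top Y_i]\to \mu\ell_k^{1/2}$ and, after splitting $u_k^\top Y_i$ into uncorrelated signal and noise parts, $\E[(u_k^\top Y_i)^2]\to \mu^2\ell_k+m$. Hence the $k$-th summand of the AMSE tends to $(1-\tau_k\mu)^2\ell_k+\tau_k^2 m$, and the first-order condition gives $\tau_k^* = \mu\ell_k/(\mu^2\ell_k+m)$.

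\textbf{Part 2 (EBLP).} Expand
\[
\E\|S_i-\hat S_i^{\eta}\|^2 = \sum_k \ell_k - 2\sum_k \eta_k \E[(S_i^\top\hat u_k)(\hat u_k^\top Y_i)] + \sum_k \eta_k^2 \E[(\hat u_k^\top Y_i)^2]
\]
using orthonormality of $\hat u_k$. Exchangeability of $(Y_i,D_i)_{i=1}^n$ together with permutation-equivariance of $\hat u_k$ yields $\E[(\hat u_k^\top Y_i)^2]=n^{-1}\E\|Y\hat u_k\|^2=\E[\sigma_k(\tilde Y)^2]\to m\lambda(\delta\ell_k)$ via Cor.~\ref{standard_spike_proj} (uniform integrability follows from standard spiked-model tail bounds). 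The same idea gives
\[
\E[(S_i^\top\hat u_k)(\hat u_k^\top Y_i)] = \E\!\left[\sigma_k(\tilde Y)\cdot \frac{(S\hat u_k)^\top\hat v_k}{\sqrt{n}}\right],
\]
where $\hat v_k$ is the $k$-th left singular vector of $\tilde Y$. Writing $S=ZL^{1/2}U^\top$ and replacing $\hat u_k^\top u_l$ by $c_k\delta_{lk}$ asymptotically (again by Cor.~\ref{standard_spike_proj}), the problem reduces to determining the left cosine $\tilde c_k := \lim z_k^\top\hat v_k/\sqrt n$.

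\textbf{Main obstacle.} Obtaining $\tilde c_k$ is the essential new step, since Cor.~\ref{standard_spike_proj} only records the right cosine $c_k$. I would follow the outlier-equation approach extending \cite{benaych2012singular}: start from $\tilde Y\hat u_k = \sigma_k(\tilde Y)\hat v_k$, project onto $z_k$, and expand using the rank-$r$ structure of the signal part of $\tilde Y$ together with resolvent identities of the type used to prove Thm.~\ref{spike_proj_multi}. A deterministic-equivalents analysis of bilinear forms of the noise-only resolvent then pins down $\tilde c_k$ in closed form and delivers the algebraic identity $\sqrt{m\lambda(\delta\ell_k)}\cdot \ell_k^{1/2} c_k\,\tilde c_k = \mu\ell_k c_k^2\beta$. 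Substituting gives the per-spike AMSE $\ell_k+\eta_k^2 m\lambda - 2\eta_k\mu\ell_k c_k^2\beta$; differentiating and using the identity $m\lambda = (\mu^2\ell_k+m)\beta$ yields the claimed optimum $\eta_k^* = \mu\ell_k c_k^2/(\mu^2\ell_k+m)$. A secondary difficulty, also resolved by the outlier-equation method, is showing that the cross-spike bilinear forms $z_l^\top\hat v_k/\sqrt n$ and $u_l^\top\hat u_k$ vanish sufficiently fast for $l\neq k$ so that the AMSE truly decouples as a sum over $k$.
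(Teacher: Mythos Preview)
Your proposal is correct. Part~1 matches the paper's argument essentially line for line (Sec.~\ref{blp_single_spike}); the only slip is that the $u_k$ are merely \emph{asymptotically} orthogonal in the setting of Cor.~\ref{standard_spike_proj}, so your ``exact decomposition'' carries cross terms $u_k^\top u_j$ that must be shown to vanish---the paper does this explicitly.

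For Part~2 your route for the cross term is genuinely different from the paper's. The paper expands $Y_i=D_i(\ell_k^{1/2}z_{ik}u_k+\ep_i)$ and must then evaluate expectations such as $\E[z_i^2(\hat u_k^\top D_i u_k)]$ and $\E[z_i(\hat u_k^\top D_i\ep_i)]$, where $\hat u_k$, $D_i$ and $\ep_i$ are dependent through sample~$i$. This is resolved by applying the outlier equation with \emph{sample-dependent} test vectors $w=z_i^2 D_i u_k$ and $w=z_i D_i\ep_i$, followed by rank-one perturbation formulas that peel off the $i$-th row and reduce everything to leave-one-out resolvents (Lemmas~\ref{denoise_aux_2} and~\ref{outlier_eq_lem}, Sec.~\ref{denoise_aux_2_pf}). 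You instead apply exchangeability to the cross term as well, collapsing it to $\E[\sigma_k(\tilde Y)\cdot n^{-1/2}(S\hat u_k)^\top\hat v_k]$, which depends only on the singular value, the right cosine $c_k$, and the left cosine $\tilde c_k$. Since $\tilde c_k^2=(1-\gamma/(\delta\ell_k)^2)/(1+1/(\delta\ell_k))$ is already derived alongside $c_k$ in the proof of Thm.~\ref{spike_proj_multi} (it is the quantity $c_2$ in Sec.~\ref{setup}), this is not really a new step but a recall, after which your algebraic identity $\sqrt{m\lambda}\,\ell_k^{1/2}c_k\tilde c_k=\mu\ell_k c_k^2\beta$ checks out directly. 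Your route is shorter and avoids the dependent-inner-product machinery entirely; the paper's route pays the extra cost in order to build the extended outlier-equation method (Lemma~\ref{outlier_eq_lem}) as a transferable tool, which it then reuses for out-of-sample denoising (Thm.~\ref{oos-prop}) and the unreduced-noise model.
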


The basic discovery is that \emph{the optimal coefficients for empirical PCs are different from those for population PCs}. The coefficients using empirical PCs are reduced by a squared cosine compared to the coefficients using population PCs: $\eta_k^* = c_k^2 \tau_k^*$. 


 Note that we chose the optimal coefficients to minimize the limiting MSE. However, the limiting MSE, and thus the optimal coefficients, depend on the unknown parameters $\delta,\ell_k,c_k^2$. To make this a practical method, we can estimate the unknown parameters. The missingness parameter $\delta$ can be estimated by plug-in. Based on Cor.\ \ref{standard_spike_proj}, the estimation of $\ell_k$ and $c_k^2$ can be done by inverting the spike forward map $\ell_k \to \lambda(\ell_k)$, see e.g.,  \cite{bai2012estimation,donoho2013optimal}. 

 It is worth pointing out that the squared error $\|S_i - \hat S_i\|^2$ for each individual column $S_i$ of BLP and EBLP does not converge in probability or a.s. In fact, its variability is of unit order, and does not decrease as $n,p\to\infty$. However, as shown in Thm \ref{den_thm}, its expectation---the MSE---does converge. Furthermore, we will show in Sec.\ \ref{den_sv_shr} that the average error of EBLP over the \textit{entire data matrix} converges \textit{almost surely} (to the AMSE for a single column). As we will show, this is because EBLP, when applied to all columns of the data matrix, is a \textit{singular value shrinkage} estimator, defined by modifying the singular values of the data matrix while leaving the empirical singular vectors fixed.

 As a consequence, in the case of missing data the optimal AMSE agrees with that achieved by optimal singular value shrinkage described in \cite{nadakuditi2014optshrink} and \cite{gavish2014optimal}. We emphasize, however, that Thm.\ \ref{den_thm} applies to \emph{individual} data points, not just to the entire matrix.

\subsubsection{Comments on the proof}

Part 2 of Thm.\ \ref{den_thm} is a nontrivial result, because the AMSE is determined by stochastically dependent random quantities such as $u_{k}^\top D_i \hat u_{k}$. These are challenging to study, because $D_i$ and $\hat u_k$ are dependent random variables. We will analyze these quantities from first principles. 

 While we will explain our method in detail later, briefly, we use the \emph{outlier equation} approach (see Lemma \ref{outlier_eq_lem}), which reduces studying the inner products $w^\top \hat u_{k}$ to certain inner products $w^\top r$, where $r=r(Y)$ are vectors that depend on the entire dataset but not directly on the singular vector $\hat u_k$. As we will see, these inner products are more convenient to study. The basic method was introduced by \cite{benaych2012singular}, who used it to study the angles between $u_k$ and $\hat u_k$. We extend their approach to other angles, which are more challenging to study.

 \subsubsection{Singular value shrinkage and the almost sure convergence of the error in the reduced-noise model}
 \label{den_sv_shr}
 A well-studied approach to matrix denoising is known as \textit{singular value shrinkage}. Here, the singular values of the data matrix $Y$ are replaced with shrunken versions, analogous to the eigenvalue shrinkage of covariance matrices studied in Sec.\ \ref{sec:cov_est}. When applied to every row of the data matrix $Y$, EBLP is a singular value shrinkage algorithm: indeed, since $\hat S_i^{\eta} = \sum_{k=1}^r \eta_k \cdot \hat u_k \hat u_k^\top Y_{i}$, we can write the entire denoised matrix in the form
        \begin{align}
        \label{S_eta}
        \hat{S}^\eta = \sum_{k=1}^r \eta_k \cdot \hat u_k \hat u_k^\top Y
                     = \sum_{k=1}^r \eta_k \sigma_k(Y) \cdot \hat u_k \hat v_k^\top.
        \end{align}
 In other words, the denoised matrix $\hat{S}^\eta$ has the same singular vectors as the data matrix $Y$, where the singular values have been moved from $\sigma_k(Y)$ to $\eta_k \sigma_k(Y)$ for $k=1,\dots,r$ (and the remaining ones set to 0). It turns out that for the value of $\eta_k^*$ given by equation~\eqref{eta_in}, $\eta_k \sigma_k(Y)$ is the optimal singular value of the denoised matrix if we seek to minimize the asymptotic Frobenius loss $\| S - \hat{S}^\eta\|_F^2$.

 The proof of this fact follows easily from the analysis of the optimal singular value shrinkers given by \cite{gavish2014optimal}. This paper derives optimal shrinkers only when there is no missing data, and the task is to recover a low rank matrix from noisy observations of its entries. The $k^{th}$ singular value of the asymptotically optimal shrunken matrix (with respect to Frobenius loss) is $\ell_k^{1/2} c_k \tilde{c}_k$, where $c_k$ is the asymptotic cosine of the angle between the right population singular vector and the right empirical singular vector, and $\tilde{c}_k$ is the asymptotic cosine of the angle between the left population singular vector and the left empirical singular vector. In the models considered in \cite{gavish2014optimal}, these values are $c_k = c(\ell_k;\gamma)$, where $c$ is the cosine forward map from \eqref{cos_map}, and $\tilde{c}_k = \tilde c(\ell_k;\gamma)\ge 0$, where  $\tilde c^2(\ell;\gamma)=(1 - \gamma/\ell^2)/(1 + 1/\ell) $.

 As we can see from Cor.\ \ref{standard_spike_proj}, in our data model the reduced model behaves like the original model, but with spike strengths reduced from $\ell_k$ to $\delta \ell_k$. In particular, the value of $c_k^2$ is $c_k^2 = c^2(\delta \ell;\gamma)$. In fact, it is easy to see from the proof of that this is true for the left singular vectors as well; that is, $\tilde{c}^2 = \tilde{c}^2(\delta \ell_k;\gamma)$. 

 We now quote the result of \cite{gavish2014optimal} that the optimal singular value is equal to $\ell_k^{1/2}c_k\tilde{c}_k$; since formula \eqref{S_eta} shows that this is equal to $\eta^*_k \sigma_k(Y)$, the optimal coefficient $\eta_k^*$ is:
        \begin{align*}
        \eta^*_k = \frac{\ell_k^{1/2} c(\delta \ell_k  ; \gamma) 
                       \tilde{c}(\delta \ell_k  ; \gamma) }
                      {\sigma_k(Y)}
               = \ell_k^{1/2} c^2(\delta \ell_k  ; \gamma) 
                 \frac{\tilde{c}(\delta \ell_k  ; \gamma) }
                      {\ell_k^{1/2} \sigma_k(Y) c(\delta \ell_k  ; \gamma) }
        \end{align*}
 Substituting the asymptotic value for $\sigma_k(Y)$ given from \eqref{spike_eq_white}, a straightforward algebraic manipulation shows that
        \begin{align*}
        \eta^*_k           = \ell_k^{1/2} c^2(\delta \ell_k  ; \gamma) 
                 \frac{\ell_k^{1/2}}{\delta \ell_k + 1} 
               = \frac{\ell_k c_k^2}{\delta \ell_k + 1}.
        \end{align*}

 This agrees with the value for the optimal coefficient $\eta_k$ for EBLP derived in Thm.\ \ref{den_thm}. In particular, the EBLP estimator we derive for the entire data matrix and the optimal singular value shrinkage estimator are identical. Furthermore, from the analysis of \cite{gavish2014optimal}, the error of the \textit{entire} denoised matrix converges \textit{almost surely} to the expression given in equation \eqref{err_insample}.

 We have proved the following theorem:

 \begin{thms}
 If $Y = [Y_1,\ldots,Y_n]^\top$ is the $n$-by-$p$ matrix of observations in the reduced-noise model, then the asymptotically optimal singular value shrinkage estimator of the $n$-by-$p$ signal matrix $S = [S_1,\ldots, S_n]^\top$ is equal to the EBLP estimator defined in Thm.\ \ref{den_thm}. Furthermore, the squared Frobenius error of this estimator converges a.s.\ to the formula \eqref{err_insample}.
 \end{thms}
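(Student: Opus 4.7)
The plan is to leverage the optimal singular value shrinkage analysis of \cite{gavish2014optimal} together with the asymptotic spectral information supplied by Cor.\ \ref{standard_spike_proj}. The discussion preceding the theorem already contains the core of this reduction, and I would formalize it in three steps.

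First, I would observe that when the EBLP formula $\hat S_i^{\eta} = \sum_k \eta_k \hat u_k \hat u_k^\top Y_i$ is applied to every row of $Y$, the resulting estimator $\hat S^\eta$ is \emph{exactly} a singular value shrinkage estimator, as exhibited in equation \eqref{S_eta}: it retains the empirical singular vectors $\hat u_k, \hat v_k$ of $Y$ and replaces $\sigma_k(Y)$ with $\eta_k \sigma_k(Y)$, zeroing out the remainder. Consequently, minimizing the Frobenius loss $\|S-\hat S^\eta\|_F^2$ over $\eta_1,\dots,\eta_r$ is the same optimization as minimizing Frobenius loss over singular value shrinkers of rank $r$ with the empirical singular vectors held fixed.

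Second, I would transport the optimal singular value formula from \cite{gavish2014optimal} to the reduced-noise setting. Cor.\ \ref{standard_spike_proj} shows that $\tilde Y = n^{-1/2} Y$ behaves, up to the overall noise scale $m$, like a standard spiked model with spikes $\delta\ell_k$: one has $m^{-1/2}\sigma_k(\tilde Y) \to \lambda(\delta\ell_k;\gamma)^{1/2}$ and $(u_k^\top \hat u_k)^2 \to c^2(\delta\ell_k;\gamma)$. The analogous statement for the left singular vectors, $(\tilde u_k^\top \hat v_k)^2 \to \tilde c^2(\delta\ell_k;\gamma)$, follows from the same outlier-equation argument run with the roles of rows and columns swapped. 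With both cosine limits in hand, the Frobenius-optimal singular value in direction $k$ is, by \cite{gavish2014optimal}, equal to $\ell_k^{1/2} c(\delta\ell_k;\gamma)\tilde c(\delta\ell_k;\gamma)$. A one-line algebraic check, essentially the computation displayed just before the theorem, then verifies $\eta_k^* \sigma_k(Y) = \ell_k^{1/2} c(\delta\ell_k;\gamma)\tilde c(\delta\ell_k;\gamma)$ for the coefficient $\eta_k^*$ produced by Thm.\ \ref{den_thm}, establishing the equality of the two estimators.

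Finally, for the almost sure convergence of $\|S - \hat S^{\eta^*}\|_F^2$, I would cite the corresponding a.s.\ statement in \cite{gavish2014optimal}, which asserts that the squared Frobenius error of the optimal singular value shrinker converges almost surely to an explicit limit in the standard spiked model. Translating this limit through the $\ell_k \mapsto \delta\ell_k$ correspondence and tracking the noise normalization $m$ yields a value that agrees, term by term in $k$, with the AMSE formula \eqref{err_insample} already computed column-wise in Thm.\ \ref{den_thm}; this simultaneously identifies the a.s.\ limit and confirms consistency with the per-column mean squared error. The main obstacle I anticipate is the left-singular-vector claim $(\tilde u_k^\top \hat v_k)^2 \to \tilde c^2(\delta\ell_k;\gamma)$, which is needed to import \cite{gavish2014optimal} but is not spelled out in the body; verifying it requires rerunning the outlier equation argument on the transposed data matrix and rechecking that the dependence introduced by the random diagonal $D_i$ remains asymptotically negligible, exactly as it does on the right side. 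All remaining steps are routine algebra or direct citations.
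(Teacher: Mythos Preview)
Your proposal is correct and follows essentially the same route as the paper: identify EBLP-on-all-rows with a singular value shrinkage estimator via \eqref{S_eta}, import the Frobenius-optimal singular value $\ell_k^{1/2}c_k\tilde c_k$ from \cite{gavish2014optimal} using the $\ell_k\mapsto\delta\ell_k$ correspondence of Cor.~\ref{standard_spike_proj}, check algebraically that this matches $\eta_k^*\sigma_k(Y)$ with $\eta_k^*$ from Thm.~\ref{den_thm}, and cite \cite{gavish2014optimal} for the a.s.\ convergence. The paper likewise treats the left-singular-vector limit $\tilde c_k^2=\tilde c^2(\delta\ell_k;\gamma)$ as an easy byproduct of the outlier-equation proof rather than spelling it out, so your flagged ``main obstacle'' is handled in the same lightweight way.
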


 \subsubsection{Comparison with matrix completion algorithms}
 In the case when the reductions matrices are binary, the task of denoising the data matrix $Y$ to approximate $S$ is a \textit{matrix completion} problem -- see, for instance, \cite{candes2009exact}, \cite{recht2011simpler}, \cite{keshevan2009optspace} and \cite{keshevan2010noisy}.  A typical model for matrix completion is a low-rank matrix with eigenvectors satisfying an incoherence condition, with order $O(n \cdot \text{poly}(\log n))$ entries revealed uniformly at random. In the setting we study in this paper, the number of observed entries of the matrix $Y$ will be $O(n^2)$, almost an order of magnitude more. When the noise level is very small compared to the magnitude of the entries in the clean matrix (or put differently, when $\ell_r \gg 1$), then the smaller number of samples is sufficient to recover the low-rank matrix to high accuracy. The references listed above provide several methods with these guarantees.

 We compare the in-sample EBLP denoiser to the OptSpace method for matrix completion found in \cite{keshevan2009optspace} and \cite{keshevan2010noisy}. Briefly, their method removes rows/columns with too many observations, truncates the singular values of the data matrix $Y$, and then cleans up the resulting matrix by an iterative algorithm. In Figure~\ref{fig201b} we plot the squared Frobenius errors in reconstructing a rank 1 matrix, for different choices of spike size $\ell$ and missingness parameter $\delta$. In this experiment, both the signal and noise are Gaussian, $\gamma=.8$ and the dimension $p=400$. Each data point plotted is the average error over 100 Monte Carlo runs of the experiment.

 We observe that the in-sample EBLP outperforms OptSpace when $\ell$ is small relative to the noise level. As the size of $\ell$ grows, OptSpace's performance improves, and for small $\delta$ and large $\ell$ it outperforms EBLP. This is consistent with the guarantees provided for OptSpace; it does well in the low-noise regime, with a small number of samples.

 \begin{figure}[h]
 \centerline{
 \includegraphics[scale=.7]{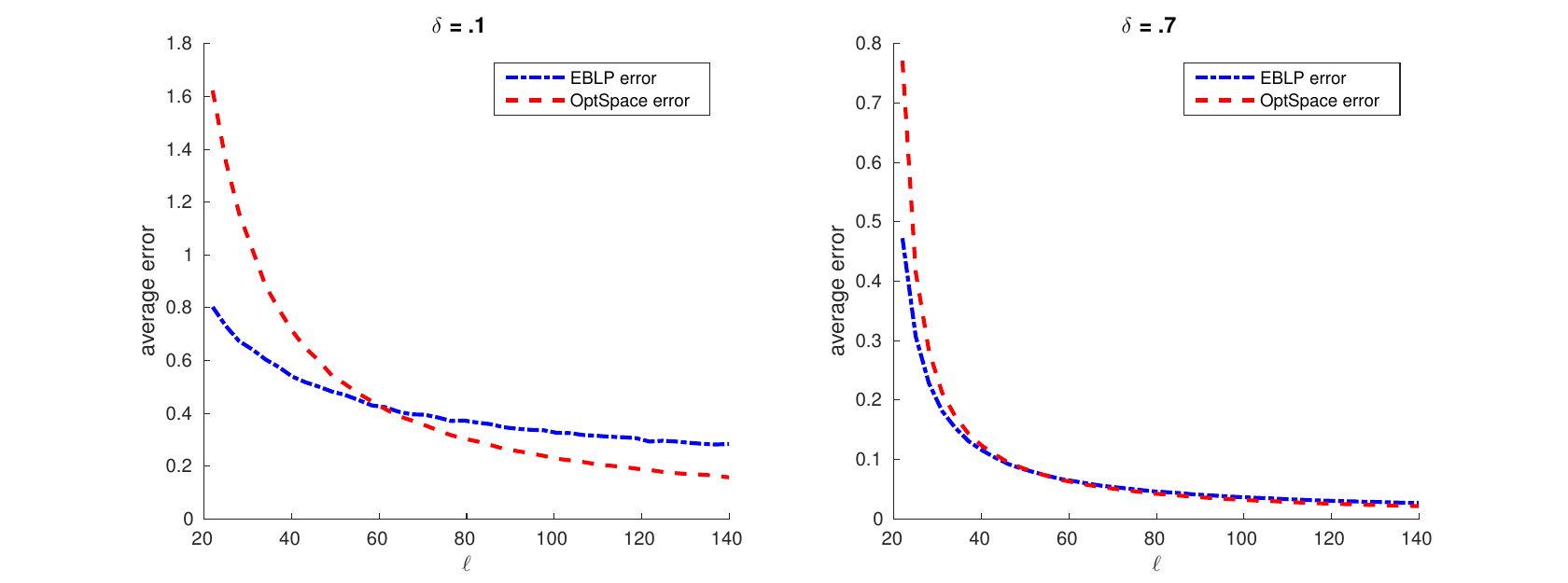}
}
 \caption{Comparison of EBLP and OptSpace for completing/denoising the matrix, for different signal strengths. Left; $\delta = 1/10$. Right: $\delta = 7/10$. Errors are measured as the squared Frobenius norm between the estimator and the full, clean, rank 1 matrix. Every experiment was averaged over 100 runs, with $\gamma=.8$ and $p=400$.}
 \label{fig201b}
 \end{figure}

 \subsection{Out-of-sample denoising}
 \label{oos}

We now study \emph{out-of-sample} denoising in the reduced-noise model, where we denoise new datapoints from the same distribution using a denoiser constructed on an existing dataset. This is typically faster than recomputing the denoiser on the entire dataset. For the oracle BLP denoiser, which assumes knowledge of $u$, this is the same as in-sample denoising. For the EBLP, however, it turns out that the optimal shrinkage coefficients in this case are different.  

To analyze this case, let $Y_0 = D_0 X_0$ be the new sample from the same distribution. We evaluate the limit of the out-of-sample mean squared prediction error $\E \|S_0 -\hat S_0^\eta\|^2$ of the EBLP $\hat S_0^{\eta} =\sum_{k=1}^r \eta_k \hat u_k \hat u_{k}^\top Y_0$, where $\hat u_k$ were formed based on $Y_i$, $i\ge 1$. 

\begin{thms}
\label{oos-prop}
In the setting of Cor.\ \ref{standard_spike_proj} consider out-of-sample denoising of a new sample $Y_0$ using the empirical BLP $\hat S_0^{\eta}$ based on the observations $Y_i$, $i=1,\ldots,n$. Then the limit of the out-of-sample prediction error is 
$$E^{\eta,o}  (\eta_1,\ldots,\eta_r; \ell_1,\ldots,\ell_r, \gamma) = \sum_{k=1}^r E^{\eta_k,o}(\eta_r;\ell_r, \gamma),$$ 
where $E^{\eta,o}(\eta;\ell, \gamma) =  \ell + \eta^2 \cdot (m +  \mu^2 \ell c^2) -  2\eta\cdot\mu\ell c^2$ is the out-of-sample prediction error of EBLP in a single-spiked model with spike $\ell$ under the assumptions of Cor.\ \ref{standard_spike_proj}. The asymptotically optimal shrinkage coefficients are
$$\eta_k^* = \frac{\mu\ell_k c_k^2}{m +\mu^2 \ell_k c_k^2}.$$
\end{thms}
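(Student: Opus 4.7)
The plan is to exploit the independence of $\hat u_k$ (built from $Y_1,\ldots,Y_n$) from the new sample $(Y_0,S_0,D_0,\ep_0)$. This independence makes the out-of-sample analysis much cleaner than the in-sample case: we can avoid the outlier-equation machinery used for Thm.\ \ref{den_thm} and reduce the problem to elementary second-moment computations combined with Cor.\ \ref{standard_spike_proj}.

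First, using orthonormality of the $\hat u_k$, I expand
\[
\|S_0 - \hat S_0^\eta\|^2 = \|S_0\|^2 - 2\sum_k \eta_k (\hat u_k^\top S_0)(\hat u_k^\top Y_0) + \sum_k \eta_k^2 (\hat u_k^\top Y_0)^2,
\]
and condition on $\hat u_k$. Independence reduces the inner expectations to deterministic quadratic forms in $\hat u_k$. Since $D_0$ is independent of $(S_0,\ep_0)$ with $\E D_0=\mu I_p$, one has $\E[S_0 Y_0^\top]=\mu\Sigma_S$; and from \eqref{eq:sig_Y}, $\Sigma_Y=\mu^2\Sigma_S+\sigma^2\diag(\Sigma_S)+m I_p$. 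So the cross term becomes $\mu\,\hat u_k^\top\Sigma_S\hat u_k$ and the quadratic term becomes $\hat u_k^\top\Sigma_Y\hat u_k$.

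Next, I pass to $n,p\to\infty$ inside these quadratic forms. Writing $\hat u_k^\top\Sigma_S\hat u_k = \sum_{j=1}^r \ell_j (u_j^\top\hat u_k)^2$, Cor.\ \ref{standard_spike_proj} gives the a.s.\ limit $\ell_k c_k^2$ with $c_k^2 = c^2(\delta\ell_k;\gamma)$. For the diagonal contribution, the delocalization bound $|u_{k,i}|\le C(\log p)^B p^{-1/2}$ gives
\[
\hat u_k^\top \diag(\Sigma_S)\hat u_k = \sum_i \hat u_{k,i}^2\sum_j \ell_j u_{j,i}^2 \le \Big(\sum_{j=1}^r \ell_j\Big)\frac{C^2(\log p)^{2B}}{p} \to 0.
\]
All the random quadratic forms are uniformly bounded by $\|\Sigma_Y\|_{\op}$, so dominated convergence transfers these a.s.\ limits to limits of expectations, producing the additive formula $\sum_k E^{\eta_k,o}(\eta_k;\ell_k,\gamma)$ with $E^{\eta,o}(\eta;\ell,\gamma)=\ell+\eta^2(\mu^2\ell c^2+m)-2\eta\mu\ell c^2$, as claimed.

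Finally, since the limiting MSE splits into independent quadratics in each $\eta_k$, I optimize coordinatewise; setting $\partial/\partial\eta_k=0$ yields $\eta_k^*=\mu\ell_k c_k^2/(\mu^2\ell_k c_k^2+m)$. The only non-routine step is the interchange of limit and expectation, which is straightforward here because the relevant quadratic forms are dominated by spectral norms of fixed matrices. The rest is bookkeeping, and the key structural input is the independence of $\hat u_k$ from the held-out sample---which is also precisely why the optimal out-of-sample coefficient differs from the in-sample one derived in Thm.\ \ref{den_thm}.
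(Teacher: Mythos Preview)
Your proposal is correct and follows essentially the same route as the paper: exploit the independence of $\hat u_k$ from $(Y_0,S_0,D_0,\ep_0)$, condition on $\hat u_k$ to reduce to deterministic quadratic forms $\hat u_k^\top\Sigma_S\hat u_k$ and $\hat u_k^\top\Sigma_Y\hat u_k$, then invoke Cor.~\ref{standard_spike_proj} and the delocalization of $u_k$ to obtain the limits. Your organization is slightly cleaner than the paper's --- by using the orthonormality of the $\hat u_k$ up front in the expansion $\|S_0-\hat S_0^\eta\|^2=\|S_0\|^2-2\sum_k\eta_k(\hat u_k^\top S_0)(\hat u_k^\top Y_0)+\sum_k\eta_k^2(\hat u_k^\top Y_0)^2$, you avoid the paper's separate treatment of the cross terms $\E(\ell_k^{1/2}z_{0k}u_k-\eta_k\hat u_k\hat u_k^\top Y_0)^\top(\ell_j^{1/2}z_{0j}u_j-\eta_j\hat u_j\hat u_j^\top Y_0)$ --- but the substance is identical.
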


See Sec.\ \ref{oos-prop-pf} for the proof. The key point is that the optimal shrinkage for out-of-sample prediction is \emph{different} from \emph{both} of the shrinkers from in-sample denoising.  We also mention that in the special case when $D_i = I_p$ for all $i$, the optimal shrinkage formula matches the one obtained by \cite{singer2013two}, under slightly more restrictive assumptions.

 It may be counterintuitive that the optimal coefficient changes when a single data point $Y_0$ is added. However, this can be understood because in high-dimensions, a single data point can drastically change the empirical eigenvectors.  For an illustration in a simpler setting, consider the sample covariance matrix based on $n$ samples, $\smash{\hat{\Sigma}_{Y,n} = n^{-1} \sum_{k=1}^n Y_k Y_k^\top}$ and the corresponding sample covariance $\hat{\Sigma}_{Y,n+1}$ based on the $n+1$ samples $Y_0,Y_1,\dots,Y_n$. Their difference is
        \begin{align*}
        \hat{\Sigma}_{Y,n} - \hat{\Sigma}_{Y,n+1}
            = \frac{1}{n} Y_0 Y_0^\top + \frac{1}{n(n+1)} \sum_{k=1}^{n+1} Y_k Y_k^\top
            = \frac{1}{n} Y_0 Y_0^\top + \frac{1}{n} \hat{\Sigma}_{Y,n+1}.
        \end{align*}
 Since the operator norm of $\hat{\Sigma}_{Y,n+1}$ converges a.s.\ to a finite quantity, the term $\hat{\Sigma}_{Y,n+1} / n$ is asymptotically negligible. However, the operator norm of $Y_0 Y_0^\top / n$ of size $\| Y_0\|^2 / n$, which converges a.s.\ to $\delta$. Since the spectral distributions of $\hat{\Sigma}_{Y,n}$ and $\hat{\Sigma}_{Y,n+1}$ converge to the same value, the fact that $\| \hat{\Sigma}_{Y,n} -  \hat{\Sigma}_{Y,n+1}\|_{op}$ is of order 1 is due to the fact that the addition of a single data point $Y_0$ completely changes the direction of the empirical PCs of the data.

To summarize and better understand our findings, we plot the optimal shrinkage coefficients and MSE for the three scenarios (BLP, in-sample optimal empirical BLP, and out-of-sample optimal empirical BLP) in a single-spiked model with $\gamma = 1/2$ and $D_i=I_p$ for all $i$, on Fig. \ref{blp_eblp_oos}. The optimal shrinkage for out-of-sample EBLP is intermediate between the stronger in-sample EBLP and the weaker out-of-sample BLP shrinkage coefficients. However, perhaps unexpectedly the MSE for the two EBLP scenarios agrees exactly! This prompts us to state the following result, proved in Sec.\ \ref{mse_equal}.

\begin{prop}[In-sample vs out-of-sample EBLP]
\label{is-vs-oos-prop}

The asymptotically optimal shrinkage coefficient for in-sample EBLP is smaller than the asymptotically optimal shrinkage coefficient for the out-of-sample EBLP. However, the asymptotically optimal MSEs are equal in the two cases.
\end{prop}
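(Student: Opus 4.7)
The plan is to derive both assertions directly from the closed-form optimal shrinkers and AMSE formulas already stated in Thm.\ \ref{den_thm} (in-sample EBLP) and Thm.\ \ref{oos-prop} (out-of-sample EBLP). Since the AMSEs decouple across the $r$ spikes, it suffices to work spike-by-spike, so I drop the index $k$; throughout I write $\beta = 1 + \gamma/(\delta\ell)$, $c^2 = c^2(\delta\ell;\gamma)$, and recall that under the reduced-noise setting of Cor.\ \ref{standard_spike_proj} one has $\delta = \mu^2/m$.

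For the coefficient inequality, the two optima read $\eta^{*,\text{in}} = \mu\ell c^2/(\mu^2\ell + m)$ and $\eta^{*,\text{out}} = \mu\ell c^2/(\mu^2\ell c^2 + m)$. They share a numerator, so the comparison reduces to their denominators: since $c^2 \le 1$, we have $\mu^2\ell c^2 + m \le \mu^2\ell + m$, with strict inequality precisely when $c^2 > 0$, i.e.\ in the super-critical regime $\ell > \sqrt{\gamma}/\delta$ covered by the theorems. This gives $\eta^{*,\text{in}} < \eta^{*,\text{out}}$ whenever the limit cosine is non-trivial.

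For the MSE equality I would first simplify the in-sample AMSE at its minimum. Using $\lambda(\delta\ell;\gamma) = (1+\delta\ell)\beta$ together with $\delta = \mu^2/m$ yields $m\lambda = (m+\mu^2\ell)\beta$. Substituting into the in-sample formula $\mathrm{AMSE}^E(\eta;\ell,\gamma) = \ell + \eta^2 m\lambda - 2\eta\mu\ell c^2\beta$ and taking the standard quadratic minimum gives
\[
\ell - \min_\eta \mathrm{AMSE}^E \;=\; \frac{\mu^2\ell^2 c^4 \beta}{\mu^2\ell + m}.
\]
The analogous computation for the out-of-sample quadratic $E^{\eta,o}(\eta;\ell,\gamma) = \ell + \eta^2(\mu^2\ell c^2 + m) - 2\eta\mu\ell c^2$ yields
\[
\ell - \min_\eta E^{\eta,o} \;=\; \frac{\mu^2\ell^2 c^4}{\mu^2\ell c^2 + m}.
\]
Equality of the two optimal MSEs is therefore equivalent to the algebraic identity $\beta(m + \mu^2\ell c^2) = m + \mu^2\ell$.

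This last identity is the only genuine step and would be the crux of the proof. I would derive it from the cosine forward map \eqref{cos_map}, which gives $\beta c^2 = 1 - \gamma/(\delta\ell)^2$. Expanding,
\[
\beta(m + \mu^2\ell c^2) \;=\; m + \frac{m\gamma}{\delta\ell} + \mu^2\ell - \frac{\mu^2\gamma}{\delta^2\ell},
\]
and using $\delta = \mu^2/m$ to rewrite $m/\delta = \mu^2/\delta^2 = m^2/\mu^2$, the two $\gamma$-dependent terms cancel exactly, leaving $m + \mu^2\ell$. The only conceptual subtlety -- and thus the ``main obstacle'' to the extent there is one -- is noticing that the precise constraint $\delta = \mu^2/m$ dictated by the entrywise iid reduction structure is exactly what makes this cancellation happen. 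Once the identity is in hand, both parts of the proposition follow immediately.
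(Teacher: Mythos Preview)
Your proposal is correct and takes essentially the same approach as the paper: both arguments compare the same explicit optimizers and reduce the MSE equality to the identity $\beta(\mu^2\ell c^2 + m) = \mu^2\ell + m$, which the paper writes equivalently as $\beta^2/(m t^2) = 1/(\mu^2\ell c^2 + m)$ after noting $m\lambda = (\mu^2\ell + m)\beta$. Your verification via $\beta c^2 = 1 - \gamma/(\delta\ell)^2$ and the cancellation forced by $\delta = \mu^2/m$ is slightly more explicit than the paper's, but the underlying computation is the same.
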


This result is interesting, because it shows that same MSE can be achieved out-of-sample as in-sample. Out-of-sample denoising should be harder, because it involves a new datapoint never seen before. The "hardness" of out-of-sample denoising should be observed in the leading order finite sample ($n$) correction to the asymptotic MSE.  However, the above result shows that this correction vanishes as $n,p\to\infty$. Using the right amount of shrinkage, the same MSE can be achieved asymptotically even out of sample.

\begin{figure}
\centering
\begin{subfigure}{.5\textwidth}
  \centering
  \includegraphics[scale=0.4]{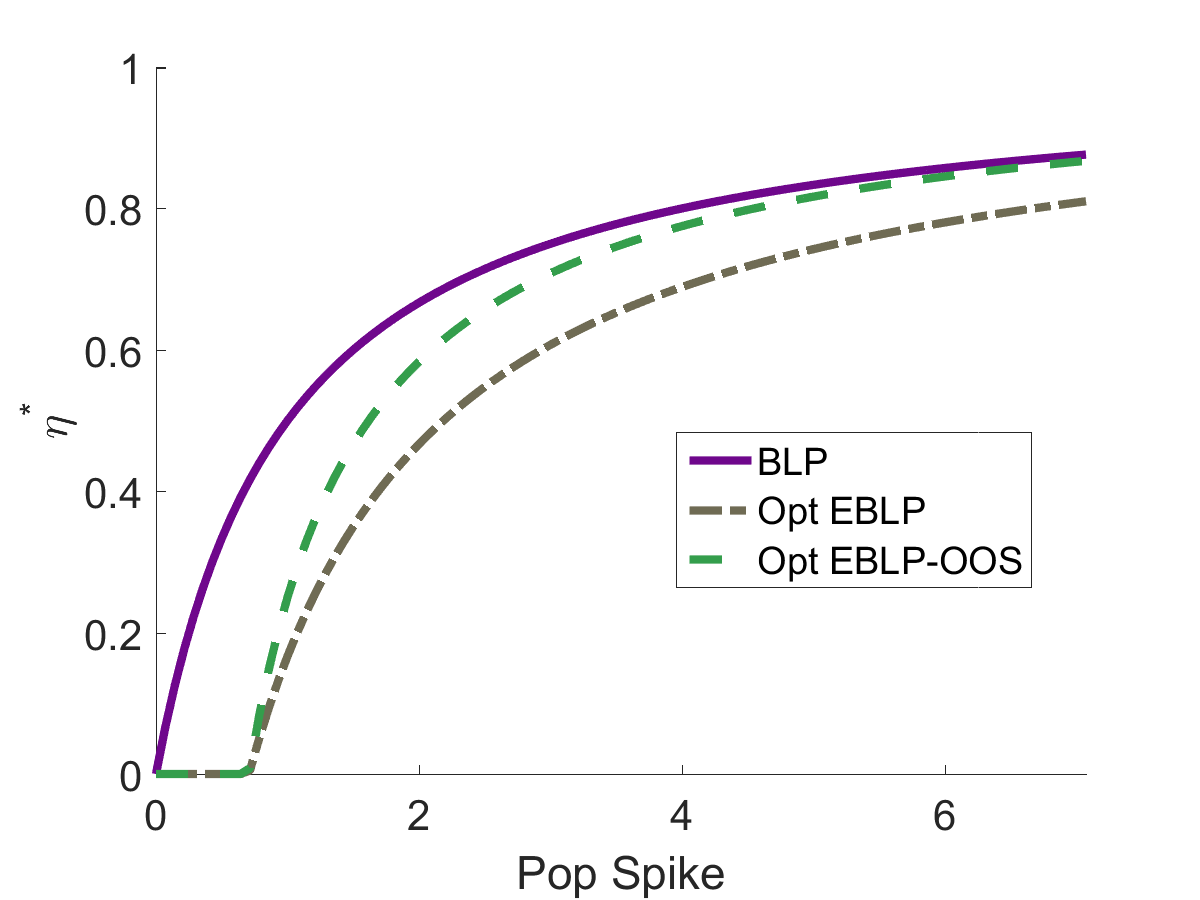}
\end{subfigure}%
\begin{subfigure}{.5\textwidth}
  \centering
  \includegraphics[scale=0.4]{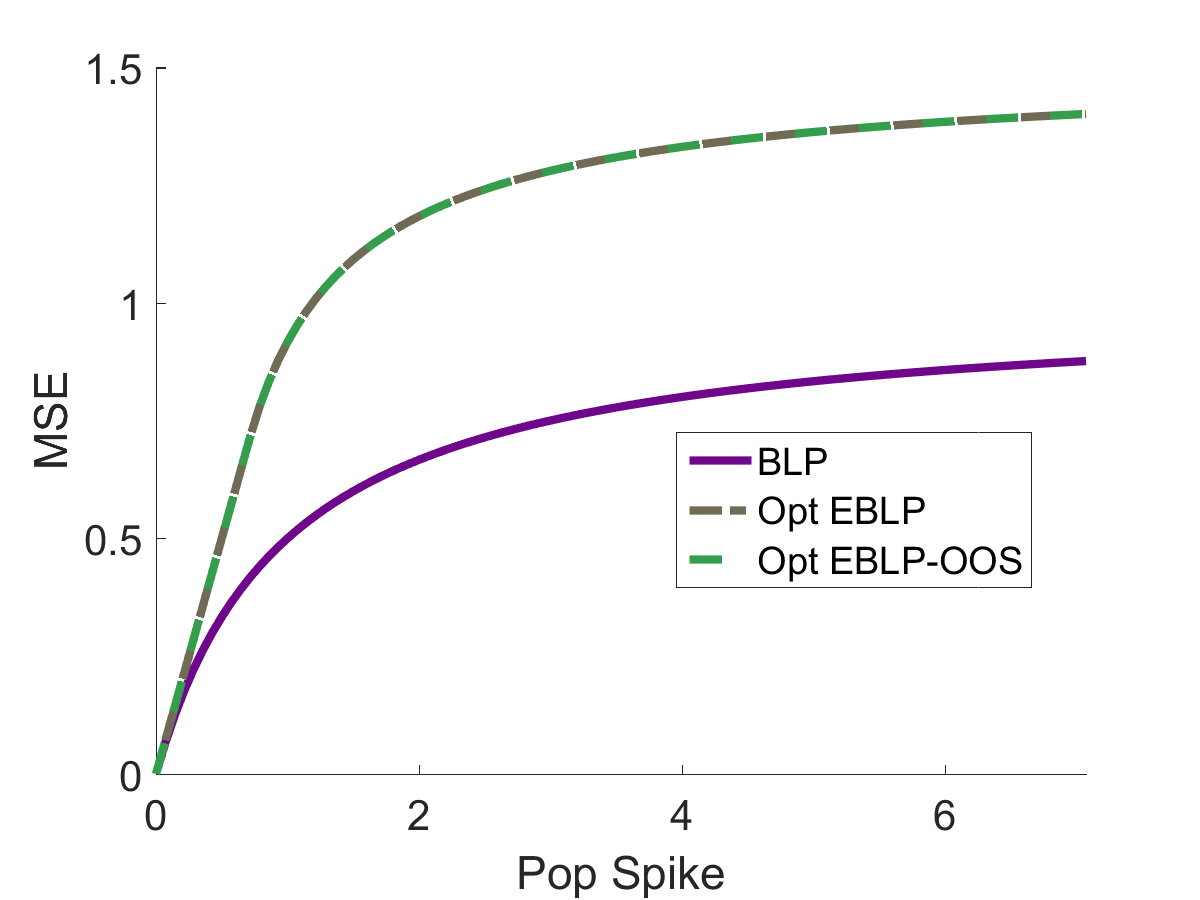}
\end{subfigure}
\caption{Optimal shrinkage coefficients (left) and MSE (right) for the three scenarios (BLP, in-sample optimal empirical BLP, and out-of-sample optimal empirical BLP) for $\gamma = 1/2$ and $\delta=1$.}
\label{blp_eblp_oos}
\end{figure}

\subsection{Unreduced-noise}
\label{den_po_def2}

We now study the denoising problem under the unreduced-noise model \eqref{po_def2}, where $Y_i  = D_i S_i +\ep_i$ under the assumptions of Cor.\ \ref{standard_spike_proj}. The analysis is similar to the reduced-noise model \eqref{po_def}. The key conclusions are summarized in Table \ref{denoise_tab_unproj noise}.

\begin{table}[]
\centering
\caption{Unreduced-noise: Denoising in the single-spiked case. BLP: population Best Linear Predictor using $u$. EBLP: in-sample empirical Best Linear Predictor using $\hat u$. EBLP-OOS: out-of-sample empirical Best Linear Predictor using $\hat u$.  Here we abbreviate $\lambda  = \lambda(\mu^2\ell;\gamma)$, $c^2 = c^2(\mu^2\ell;\gamma)$.}
\label{denoise_tab_unproj noise}
{\renewcommand{\arraystretch}{1.8}
\begin{tabular}{|l|l|l|l|l|l|}
\hline
Name & Definition & Asy MSE & Asy Opt $\eta$  & Asy Opt MSE\\ \hline
BLP & $\eta \cdot u u^\top Y_i $& 
$\ell + \eta^2 \cdot  (\mu^2 \ell + 1)  -  2\eta \cdot  \mu\ell$                           
& $\frac{\mu\ell}{\mu^2\ell+1}$ 
& $\frac{\ell}{\mu^2\ell+1}$ \\ \hline
EBLP & $\eta \cdot \hat u \hat u^\top Y_i$ 
& $\ell + \eta^2 \cdot \lambda 
-  2\eta \cdot  \mu\ell c^2 \cdot [1+\gamma/(\mu^2\ell)]$ 
& $\frac{\mu \ell c^2}{\mu^2\ell+1}$ 
& $ \ell -  \lambda\left(\frac{\mu \ell c^2}{\mu^2\ell+1}\right)^2$  \\ \hline
EBLP-OOS & $\eta \cdot \hat u \hat u^\top Y_i$ 
& $\ell + \eta^2  \cdot  (\mu^2\ell c^2+1) -  2\eta \cdot \mu \ell c^2$ 
& $\frac{\mu \ell c^2}{\mu^2\ell c^2+1}$ 
& $\ell \cdot  \frac{\mu^2\ell c^2 s^2+1}{\mu^2 \ell c^2+1}$ 
\\ \hline
\end{tabular}
}
\end{table}

The BLP of $S_i$ based on $Y_i$ is $\hat S_i^{BLP}  = \Cov{S_i,Y_i}  \Cov{Y_i,Y_i}^{-1} Y_i$. 
Under the conditions of Cor.\ \ref{standard_spike_proj}, we can show (see Sec.\ \ref{pf_unproj_blp}) that  this is asymptotically equivalent to
$
\hat S_i^\tau = \sum_{k=1}^r\tau_k  u_k u_{k}^\top Y_i,
$
where $\tau_k = \mu \ell_k/(\mu^2 \ell_k + 1)$. 

As before, the AMSE of $S_i^\tau$ with arbitrary $\tau$ decouples into the AMSEs over the different spikes $\ell_k$, and those are equal to the AMSEs for the single-spiked model with spikes equal to $\ell_k$. For a single-spiked model with spike $\ell$ we obtain in Sec.\ \ref{pf_unproj_blp} that
\begin{align*}
E\|S_i -\hat S_i^{\tau}\|^2 \to 
\ell + \tau^2  (\ell \mu^2 + 1)  -  2\tau \ell \mu.
\end{align*}
The optimal coefficient is $\tau^* = \ell\mu/(\ell\mu^2+1)$---as it should be, based on the above discussion---and it has an AMSE of $\ell/(\ell\mu^2+1)$. The advantage of this calculation is that it provides the MSE for any coefficient $\tau$.

Next, for the EBLP in the multispiked case, we use $\hat u_k$ as estimators of $u_k$. Since the form of the BLP is the same as before, the EBLP scaled by $\eta = (\eta_1,\ldots,\eta_r)$ have the form in \eqref{eblp}. To compute the AMSE, it is again not hard to see that it decouples into the corresponding single-spiked AMSEs. In the single-spiked case, we find in Sec.\ \ref{pf_unproj_eblp} that with $\lambda  = \lambda(\mu^2\ell;\gamma)$, $c^2 = c^2(\mu^2\ell;\gamma)$,
\[
\E\|S_i -\hat S_i^{\eta}\|^2 \to \ell + \eta^2 \cdot \lambda 
-  2\eta \cdot  \mu\ell c^2 \cdot [1+\gamma/(\mu^2\ell)].
\]
This shows that the optimal coefficient is $\eta^* = \mu \ell c^2/(\mu^2\ell+1)$.

Finally, for out-of-sample EBLP denoising, it is again not hard to see that the AMSE decouples over the different spikes, and each term equals the AMSE in the single-spiked case. For the AMSE in the single-spiked case, we let $(Y_0,D_0)$ be a new sample, and find (Sec.\ \ref{pf_unproj_eblp_oos})
$$\E \|S_0 -\eta \hat u \hat u^\top Y_{0}\|^2 \to 
 \ell + \eta^2 (1 + \mu^2\ell c^2) -  2\eta\mu \ell c^2.$$
The optimal coefficient is $\eta^* = \mu \ell c^2/(1 + \mu^2\ell c^2)  $, while the optimal MSE is $ \ell(1+\mu^2\ell c^2 s^2)/(1+\mu^2 \ell c^2)$. These findings are summarized in Table \ref{denoise_tab_unproj noise}.

 \subsubsection{Singular value shrinkage and the almost sure convergence of the error in the unreduced-noise model}
 \label{den_sv_shr_unre}
 As we discussed in Sec.\ \ref{den_sv_shr} for the reduced-noise model, in-sample EBLP applied to every column of the data matrix $Y = [Y_1,\dots,Y_n]^\top$ is equal to the asymptotically optimal singular value shrinkage estimator of the clean matrix $S = [S_1,\dots,S_n]^\top$. The same reasoning applies verbatim to the unreduced-noise model, replacing the almost sure limits of the empirical eigenvalues and angles with their counterparts for unreduced-noise model. 

 From Cor.\ \ref{standard_spike_proj}, the value of the asymptotic value of the cosine of the angle between the right empirical singular vector and the right population singular vector is $c_k^2$ is $c_k^2 = c^2(\mu^2 \ell;\gamma)$; and the same proof of this easily shows that the asymptotic cosine of the angle between the left singular vectors is $\tilde{c}^2 = \tilde{c}^2(\delta \ell_k;\gamma)$. 

 We now quote the formula from \cite{gavish2014optimal}, which says that the optimal singular value is equal to $\ell_k^{1/2}c_k\tilde{c}_k$; since formula \eqref{S_eta} shows that this is equal to $\eta^*_k \sigma_k(Y)$, the optimal coefficient $\eta_k^*$ is:
        \begin{align*}
        \eta^*_k = \frac{\ell_k^{1/2} c(\delta \ell_k  ; \gamma) 
                       \tilde{c}(\delta \ell_k  ; \gamma) }
                      {\sigma_k(Y)}
               = \ell_k^{1/2} c^2(\delta \ell_k  ; \gamma) 
                 \frac{\tilde{c}(\delta \ell_k  ; \gamma) }
                      {\ell_k^{1/2} \sigma_k(Y) c(\delta \ell_k  ; \gamma) }
        \end{align*}
 Substituting the asymptotic value for $\sigma_k(Y)$ given from Cor.\ \ref{standard_spike_proj}, it is easy to see that
        \begin{align*}
        \eta^*_k           = \ell_k^{1/2} c^2(\delta \ell_k  ; \gamma) 
                 \frac{\ell_k^{1/2}}{\delta \ell_k + 1} 
               = \frac{\ell_k c_k^2}{\delta \ell_k + 1}.
        \end{align*}

 This agrees with the value for the optimal coefficient $\eta_k$ for EBLP derived in Thm.\ \ref{den_thm}. In particular, the EBLP estimator we derive for the entire data matrix and the optimal singular value shrinkage estimator are identical. Furthermore, from the analysis of \cite{gavish2014optimal}, the error of the \textit{entire} denoised matrix converges \textit{almost surely} to the expression given in equation \eqref{err_insample}.

 We have proved the following theorem:

 \begin{thms}
 If $Y = [Y_1,\ldots,Y_n]^\top$ is the $n$-by-$p$ matrix of observations in the unreduced-noise model, then the asymptotically optimal singular value shrinkage estimator of the $n$-by-$p$ signal matrix $S = [S_1,\ldots, S_n]^\top$ is equal to the EBLP estimator for the unreduced-noise model defined in Sec.\ \ref{den_po_def2}. Furthermore, the squared Frobenius error of this estimator converges a.s.\ to the AMSE in the unreduced-noise model.
 \end{thms}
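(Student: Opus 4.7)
The plan is to mirror the argument used for the reduced-noise theorem in Sec.\ \ref{den_sv_shr}, making the substitution $\delta\ell_k \mapsto \mu^2\ell_k$ that distinguishes the unreduced-noise model in Cor.\ \ref{standard_spike_proj}. First I would write the in-sample EBLP estimator applied to every column of $Y=[Y_1,\ldots,Y_n]^\top$ in the form
\[
\hat S^\eta \;=\; \sum_{k=1}^r \eta_k \hat u_k\hat u_k^\top Y \;=\; \sum_{k=1}^r \eta_k\sigma_k(Y)\,\hat u_k\hat v_k^\top,
\]
which shows that the full-matrix EBLP is a singular value shrinkage procedure: its left/right singular vectors agree with those of $Y$, while the $k$-th singular value has been moved from $\sigma_k(Y)$ to $\eta_k\sigma_k(Y)$. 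This is the identity \eqref{S_eta} applied verbatim; the reduction model does not matter for this purely algebraic step.

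Next I would import the optimal Frobenius singular-value-shrinkage formula from \cite{gavish2014optimal}: the asymptotically optimal $k$-th singular value of the denoised matrix is $\ell_k^{1/2} c_k \tilde c_k$, where $c_k$ and $\tilde c_k$ are the almost sure cosines between the right (resp.\ left) empirical and population singular vectors. To specialize this to our model I would invoke Cor.\ \ref{standard_spike_proj}, which gives $c_k^2 = c^2(\mu^2\ell_k;\gamma)$ for the right singular vectors. The corresponding statement for the left singular vectors $\tilde c_k^2 = \tilde c^2(\mu^2\ell_k;\gamma)$ is not stated separately in the corollary, but as noted in Sec.\ \ref{den_sv_shr} follows from the same proof with no modification (the effective spike is $\mu^2 \ell_k$ in the unreduced-noise case, and the left/right asymmetry is solely a function of $\gamma$). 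I would either cite that step or reproduce the one-line extension.

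Having identified $c_k$ and $\tilde c_k$, setting $\eta_k^\ast\sigma_k(Y)=\ell_k^{1/2} c_k\tilde c_k$ and substituting the almost sure limit of $\sigma_k(Y)^2 = \lambda(\mu^2\ell_k;\gamma)$ from Cor.\ \ref{standard_spike_proj} yields, after the same algebraic simplification as in the reduced-noise case,
\[
\eta_k^\ast \;=\; \frac{\mu\,\ell_k\, c_k^2}{\mu^2\ell_k+1},
\]
which is exactly the in-sample EBLP coefficient derived for the unreduced-noise model in Sec.\ \ref{den_po_def2} (Table \ref{denoise_tab_unproj noise}). Finally, the a.s.\ convergence of the squared Frobenius error for the full matrix follows directly from the corresponding a.s.\ statement in \cite{gavish2014optimal}, so the limit value must coincide with the AMSE computed column-by-column in Sec.\ \ref{den_po_def2}.

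The only nontrivial obstacle I anticipate is verifying that the left-singular-vector cosine in the unreduced-noise model is indeed $\tilde c^2(\mu^2\ell_k;\gamma)$ rather than something involving $\delta$ or $\mu$ separately; this requires inspecting the proof of Cor.\ \ref{standard_spike_proj} to confirm that the spike-to-effective-spike reduction $\ell_k\mapsto\mu^2\ell_k$ operates symmetrically on left and right sides. Once that identification is made, the remainder is bookkeeping and the theorem follows.
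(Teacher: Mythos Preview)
Your proposal is correct and follows exactly the same approach as the paper's proof in Sec.\ \ref{den_sv_shr_unre}: rewrite EBLP as singular value shrinkage via \eqref{S_eta}, invoke the Gavish--Donoho optimal-shrinkage formula $\ell_k^{1/2}c_k\tilde c_k$, substitute the unreduced-noise asymptotics from Cor.\ \ref{standard_spike_proj} (with effective spike $\mu^2\ell_k$), and conclude a.s.\ convergence from \cite{gavish2014optimal}. The one point you flag about the left-singular-vector cosine is handled identically in the paper, which simply asserts that the proof of Cor.\ \ref{standard_spike_proj} also gives $\tilde c_k^2 = \tilde c^2(\mu^2\ell_k;\gamma)$.
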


\subsection{Simulations}
\label{denoise_simu}
\begin{figure}
  \centering
  \includegraphics[scale=0.4]{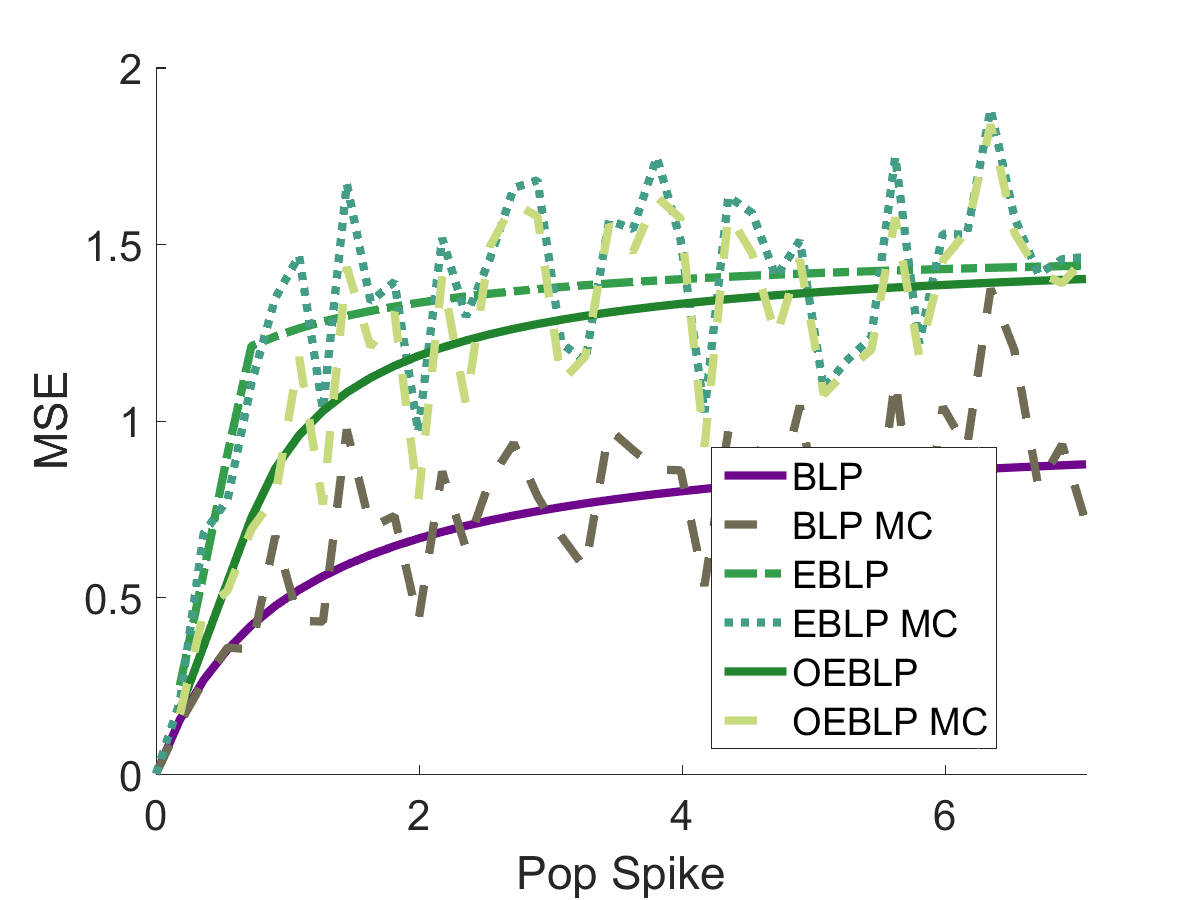}
\caption{In-sample MSE of denoising schemes: Theoretical results overlaid with Monte Carlo (MC) results. BLP: Best Linear Predictor assuming known population eigenvector $u$. Emp BLP: Empirical Best Linear Predictor, using the empirical eigenvector $\hat u$, with the sub-optimal shrinkage coefficient from the BLP. Opt Emp BLP: Empirical Best Linear Predictor using optimal shrinkage coefficient. No missing data ($\delta  = 1$). $\gamma = 1/2$.}
\label{denoise_mc}
\end{figure}

Next we perform a simulation to check the finite-sample accuracy of our formulas for denoising. In the reduced-noise model with $D_i = I_p$, we consider a single-spiked model with $n=1000$, $p=500$, and generate iid Gaussian random variables $z_i,\ep_i$, as well as a standardized iid Gaussian random vector $u$. We vary the spike strength on a grid, and compare our formulas for in-sample theoretical MSE to those obtained by averaging the denoising error in the first sample $\|\hat S_1  - S_1\|_2^2$ over 50 Monte Carlo simulations. The results in Fig. \ref{denoise_mc} show that the formulas are accurate up to the sampling error. This validates our results from Thm.\ \ref{den_thm}.

\section*{Acknowledgements}

The authors wish to thank Joakim Anden, Tejal Bhamre, Xiuyuan Cheng, David Donoho, and Iain Johnstone for helpful discussions. 

\section{Proofs}

\subsection{Proof of Thm.\ \ref{spike_proj_multi}}
\label{main_pf_proba}
The proof of Thm.\ \ref{spike_proj_multi} spans multiple sections, until Sec.\ \ref{pf_qf_lem3}.   The proof of the claims under observation models \eqref{po_def} and \eqref{po_def2} are very similar. Therefore, we present the proof of the result under model \eqref{po_def}, and outline the argument for model \eqref{po_def2} in Sec.\ \ref{sec_po_def2}. 

Moreover, to illustrate the idea of the proof, we first prove the single-spiked case, i.e., when $r=1$. The proof of the multispiked extension is provided in Sec.\ \ref{pf_multi}. The form $D_i = \mu + \Sigma^{1/2} E_i$ of the reduction matrices implies the following decomposition for the observations $Y_i$:
\begin{align*}
Y_i &=  (\mu + \Sigma^{1/2} E_i)X_i \\
&=\mu ( \ell^{1/2} z_i u + \ep_i) + \Sigma^{1/2} E_iX_i\\
&=\ell^{1/2} z_i \mu u + [\mu\ep_i +\Sigma^{1/2} E_iX_i].
\end{align*}
This suggests a ``signal+noise'' decomposition for the reduced vectors $Y_i$. Let us denote by $\nu=\mu u/\xi^{1/2}$ the normalized reduced signal, where $\xi = \|\mu  u\|^2 \to \tau$, and by $\ep^*_i = \mu\ep_i +\Sigma^{1/2} E_iX_i$ the noise component. The noise has two parts: $\mu\ep_i$ is due to sampling, while $\Sigma^{1/2} E_iX_i$ is due to projection. In matrix form, with the $n\times p$ matrix $Y$ having rows $Y_i^\top$:
\beq
\label{sig_noise}
Y = (\xi \ell)^{1/2} \tilde Z \nu^\top + \mathcal{E}^*.
\eeq

This suggests that after reduction, the signal strength $\ell$ changes to $\xi \ell$, while the noise structure changes from $\ep_i$ to $\ep_i^*$. This is not obvious, however, because the noise $\ep^*_i$ is functionally dependent on the signal $X_i$. Therefore we cannot rely on existing results. Instead, we will analyze the model from first principles, and show that the dependence is asymptotically negligible.  For non-diagonal reduction matrices $D_i$, the depencence may be asymptotically non-negligible; this explains why we currently need the diagonal assumption.

\subsubsection{Proof outline}
\label{setup}
 We will extend the technique of \cite{benaych2012singular} to characterize the spiked eigenvalues in the model \eqref{sig_noise}. We denote the normalized vector $Z = n^{-1/2}\tilde Z$, the normalized noise $N = n^{-1/2}\mathcal{E}^*$ and the normalized observable matrix $\tilde Y = n^{-1/2}Y$. Then, our model is 
\beq
\label{sig_noise_2}
\tilde Y = (\xi\ell)^{1/2}  \cdot Z \nu^\top + N. 
\eeq

 We will assume that $n,p \to \infty$ such that $p/n \to \gamma>0$. For simplicity of notation, we will first assume that $n \le p$, implying that $\gamma \ge 1$. It is easy to see that everything works when $n \ge p$.

 By Lemma 4.1 of \cite{benaych2012singular}, the singular values of $ \tilde Y $ that are not singular values of $N$ are the positive reals $t$ such that the 2-by-2 matrix
$$M_n(t) = 
\begin{bmatrix} 
t \cdot Z^\top (t^2 I_n - NN^\top)^{-1} Z & 
Z^\top (t^2 I_n - NN^\top)^{-1} N\nu \\ 
\nu^\top N^\top (t^2 I_n - NN^\top)^{-1} Z & 
t \cdot \nu^\top (t^2 I_p - N^\top N)^{-1} \nu
\end{bmatrix} 
-  
\begin{bmatrix} 0 & (\xi\ell)^{-1/2} \\  (\xi\ell)^{-1/2} & 0 \end{bmatrix}$$

is not invertible, i.e., $\det[M_n(t)]=0$. We will find almost sure limits of the entries of $M_n(t)$, to show that it converges to a deterministic matrix $M(t)$. Solving the equation $\det[M(t)] = 0$ will provide an equation for the almost sure limit of the spiked singular values of $\tilde Y$.  For this we will prove the following results: 

\begin{lem}[The noise matrix] 
\label{noise_lem}
The noise matrix $N$ has the following properties:
\benum
\item The eigenvalue distribution of $N^\top N$ converges almost surely (a.s.) to the Marchenko-Pastur distribution $F_{\gamma,H}$ with aspect ratio $\gamma \ge 1$. 
\item The top eigenvalue of $N^\top N$ converges a.s.\ to the upper edge $b_H^2$ of the support of $F_{\gamma,H}$. 
\eenum
\end{lem}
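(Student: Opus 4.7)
The plan is to split the noise matrix into a jointly independent piece --- to which classical Marchenko-Pastur theory applies --- plus a remainder that will be controlled in operator norm via delocalization and moment bounds on $z_i$. Substituting $D_{ij}=\mu_j+\sigma_j E_{ij}$, $\ep_{ij}=g_j\alpha_{ij}$, and (in the single-spiked case) $X_{ij}=\ell^{1/2}z_i u_j+g_j\alpha_{ij}$, the entry decomposition
\[
\mathcal{E}^*_{ij}=D_{ij}g_j\alpha_{ij}+\sigma_j E_{ij}\ell^{1/2}z_i u_j
\]
suggests the split $\mathcal{E}^*=A+B$ with $A_{ij}:=D_{ij}g_j\alpha_{ij}$ and $B_{ij}:=\sigma_j E_{ij}\ell^{1/2}z_iu_j$. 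Crucially, $A_{ij}$ depends only on $(E_{ij},\alpha_{ij})$, so the entries of $A$ are jointly independent over $(i,j)$; only $B$ couples coordinates through the shared $z_i$. The strategy is to prove the lemma for $A^\top A/n$ and then absorb $B$ as a negligible perturbation.

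First I would analyze the spectrum of $A$. Each entry is centered with variance $T_{jj}:=g_j^2(\mu_j^2+\sigma_j^2)$ and uniformly bounded fourth moment, by the hypotheses $\E E_{ij}^4,\E\alpha_{ij}^4<C$ and the boundedness of $\mu_j,\sigma_j,g_j$ following from Assumption 2. Writing $A=\tilde A\,T^{1/2}$ with $\tilde A_{ij}=A_{ij}/T_{jj}^{1/2}$ yields a matrix $\tilde A$ with jointly independent, standardized entries with uniformly bounded fourth moments. The general Marchenko-Pastur theorem --- in the formulation allowing independent but non-identically distributed entries under uniform fourth-moment bounds --- then gives $F_{A^\top A/n}\Rightarrow F_{\gamma,H}$ a.s., since $H_p\Rightarrow H$. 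The companion no-eigenvalues-outside-the-support theorem, combined with Assumption 2 that $\sup\supp(H_p)\to\sup\supp(H)$, additionally yields $\lambda_1(A^\top A/n)\to b_H^2$ a.s.

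Next I would bound $B$ in operator norm. Factoring $B=\ell^{1/2}Z_d\, E\, W$ with $Z_d=\diag(z_1,\ldots,z_n)$ and $W=\diag(\sigma_1 u_1,\ldots,\sigma_p u_p)$ gives
\[
\|B\|_\op\;\le\;\ell^{1/2}\cdot\max_i|z_i|\cdot\|E\|_\op\cdot\max_j|\sigma_j u_j|.
\]
A Borel-Cantelli argument using $\E|z_i|^{4+\phi}<C$ yields $\max_i|z_i|=o(n^\alpha)$ a.s.\ for some $\alpha<1/2$; the classical operator-norm bound for random matrices with iid fourth-moment entries gives $\|E\|_\op=O(\sqrt n)$ a.s.; and the delocalization $|u_j|\le C\log(p)^B/\sqrt p$ together with boundedness of $\sigma_j$ gives $\max_j|\sigma_j u_j|=O(\log(p)^B/\sqrt p)$. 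Multiplying, $\|B\|_\op/\sqrt n=o(n^{\alpha-1/2}\log(n)^B)\to 0$ a.s.

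Finally I would conclude via Weyl's inequality: applied to $N=A/\sqrt n+B/\sqrt n$, it gives $|\sigma_k(N)-\sigma_k(A/\sqrt n)|\le\|B\|_\op/\sqrt n=o(1)$ uniformly in $k$, and since the singular values of $A/\sqrt n$ are uniformly bounded (the top one converging to $b_H$), also $\sup_k|\sigma_k(N)^2-\sigma_k(A/\sqrt n)^2|\to 0$. This transfers both the weak spectral limit and the convergence of the top eigenvalue from $A^\top A/n$ to $N^\top N$, establishing parts (1) and (2). The main technical hurdle will be the first step --- applying the generalized Marchenko-Pastur law and the no-outliers theorem to $\tilde A$, whose entries are independent but not iid --- which requires care with the standard truncation argument under the bounded fourth-moment and compact-support hypotheses. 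Everything else is a routine operator-norm perturbation, with delocalization of $u$ entering only to make $B$ asymptotically negligible.
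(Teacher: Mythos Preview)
Your proposal is correct and follows essentially the same approach as the paper: the same decomposition $\mathcal{E}^*=A+B$, the same application of the general Marchenko--Pastur law and the no-eigenvalues-outside-the-support theorem to the independent-entry matrix $A$, and the same operator-norm bound on $B$ combining delocalization of $u$, the $(4+\phi)$-moment control of $\max_i|z_i|$, and $\|E\|_{\op}=O(\sqrt n)$. Your factorization $B=\ell^{1/2}\diag(z)\,E\,\diag(\sigma_j u_j)$ is a slightly cleaner packaging of the paper's Hadamard-product argument, but the resulting bound and the Weyl-inequality transfer are identical.
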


This is proved in Sec.\ \ref{pf_noise_lem}. For brevity we write $b  = b_H$. Compared to  \cite{benaych2012singular}, the key technical innovation here is to show that the contribution of the reduced signal component $\Sigma^{1/2} E_iS_i$ to $\mathcal{E}^*$ is negligible. This is accomplished by an ad-hoc bound on the operator norm of the contribution.

Since $\tilde Y$ is a rank-one perturbation of $N$, it follows that the eigenvalue distribution of $\tilde Y^\top \tilde Y$ also converges to the MP law $F_{\gamma,H}$. This proves the first claim of Thm \ref{spike_proj_multi}. 

Moreover, since $NN^\top$ has the same $n$ eigenvalues as the nonzero eigenvalues of $N^\top N$,  the two facts in Lemma \ref{noise_lem} imply that when $t>b$, $n^{-1}\tr (t^2 I_n - NN^\top)^{-1} \to \int (t^2-x)^{-1} d\underline F_{\gamma,H}(x) = -\underline m(t^2)$. Here $\underline F_{\gamma,H}(x)  = \gamma F_{\gamma,H}(x)  + (1-\gamma)\delta_0$ and $\underline m = \underline m_{\gamma,H}$ is the Stieltjes transform of $\underline F_{\gamma,H}$. Clearly this convergence is uniform in $t$. As a special note, when $t$ is a singular value of the random matrix $N$, we formally define $(t^2 I_p - N^\top N)^{-1} =0$ and $(t^2 I_n - NN^\top)^{-1} =0$. When $t>b$, the complement of this event happens a.s. In fact, from Lemma \ref{noise_lem} it follows that $(t^2 I_p - N^\top N)^{-1}$ has a.s. bounded operator norm. Next we control the quadratic forms in the matrix $M_n$.

\begin{lem}[The quadratic forms] 
\label{qf_lem}
When $t>b$, the quadratic forms in the matrix $M_n(t)$ have the following properties:
\benum
\item $Z^\top (t^2 I_n - NN^\top)^{-1} Z - n^{-1}\tr (t^2 I_n - NN^\top)^{-1}\to 0$ a.s.
\item $Z^\top (t^2 I_n - NN^\top)^{-1} N\nu \to 0$ a.s.
\item $\nu^\top (t^2 I_p - N^\top N)^{-1} \nu \to -m(t^2)$ a.s., where $m = m_{\gamma,H}$ is the Stieltjes transform of the Marchenko-Pastur distribution $F_{\gamma,H}$. 
\eenum

Moreover the convergence of all three terms is uniform in $t>b+c$, for any $c>0$.
\end{lem}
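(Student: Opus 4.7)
I would treat the three claims separately. Parts 1 and 2 involve the random vector $Z = n^{-1/2}\tilde Z$, which is independent of $N$, and both reduce to standard concentration of quadratic/linear forms. Part 3 is the main obstacle: $\nu$ is deterministic, so concentration does not apply directly, and a deterministic-equivalent argument combined with Assumption 4 is needed.

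For part 1, the entries of $Z$ are independent with variance $1/n$ and bounded $(4+\phi)$-moments, and are independent of $N$. By Lemma \ref{noise_lem}, for $t > b+c$ the resolvent $A(t) := (t^2 I_n - NN^\top)^{-1}$ has a.s. bounded operator norm uniformly in $t$. I would apply a standard trace concentration inequality for quadratic forms (e.g.\ Lemma B.26 of Bai--Silverstein) conditionally on $N$ to get a high-moment bound on $Z^\top A(t) Z - n^{-1}\tr A(t)$ of order $n^{-(1+\phi/4)}$, and then conclude almost-sure convergence via Borel--Cantelli. Uniformity in $t$ would follow from the Lipschitz dependence of $A(t)$ on $t$ and an $\varepsilon$-net argument. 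For part 2, conditioning on $N$ makes $Z^\top A(t) N\nu$ a centered linear form in independent entries with conditional variance $n^{-1}\|A(t)N\nu\|^2 \le n^{-1}\|A(t)\|_{\op}^2\|N\|_{\op}^2\|\nu\|^2 = O(1/n)$, since $\|A(t)\|_{\op}$ and $\|N\|_{\op}$ are a.s. uniformly bounded on $t>b+c$. Standard higher-moment plus Borel--Cantelli arguments then give a.s. convergence to zero, again uniformly in $t$.

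For part 3, my strategy is first to view $N$ as $X' M^{1/2}/\sqrt n$ up to a ``signal leakage'' contribution controlled as in the proof of Lemma \ref{noise_lem}, where $M = \Gamma(\Sigma+\mu^2)$ has spectral distribution $H_p \Rightarrow H$ and $X'$ has independent standardized entries. For $z \in \mathbb{C}^+$ I would invoke a Bai--Silverstein-style deterministic equivalent for bilinear forms of sample covariance resolvents to obtain
\[
\nu^\top (N^\top N - zI_p)^{-1}\nu = -\frac{1}{z}\, \nu^\top\bigl(I_p + \underline m_{\gamma,H}(z)\,M\bigr)^{-1}\nu + o(1) \quad \text{a.s.}
\]
Writing $(I_p + \underline m(z)M)^{-1} = \underline m(z)^{-1}(M - wI_p)^{-1}$ with $w = -1/\underline m(z)\in\mathbb{C}^+$, Assumption 4 gives $\nu^\top(I_p + \underline m(z)M)^{-1}\nu \to \int (1 + t\underline m(z))^{-1}\,dH(t)$. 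A short algebraic manipulation using the Silverstein equation $-1/\underline m(z) = z - \gamma\int t(1 + t\underline m(z))^{-1}dH(t)$ together with $\underline m(z) = \gamma m_{\gamma,H}(z) + (\gamma-1)/z$ identifies this limit as $-zm_{\gamma,H}(z)$, so for $z\in\mathbb{C}^+$ we get $\nu^\top(zI_p - N^\top N)^{-1}\nu\to -m_{\gamma,H}(z)$.

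The hardest step, flagged as novel by the authors, is extending this from $\mathbb{C}^+$ to the real argument $z = t^2 > b^2$. For $t > b+c$, Lemma \ref{noise_lem} forces $\lambda_{\max}(N^\top N)$ to stay a.s. bounded away from $t^2$, so $(t^2 I_p - N^\top N)^{-1}$ has a.s. bounded operator norm and its bilinear forms are a.s. equicontinuous in $t$ on $(b+c,\infty)$; similarly $z\mapsto m_{\gamma,H}(z)$ extends continuously to $(b^2,\infty)$. A diagonal argument letting $\mathrm{Im}(z)\downarrow 0$ while exploiting this equicontinuity transfers the complex-$z$ limit to $z = t^2$, and delivers uniformity on $t\in (b+c,\infty)$, completing the proof.
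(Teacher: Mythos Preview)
Your proposal is correct and for Parts 1 and 2 matches the paper exactly: the paper also reduces from $N$ to the ``clean'' noise matrix $A$ by controlling the signal-leakage term $E^{**}$, then applies the Bai--Silverstein quadratic-form concentration lemma for Part~1 and a fourth-moment bound on the linear form plus Borel--Cantelli for Part~2. For Part~3, both you and the paper obtain the limit for $z\in\mathbb{C}^+$ via the Bai--Zhou deterministic equivalent; you spell out the Silverstein-equation algebra (and it checks out), while the paper simply cites their Theorem~1.

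The one genuine difference is how the limit is transferred from $z\in\mathbb{C}^+$ to the real point $t^2>b^2$. The paper re-examines the Bai--Zhou proof and observes that every error bound there has the shape $n^{-k}v^{-l}$, so the conclusion still holds for $v=n^{-\alpha}$ with small $\alpha$; a resolvent identity then gives $|f_n(t^2+iv)-f_n(t^2)|\le v\,\|(t^2I-A^\top A)^{-1}\|^2\to 0$. Your route instead uses that, on the a.s.\ event from Lemma~\ref{noise_lem}, the map $z\mapsto \nu^\top(zI-N^\top N)^{-1}\nu$ is (eventually) Lipschitz on $\{\mathrm{Re}\,z\ge (b+c)^2\}$ with a constant independent of $n$; combining this with a.s.\ convergence at countably many $z_k=t^2+iv_k$, $v_k\downarrow 0$, and a three-$\varepsilon$ argument gives the real-$t^2$ limit. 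Your version is more modular, using only the \emph{statement} of the deterministic equivalent at fixed $z$ rather than its internal rates; the paper's version is more quantitative but requires opening up the Bai--Zhou proof. Both deliver the same conclusion and the same uniformity in $t>b+c$.
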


This is proved in Sec.\ \ref{pf_qf_lem}.  The key technical innovation is the proof of the third part. Most results for controlling quadratic forms $x^\top A x$ are concentration bounds for random $x$. Here $x=\nu$ is fixed, and matrix $A=(t^2 I_p - N^\top N)^{-1}$ is random instead. For this reason we adopt the ``deterministic equivalents'' technique of  \cite{bai2007asymptotics} for quantities $x^\top (z I_p - N^\top N)^{-1} x$, with the key novelty that we can take the imaginary part of the complex argument to zero. The latter observation is nontrivial, and mirrors similar techniques used recently in universality proofs in random matrix theory \citep[see e.g., the review by][]{erdos2012universality}.

Lemmas \ref{noise_lem} and \ref{qf_lem} will imply that for $t>b$, the limit of $M_n(t)$ is 
$$M(t) = 
\begin{bmatrix} 
-t \cdot \underline m(t^2) & 
-(\tau\ell)^{-1/2} \\ 
-(\tau\ell)^{-1/2} &
-t \cdot m(t^2) 
\end{bmatrix}.
$$

By the Weyl inequality, $\sigma_2(\tilde Y) \le \sigma_2( (\xi\ell)^{1/2}  \cdot Z \nu^\top) + \sigma_1(N) = \sigma_1(N)$. Since $\sigma_1(N) \to b$ a.s. by Lemma \ref{noise_lem}, we obtain that  $\sigma_2(\tilde Y) \to b$ a.s.  Therefore for any $\ep>0$, a.s. only $\sigma_1(\tilde Y)$ can be a singular value of $\tilde Y$ in $(b+\ep,\infty)$ that is not a singular value of $N$.

It is easy to check that $D(x) = x\cdot\underline m(x) m(x)$ is strictly decreasing on $(b^2,\infty)$.  Hence, denoting $h = \lim_{t\downarrow b} D(t^2)$, for $\tau\ell>h$, the equation $D(t^2)=1/(\tau \ell)$ has a unique solution $t\in (b,\infty)$. By Lemma A.1 of \cite{benaych2012singular}, we conclude that for $\tau\ell>h$, $\sigma_1(\tilde Y) \to t$ a.s., where $t$ solves the equation $\det[M(t)]=0$, or equivalently, 
$$
t^2 \cdot \underline m(t^2) m(t^2)  = \frac1{\tau\ell}.
$$
If $\tau\ell \le h$, then we note that $\det[M_n(t)]\to\det[M(t)]$ uniformly on $t>b+\ep$. Therefore, if $\det[M_n(t)]$ had a root $\sigma_1(\tilde Y)$ in $(b+\ep,\infty)$, $\det[M(t)]$ would also need to have a root there, which is a contradiction. Therefore, we conclude $\sigma_1(\tilde Y) \le b+\ep$ a.s., for any $\ep>0$. Since $\sigma_1(\tilde Y) \ge \sigma_2(\tilde Y) \to b$, we conclude that $\sigma_1(\tilde Y) \to b$ a.s., as desired. This finishes the spike convergence claim in Thm.\ \ref{spike_proj_multi}.

Next, we turn to proving the convergence of the angles between the population and sample eigenvectors. Let $\hat Z$ and $\hat u$ be the singular vectors associated with the top singular value $\sigma_1(\tilde Y)$ of $\tilde Y$. Then, by Lemma 5.1 of \cite{benaych2012singular}, if $\sigma_1(\tilde Y)$ is not a singular value of $X$, then the vector $\eta = (\eta_1,\eta_2)= (u^\top \hat u, Z^\top \hat Z)$ belongs to the kernel of the matrix $M_n(\sigma_1(\tilde Y))$. By the above discussion, this 2-by-2 matrix is of course singular, so this provides one linear equation for the vector $r$ (with $R=(t^2 I_n - NN^\top)^{-1}$)
$$t \eta_1 \cdot Z^\top R Z 
+ \eta_2[ Z^\top R N \nu- (\xi \ell)^{-1/2}]=0.$$

By the same lemma cited above, it follows that we have the \emph{norm identity} (with $t = \sigma_1(\tilde Y)$)
\beq
\label{norm_id}
t^2 \eta_1^2 \cdot Z^\top R^2 Z 
+ \eta_2^2 \cdot \nu^\top N^\top R^2 N \nu 
+ 2 t \eta_1 \eta_2  \cdot Z^\top R^2 N \nu 
= (\xi \ell)^{-1}.
\eeq

This follows from taking the norm of the equation $t \eta_1 \cdot R Z +\eta_2  \cdot R N \nu = (\xi \ell)^{-1/2} \hat Z$ (see Lemma 5.1 in \cite{benaych2012singular}). We will find the limits of the quadratic forms below.

\begin{lem}[More quadratic forms] 
\label{qf_lem2}
The quadratic forms in the norm identity have the following properties:
\benum
\item $Z^\top (t^2 I_n - NN^\top)^{-2} Z - n^{-1}\tr (t^2 I_n - NN^\top)^{-2}\to 0$ a.s.
\item $Z^\top (t^2 I_n - NN^\top)^{-2} N\nu \to 0$ a.s.
\item $\nu^\top N^\top (t^2 I_n - N N^\top)^{-2} N \nu \to  m(t^2) + t^2 m'(t^2)$ a.s., where $m$ is the Stieltjes transform of the Marchenko-Pastur distribution $ F_{\gamma,H}$. 
\eenum
\end{lem}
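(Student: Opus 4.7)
The plan is to derive each part of Lemma \ref{qf_lem2} from the corresponding part of Lemma \ref{qf_lem} by differentiating in the spectral parameter $z = t^2$. The key observation is that, for any symmetric matrix $A$ and fixed vectors $x,y$, the resolvent bilinear form $z \mapsto x^\top (zI - A)^{-1} y$ is holomorphic on the complement of the spectrum of $A$, with derivative $-x^\top (zI - A)^{-2} y$. Each quantity in Lemma \ref{qf_lem2} is therefore, up to a sign and a short algebraic manipulation, the derivative of a quantity already controlled in Lemma \ref{qf_lem}.

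For Part 1, set
\[
f_n(z) = Z^\top (zI_n - NN^\top)^{-1} Z - n^{-1}\tr(zI_n - NN^\top)^{-1}.
\]
By Part 1 of Lemma \ref{qf_lem}, $f_n(t^2) \to 0$ a.s., uniformly in $t > b + c$ for every $c > 0$. For each $n$, the function $f_n$ is holomorphic on $\mathbb{C}\setminus\mathrm{spec}(NN^\top)$, and by Lemma \ref{noise_lem} the spectrum is a.s.\ eventually contained in a small neighborhood of $[0,b^2]$. Fix any complex open connected neighborhood $\Omega$ of the real interval $((b+c)^2,\infty)$ whose closure stays at positive distance from $[0,b^2]$. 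On compact subsets of $\Omega$ the standard resolvent estimate $\|(zI_n-NN^\top)^{-1}\|_{\op}\le 1/\mathrm{dist}(z,\mathrm{spec}(NN^\top))$ together with $\|Z\|^2\to 1$ a.s.\ gives an a.s.\ uniform bound on $|f_n(z)|$. Vitali's convergence theorem then promotes the a.s.\ convergence on the real interval to a.s.\ locally uniform convergence on $\Omega$, and termwise differentiation yields $f_n'(z)\to 0$ a.s. Since $-f_n'(t^2)$ is precisely the quantity in Part 1, this settles it. Part 2 follows by the same argument applied to $z\mapsto Z^\top(zI_n-NN^\top)^{-1}N\nu$, whose a.s.\ limit is zero by Part 2 of Lemma \ref{qf_lem}.

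For Part 3, an algebraic step comes first. Using $N^\top(zI_n - NN^\top)^{-1} = (zI_p - N^\top N)^{-1} N^\top$ twice gives
\[
N^\top(zI_n - NN^\top)^{-2} N = (zI_p - N^\top N)^{-2}\, N^\top N.
\]
Writing $N^\top N = zI_p - (zI_p - N^\top N)$ and expanding, the expression in Part 3 equals
\[
z\cdot \nu^\top (zI_p - N^\top N)^{-2}\nu \;-\; \nu^\top (zI_p - N^\top N)^{-1}\nu.
\]
By Part 3 of Lemma \ref{qf_lem}, the second term tends a.s.\ to $m(z)$. Applying the Vitali argument above to $z\mapsto \nu^\top(zI_p - N^\top N)^{-1}\nu$, whose a.s.\ limit is $-m(z)$, differentiation gives $\nu^\top(zI_p - N^\top N)^{-2}\nu \to m'(z)$ a.s. Setting $z=t^2$ yields the stated limit $m(t^2) + t^2 m'(t^2)$.

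The only nontrivial step is the justification of termwise differentiation, i.e.\ the passage from each convergence in Lemma \ref{qf_lem} to the convergence of its derivative. Once the a.s.\ local uniform boundedness of the holomorphic resolvent forms on $\Omega$ is in place, Vitali's theorem does the rest. That boundedness reduces to the standard resolvent estimate together with the a.s.\ upper-edge control of $NN^\top$ provided by Lemma \ref{noise_lem}, so no new concentration or combinatorial inputs are required beyond those already developed for Lemma \ref{qf_lem}.
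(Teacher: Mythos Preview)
Your proof is correct, and for Part 3 it is essentially identical to the paper's: the same algebraic identity reducing to $-\nu^\top(t^2I_p-N^\top N)^{-1}\nu + t^2\,\nu^\top(t^2I_p-N^\top N)^{-2}\nu$, followed by the same derivative-of-analytic-functions argument (the paper invokes Lemma~2.14 of Bai--Silverstein, which is the same normal-families statement you call Vitali).

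The genuine difference is in Parts 1 and 2. The paper does \emph{not} differentiate there; it simply re-runs the concentration arguments from Lemma~\ref{qf_lem} verbatim with $(t^2I_n-NN^\top)^{-2}$ in place of $(t^2I_n-NN^\top)^{-1}$, observing that the only properties used were independence from $Z$ and a.s.\ bounded operator norm, both of which hold for the squared resolvent. Your route---differentiate the already-established limits via Vitali---is equally valid and arguably more elegant, since it handles all three parts by one mechanism and requires no new concentration input. The paper's route is marginally more direct for Parts 1 and 2 (a one-line citation of Lemma~\ref{quad_form} and the fourth-moment Markov bound), but your unified argument makes the structural reason for Lemma~\ref{qf_lem2} clearer: everything is the $z$-derivative of Lemma~\ref{qf_lem}. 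One small point worth making explicit in your write-up is the passage from ``for each $t$, a.s.'' to ``a.s., for all $t$'': this is handled either by the uniform-in-$t$ convergence stated at the end of Lemma~\ref{qf_lem}, or by restricting to a countable dense set of $t$'s and using the identity theorem for the holomorphic limit.
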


The proof is in Sec.\ \ref{pf_qf_lem2}. Again, the key novelty is the proof of the third claim. The standard concentration bounds do not apply, because $u$ is non-random. Instead, we use an argument from complex analysis constructing a sequence of functions $f_n(t)$ such that their derivatives are $f_n'(t) = \nu^\top N^\top (t^2 I_n - N N^\top)^{-2} N \nu$, and deducing the convergence of $f_n'(t)$ from that of $f_n(t)$. 

Lemma \ref{qf_lem2} implies that $n^{-1}\tr (t^2 I_n - NN^\top)^{-2} \to \int (t^2-x)^{-2} d\underline F_{\gamma,H}(x) = \underline m'(t^2)$ for $t>b$.  Solving for $\eta_1$ in terms of $\eta_2$ from the first equation, plugging in to the second, and taking the limit as $n \to \infty$, we obtain that $\eta_2^2 \to c_2$, where 

$$c_2  \left(\frac{\underline m'(t^2)}{\tau\ell\underline m(t^2)^2} + m(t^2)+t^2 m'(t^2)\right) = \frac{1}{\tau\ell}.$$

Using $D(x) = x\cdot m(x)\underline m(x)$, we find $c_2 =  \underline m(t^2)/[D'(t^2)\tau \ell]$, where
$t$ solves \eqref{sv_eq}.
From the first equation, we then obtain  $\eta_1^2 \to c_1$, where $c_1 =  m(t^2)/[D'(t^2)\tau \ell]$, where
$t$ is as above.
This finishes the proof of Thm.\ \ref{spike_proj_multi} in the single-spiked case. The proof of the multispiked case is a relatively simple extension of the previous argument, so we present it in Sec.\ \ref{pf_multi}.

\subsubsection{Proof of Lemma \ref{noise_lem}}
\label{pf_noise_lem}
 Recall that $N = n^{-1/2}\mathcal{E}^*$, where $\mathcal{E}^*$ has rows $\ep^*_i =\mu\ep_i +\Sigma^{1/2} E_iX_i$. 
Note 
$$\ep^*_{ij} 
=\mu_{j} \ep_{ij} + \sigma_j E_{ij}X_{ij} 
= [\mu_{j} + \sigma_j  E_{ij}]\ep_{ij} +\sigma_j  \ell^{1/2} E_{ij}z_i u_j.$$

Since $\ep_i = \Gamma^{1/2} \alpha_i$, the terms $a_{ij} =  [\mu_{j} + \sigma_j  E_{ij}]g_j \ep_{ij}$ are independent random variables with variance $g_j^2 \E D_{ij}^2 =g_j^2(\mu_j^2+\sigma_j^2)$. Recall that we assumed that the distribution $H_p$ of $g_j^2 \E D_{ij}^2$ converges weakly to the distribution $H$. 

Hence the eigenvalue distribution of the matrix $A^\top A$, where $A = (n^{-1/2}a_{ij})_{ij}$, converges to Marchenko-Pastur distribution $F_{\gamma,H}$ \citep[][Thm.\ 4.3]{bai2009spectral}. Moreover, since $\smash{\E{\alpha_{ij}^4}<\infty}$ and $\smash{\E D_{ij}^4<\infty}$, we have $\smash{\E{a_{ij}^4}<\infty}$. In addition, by assumption $\smash{\sup g_j^2 \E D_{ij}^2 \to \sup \,\text{supp}(H)}$. Thus the largest eigenvalue of $A^\top A$ converges a.s. to the upper edge $b^2$ of the support of $F_{\gamma,H}$, see \cite{bai1998no} and \citep[][Cor.\ 6.6]{bai2009spectral}. 

Therefore, since $\sigma_j$ are bounded, it is enough to show that the operator norm of the error matrix $E^{**}$ with entries $n^{-1/2}E_{ij}z_i u_j$ converges to zero a.s. This will ensure that $N=A+E^{**}$ has the same two properties as $A$ above, namely its ESD and operator norm converge.

 Now, denoting by $\odot$ elementwise products
$$\|E^{**}\| = \sup_{\|a\|=\|c\|=1} a^\top E^{**} c = n^{-1/2} \sup_{\|a\|=\|c\|=1}  (a \odot z)^\top E (c \odot u). $$
We have $\|a \odot z\| \le \|a\|  \max |z_i| = \max |z_i|$ and $\|c \odot u\| \le \|c\|  \max |u_i|=\max |u_i|$, hence
$$\|E^{**}\| \le  \|E\| \max |z_i| \max |u_i|.$$

Since $z$ has iid standardized entries, and $C:=\E z_i^{4+\phi}<\infty$, we can derive that 
$$Pr(\max |z_i|\ge a) \le \E \max |z_i|^{4+\phi}/a^{4+\phi} \le nC/a^{4+\phi}.$$
Taking $a = n^{1/2-\phi'}$ for $\phi'$ small enough, we obtain, $\max |z_i|\le n^{1/2-\phi'}$ a.s.

 However, since $E_{ij}$ are iid standardized random variables with bounded 4-th moment, $\|E\|\to 1+\sqrt{\gamma}$ a.s. \citep{bai2009spectral}. Since $\| u\|_{\infty} \le C \log(p)^B/p^{1/2}$, we obtain $\|E^{**}\| \le C(1+\sqrt{\gamma}) \cdot  \log(p)^B n^{1/2-\phi'} p^{-1/2}\to 0 $ a.s., as required.

\subsubsection{Proof of Lemma \ref{qf_lem}}
\label{pf_qf_lem}
Since $N=A+E^{**}$, and $\|E^{**}\|\to0$ a.s., it is enough to show the same concentration statements for $A$ instead of $N$. Indeed, it is easy to see that the error terms are all negligible.

\textbf{Part 1}: 
For $Z^\top (t^2 I_n - AA^\top)^{-1} Z$, note that $Z$ has iid entries ---with mean 0 and variance $1/n$---that are independent of $A$. We will use the following result: 

\begin{lem}[Concentration of quadratic forms, consequence of Lemma B.26 in \citet{bai2009spectral}] Let $x \in \RR^k$ be a random vector with i.i.d. entries and $\EE{x} = 0$, for which $\EE{(\sqrt{k}x_i)^2} = 1$ and $\smash{\sup_i \EE{(\sqrt{k}x_i)^{4+\phi}}}$ $ < C$ for some $\phi>0$ and $C <\infty$. Moreover, let $A_k$ be a sequence of random $k \times k$ symmetric matrices independent of $x$, with a.s. uniformly bounded eigenvalues. Then the quadratic forms $x^\top A_k x $ concentrate around their means:  $\smash{x^\top A_k x - k^{-1} \tr A_k \rightarrow_{a.s.} 0}$.
\label{quad_form}
\end{lem}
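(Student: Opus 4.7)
The plan is to derive this lemma as a direct moment-and-Borel--Cantelli consequence of Lemma B.26 of \cite{bai2009spectral}, which for an $n$-vector $y$ with iid centered unit-variance entries and an \emph{independent} (possibly random) $n \times n$ matrix $B$ gives
\[
\E\bigl[\,\lvert y^\top B y - \tr B\rvert^{p}\,\big|\,B\bigr]
  \le C_p\Bigl(\bigl(\nu_4 \tr(BB^*)\bigr)^{p/2}
              + \nu_{2p}\,\tr(BB^*)^{p/2}\Bigr),
\]
where $\nu_q = \E|y_1|^q$.

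First I would rescale and set $y_i = \sqrt{k}\,x_i$, so that $y$ has iid centered entries with unit variance and $\sup_i \E|y_i|^{4+\phi} < C$, and the claim becomes
\[
k^{-1}\bigl(y^\top A_k y - \tr A_k\bigr) \to 0 \quad \text{a.s.}
\]
Because $A_k$ is random but independent of $x$, I would apply B.26 \emph{conditionally} on $A_k$ with $p = 2 + \phi/2$ (so that $2p = 4 + \phi$ and the required moment $\nu_{2p}$ is finite by hypothesis). Let $\Omega_0$ be the almost sure event on which $\|A_k\|_{\op} \le M$ uniformly in $k$; on $\Omega_0$ we have $\tr(A_k^2) \le k M^2$ and $\tr(A_k^2)^{p/2} \le k M^{p}$, so the conditional bound yields
\[
\E\bigl[\,\lvert y^\top A_k y - \tr A_k\rvert^{p}\,\big|\,A_k\bigr]
  \le C_p'\bigl(k^{p/2} + k\bigr) \le C_p''\,k^{p/2}
\]
on $\Omega_0$. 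Dividing through by $k^p$,
\[
\E\Bigl[\,\bigl|k^{-1}(y^\top A_k y - \tr A_k)\bigr|^{p}\,\Big|\,A_k\Bigr]
  \le C_p''\,k^{-p/2}
  = C_p''\,k^{-1-\phi/4}.
\]

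Next, for every fixed $\ep>0$, Markov's inequality and the tower property give
\[
\Pr\!\left(\bigl|k^{-1}(y^\top A_k y - \tr A_k)\bigr| > \ep\right)
  \le \ep^{-p}\,C_p''\,k^{-1-\phi/4},
\]
whose sum over $k$ is finite since $\phi>0$. The Borel--Cantelli lemma then delivers $k^{-1}(y^\top A_k y - \tr A_k) \to 0$ almost surely on $\Omega_0$, and hence almost surely, which is the desired statement.

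The only mildly delicate point is the handling of the randomness of $A_k$: the B.26 bound is usually stated for deterministic $B$, so one must condition on $A_k$ and then invoke the a.s.\ uniform bound on $\|A_k\|_{\op}$ before taking expectations. Once the exponent $p = 2 + \phi/2$ is chosen so that $k^{-p/2} = k^{-1-\phi/4}$ is summable, the rest is routine; the strict inequality $\phi>0$ is exactly what is needed to push the $p$-th moment bound past the Borel--Cantelli threshold.
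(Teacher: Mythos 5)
Your proof is correct and follows exactly the route the paper intends: the paper states this lemma as a direct consequence of Lemma B.26 of Bai and Silverstein without spelling out the details, and your derivation (conditioning on $A_k$, taking $p=2+\phi/2$ so that $\nu_{2p}=\nu_{4+\phi}<\infty$ and the bound $C_p''k^{-p/2}=C_p''k^{-1-\phi/4}$ is summable, then Markov and Borel--Cantelli on the event where $\|A_k\|_{\op}$ is uniformly bounded) is precisely the standard argument being invoked. No gaps.
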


 We apply this lemma with $x = Z$, $k=p$ and $A_p =  (t^2 I_n - AA^\top)^{-1}$. To get almost sure convergence, here it is required that $z_i$ have finite $4+\phi$-th moment. This shows the concentration of $Z^\top (t^2 I_n - AA^\top)^{-1} Z$. 

\textbf{Part 2}: 
To show $Z^\top (t^2 I_n - AA^\top)^{-1} A\nu$ concentrates around  0, we note that $w=(t^2 I_n - AA^\top)^{-1} A\nu$ is a random vector independent of $Z$, with a.s. bounded norm. Hence, conditional on $w$: 
\begin{align*}
Pr(|Z^\top w|\ge a|w) &\le  a^{-4} \E|Z^\top w|^4 = a^{-4} [\sum_{i} \E Z_{ni}^4w_i^4 +\sum_{i\neq j} \E Z_{ni}^2  \E Z_{nj}^2w_i^2w_j^2] \\
&\le a^{-4}  \E Z_{n1}^4 (\sum_i w_i^2)^2 = a^{-4} n^{-2} \E Z_{1}^4 \cdot \|w\|_2^4
\end{align*}
For any $C$ we can write
$$Pr(|Z^\top w|\ge a) \le Pr(|Z^\top w|\ge a|\|w\|\le C) +Pr(\|w\|> C).$$
For sufficiently large $C$, the second term, $Pr(\|w\|>C)$ is summable in $n$. By the above bound, the first term is summable for any $C$. Hence, by the Borel-Cantelli lemma, we obtain $|Z^\top w|\to 0$ a.s. This shows the required concentration.

\textbf{Part 2}: 
Finally we need to show that $\nu^\top (t^2 I_p - A^\top A)^{-1} \nu$ concentrates around  a definite value. This is probably the most interesting part, because the vector $u$ is not random. Most results for controlling expressions of the above type are designed for random $u$; however here the matrix $A$ is random instead. For this reason we will adopt a different approach.
 
 Under our assumption we have $\nu^\top(\Gamma(\Sigma+\mu^2) -zI_p)^{-1} \nu \to m_H(z)$, for $z = t^2 + i v$ with $v>0$ fixed. Therefore, Thm 1 of \cite{bai2007asymptotics} shows that $\nu^\top (z I_p - A^\top A)^{-1} \nu \to - m(z)$
a.s., where $m(z)$ is the Stieltjes transform of the Marchenko-Pastur distribution $F_{\gamma,H}$. 

A close examination of their proofs reveals that their result holds when $v \to 0$ sufficiently slowly, for instance $v = n^{-\alpha}$ for $\alpha = 1/10$.  The reason is that all bounds in the proof have the rate $N^{-k}v^{-l}$ for some small $k,l>0$, and hence they converge to 0 for $v$ of the above form. 

 For instance, the very first bounds in the proof of Thm 1 of \cite{bai2007asymptotics} are in Eq. (2.2) on page 1543. The first one states a bound of order $O(1/N^r)$. The inequalities leading up to it show that the bound is in fact  $O(1/(N^r v^{2r}))$. Similarly, the second inequality, stated with a bound of order $O(1/N^{r/2})$ is in fact  $O(1/(N^{r/2} v^{r}))$. These bounds go to zero when $v = n^{-\alpha}$ with small $\alpha>0$. In a similar way, the remaining bounds in the theorem have the same property.

To get the convergence for real $t^2$ from the convergence for complex $z = t^2 + iv$, we note that 
\begin{align*}
|\nu^\top (z I_p - A^\top A)^{-1} \nu - \nu^\top (t^2 I_p - A^\top A)^{-1} \nu| & 
= v |\nu^\top (z I_p - A^\top A)^{-1} (t^2 I_p - A^\top A)^{-1} \nu| \le \\
&\le v \|(t^2 I_p - A^\top A)^{-1}\|^2 \cdot u^\top u.
\end{align*}
As discussed above, when $t>b$, the matrices  $(t^2 I_p - A^\top A)^{-1}$ have a.s. bounded operator norm. Hence, we conclude that if $v\to0$, then a.s.
$$\nu^\top (z I_p - A^\top A)^{-1} \nu - \nu^\top (t^2 I_p - A^\top A)^{-1} \nu \to0.$$

 Finally, $m(z) \to m(t^2)$ by the continuity of the Stieltjes transform for all $t^2>0$ \citep{bai2009spectral}. We conclude that $\nu^\top (t^2 I_p - A^\top A)^{-1} \nu\to -m(t^2)$ a.s. This finishes the analysis of the last quadratic form. 

\subsubsection{Proof of Lemma \ref{qf_lem2}}
\label{pf_qf_lem2}
As in Lemma \ref{qf_lem}, it is enough to show the same concentration statements for $A$ instead of $N$.

\textbf{Parts 1 and 2}: 
The proof of Part 1 and 2 are exactly analogous to those in Lemma \ref{qf_lem}. Indeed, the same arguments work despite the change from $(t^2 I_p - N^\top N)^{-1}$ to $(t^2 I_p - N^\top N)^{-2}$, because the only properties we used are its independence from $Z$, and its a.s. bounded operator norm. These also hold for $(t^2 I_p - N^\top N)^{-2}$, so the same proof works. 

\textbf{Part 3}: 
We start with the identity $\nu^\top N^\top (t^2 I_n - N N^\top)^{-2} N \nu  = - \nu^\top (t^2 I_p - N^\top N)^{-1} \nu + t^2 \nu^\top (t^2 I_p - N^\top N)^{-2} u$. Since in Lemma \ref{qf_lem} we have already established $\nu^\top (t^2 I_p - N^\top N)^{-1} \nu \to -m(t^2)$, we only need to show the convergence of  $\nu^\top (t^2 I_p - N^\top N)^{-2} u$. 

For this we will employ the following \emph{derivative trick} \citep[see e.g.,][]{dobriban2015high}. We will construct a function with two properties: (1) its derivative is the quantity $\nu^\top (t^2 I_p - N^\top N)^{-2} u$ that we want, and (2) its limit is convenient to obtain. The following lemma will allow us to get our answer by interchanging the order of limits: 
\begin{lem}[see Lemma 2.14 in \cite{bai2009spectral}] Let $f_1, f_2,\ldots $ be analytic on a domain $D$ in the complex plane, satisfying $|f_n(z)| \le M$ for every $n$ and $z$ in $D$. Suppose that there is an analytic function $f$ on $D$ such that $f_n(z) \to f(z)$ for all $z \in D$. Then it also holds that $f_n'(z) \to f'(z)$ for all $z \in D$.
\label{holo_derivative_conv}
\end{lem}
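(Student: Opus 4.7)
The plan is to deduce this from two classical ingredients of complex analysis: Vitali's convergence theorem and the Cauchy integral formula for derivatives. The statement is really a standard fact about normal families of holomorphic functions, so the proof will be short.

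First, I would upgrade the pointwise convergence $f_n(z) \to f(z)$ to \emph{locally uniform} convergence on $D$. The uniform bound $|f_n(z)| \le M$ on $D$ shows that $\{f_n\}$ is a locally uniformly bounded family of holomorphic functions, hence a normal family by Montel's theorem. Vitali's theorem then says: if a normal family converges pointwise on a set with an accumulation point in $D$ (here, the whole domain), the convergence is in fact locally uniform on $D$, and the limit is holomorphic.

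Second, I would pass from locally uniform convergence of $f_n$ to pointwise convergence of $f_n'$ via the Cauchy integral formula. Fix $z_0 \in D$ and choose $r > 0$ small enough that the closed disc $\overline{B(z_0,r)} \subset D$. For each $n$, the Cauchy integral formula for the derivative gives
\[
f_n'(z_0) - f'(z_0) \;=\; \frac{1}{2\pi i} \oint_{|\zeta - z_0| = r} \frac{f_n(\zeta) - f(\zeta)}{(\zeta - z_0)^2}\, d\zeta,
\]
and hence
\[
|f_n'(z_0) - f'(z_0)| \;\le\; \frac{1}{r}\, \sup_{|\zeta - z_0| = r} |f_n(\zeta) - f(\zeta)|.
\]
The circle $\{|\zeta - z_0| = r\}$ is a compact subset of $D$, so by the locally uniform convergence established in the first step, the right-hand side tends to $0$ as $n \to \infty$. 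Therefore $f_n'(z_0) \to f'(z_0)$, which is what we wanted.

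There is no substantive obstacle; the main content is simply invoking Vitali/Montel correctly to promote pointwise to locally uniform convergence, after which the Cauchy-integral bound is routine. One stylistic choice is whether to cite Vitali as a black box or to re-derive it via a normal-family plus subsequence argument; for a reference-style lemma within this paper I would simply cite Vitali's theorem and leave it at that.
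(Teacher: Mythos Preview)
Your proof is correct and entirely standard. Note, however, that the paper does not actually prove this lemma: it is quoted verbatim as Lemma 2.14 from \cite{bai2009spectral} and used as a black box in the proof of Lemma~\ref{qf_lem2}, Part~3. So there is no ``paper's own proof'' to compare against; your argument via Montel/Vitali plus the Cauchy integral formula is exactly the classical route and would be an appropriate justification if one were required.
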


Accordingly, consider the function $f_p(r) = - \nu^\top (r I_p - N^\top N)^{-1} \nu$. Its derivative is $f_p'(r) = \nu^\top (r I_p - N^\top N)^{-2} u$. Let $\mathcal {S} : = \{x+ iv: x>b+\ep\}$ for a sufficiently small $\ep>0$, and let us work on the set of full measure where $\|N^\top N\|<b+\ep/2$ eventually, and where $f_p(r)\to m(r)$. By inspection, $f_p$ are analytic functions on $\mathcal {S}$ bounded as $|f_p|\le 2/\ep$. Hence, by Lemma \ref{holo_derivative_conv}, $f_p'(r)\to m'(r)$.

In conclusion, $\nu^\top N^\top (t^2 I_p - N^\top N)^{-2} N \nu \to m(t^2) + t^2 m'(t^2)$, finishing the proof.

\subsubsection{Proof of Thm.\ \ref{spike_proj_multi}: Model \eqref{po_def2}}
\label{sec_po_def2}

The proof for model \eqref{po_def2} is very similar to that for model \eqref{po_def}. Therefore, we only present the outline. Working again in the single-spiked case for simplicity, we have the following decomposition for the observations $Y_i$:
\begin{align*}
Y_i &=  (\mu + \Sigma^{1/2} E_i)S_i +\ep_i \\
&=\ell^{1/2} z_i \mu u + [\mu E_i S_i +\ep_i].
\end{align*}
Denoting $\nu=\mu u/\xi^{1/2}$, where $\xi = \|\mu  u\|^2 \to \tau$, and $\ep^*_i =\mu E_i S_i +\ep_i$, we have in matrix form
\beqs
Y = (\xi \ell)^{1/2} \tilde Z \nu^\top + \mathcal{E}^*.
\eeqs
As in the proof of Lemma \ref{noise_lem}, it is not hard to see that the operator norm $\|\mathcal{E}^*-\mathcal{E}\|_{\op}\to 0$. Therefore, the spectral properties of $Y$ are equivalent to those of $\tilde Y =  (\xi \ell)^{1/2} \tilde Z \nu^\top + \mathcal{E}$. However, this is now a spiked model where the signal component is independent of the noise component. 

It follows immediately from Thm 4.3 of \cite{bai2009spectral} that the singular value distribution of $\mathcal{E} = [\alpha_1,\ldots,\alpha_n]^\top \Gamma^{1/2}$ converges to the general Marchenko-Pastur distribution $F_{\gamma,G}$, where $G$ is the limit of the distributions $G_p$ of $g_j^2$, $j=1,\ldots,p$. Similarly the top singular value of $\mathcal{E}$ converges to the upper edge $b_G$ of $F_{\gamma,G}$.

Moreover, it also follows that the analogues of Lemmas \ref{qf_lem} and \ref{qf_lem2} hold in our case. Indeed, the same arguments carry through, because the same assumptions hold. This allows the entire argument from Sec.\ \ref{setup} to carry through, finishing the single-spiked case of Thm.\ \ref{spike_proj_multi}. The extension to the multispiked case is analogous to that in model \eqref{po_def}. 

\subsubsection{Numerical computation of the quantities from Thm.\ \ref{spike_proj_multi}}
\label{spectrode} 
{\sc Spectrode} computes the Marchenko-Pastur forward map: given an input limit population spectrum $H$ and an aspect ratio $\gamma$, it outputs an accurate numerical approximation to the limit empirical spectral distribution (ESD) $F_{\gamma,H}$. \cite{dobriban2015efficient} established the numerical convergence of the method, and showed in experiments that it is much faster than previous proposals. The method is publicly available at \url{http://github.com/dobriban/eigenedge}.

The output of {\sc Spectrode} includes a numerical approximation $\hat m$ to the Stieltjes transform $m$ of the limit ESD, computed over a dense grid $x_i$ on the real line. It also includes an approximation $\hat b^2$ of the upper edge $b^2$ of the ESD. From this, we compute an approximation of the $D$-transform as $\hat D(x_i) = x_i \cdot \hat m(x_i) \cdot \underline{\hat m}(x_i)$, where $\underline{\hat m}(x_i) =  \gamma \cdot \hat m(x_i)+(\gamma-1)/x_i$. Since $D$ is monotone decreasing on $(b^2,\infty)$, we find the smallest grid point $x_i$ such that $\hat D(x_i) \le 1/\ell$ to approximately compute $D^{-1}(1/\ell)$.

Finally, the derivative $\underline m'(x)$ can be expressed as a function $\underline  m'(x)  = \mathcal{F}(\underline m(x))$ by differentiating the Marchenko-Pastur fixed-point equation \citep[see e.g.,][]{dobriban2015efficient}. Therefore, we compute a numerical approximation to $D'(x) = m(x)\cdot \underline m(x) + x[m(x)\cdot \underline m'(x) + m'(x)\cdot \underline m(x)]$ by approximating $\underline{\hat m}'(x)$ via the same function $\underline{\hat m}'(x) = \mathcal{F}(\underline{\hat m}(x))$. Similarly we approximate $\hat m'$. With these steps, we obtain a full numerical implementation of Thm.\ \ref{spike_proj_multi}.

\subsection{Proof of Thm.\ \ref{spike_proj_multi} - Multispiked extension}
\label{pf_multi}

Let us denote by $u_i=\mu u_i/\xi_i^{1/2}$ the normalized reduced signals, where $\xi_i = \|\mu  u_i\|^2 \to \tau_i$, and by $\ep^*_i = \mu\ep_i +\Sigma^{1/2} E_iX_i$. For the proof we start as in Sec.\ \ref{setup}, obtaining  $Y_i =  \sum_{k=1}^r (\xi_k \ell_k)^{1/2} z_{ik} \nu_k +\ep^*_i.$  Defining the $r\times r$ diagonal matrices $L$, $\Delta$ with diagonal entries $\ell_k$, $\xi_k$ (respectively), and the $n \times r$, $p \times r$ matrices $Z ,\mathcal{V}$, with columns $Z_k = n^{-1/2}(z_{1k}, \ldots, z_{nk})^\top$ and $u_k$ respectively, we have  
$$\tilde Y =  Z (\Delta L)^{1/2}U^\top + \mathcal{E}^*.$$
The matrix $M_n(t)$ is now $2r\times 2r$, and has the form
$$M_n(t) = 
\begin{bmatrix} 
t \cdot Z^\top (t^2 I_n - NN^\top)^{-1} Z & 
Z^\top (t^2 I_n - NN^\top)^{-1} N\mathcal{V} \\ 
\mathcal{V}^\top N^\top (t^2 I_n - NN^\top)^{-1} Z & 
t \cdot \mathcal{V}^\top (t^2 I_p - N^\top N)^{-1} \mathcal{V}
\end{bmatrix} 
-  
\begin{bmatrix} 0_r & (\Delta L)^{-1/2} \\  (\Delta L)^{-1/2} & 0_r \end{bmatrix}.$$
It is easy to see that Lemma \ref{noise_lem} still holds in this case. To find the limits of the entries of $M_n$, we need the following additional statement. 

\begin{lem}[Multispiked quadratic forms] 
\label{qf_lem3}
The quadratic forms in the multispiked case have the following properties for $t>b$:
\benum
\item $Z_k^\top R^\alpha Z_j \to 0$ a.s. for $\alpha=1,2$, if $k\neq j$.
\item $\nu_k^\top (t^2 I_p - N^\top N)^{-\alpha} \nu_j \to 0$ a.s. for $\alpha=1,2$, if $k\neq j$.
\eenum
\end{lem}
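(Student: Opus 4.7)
My plan is to follow the same template as the proofs of Lemmas \ref{qf_lem} and \ref{qf_lem2}, but exploit the cross-index structure ($k \neq j$) of the quadratic forms. First, exactly as in Sec.\ \ref{pf_noise_lem}, I would decompose the multispiked noise matrix as $N = A + E^{**}$, where $A$ has $(i,j)$-entry $[\mu_j + \sigma_j E_{ij}] g_j \alpha_{ij}$ and thus is \emph{independent} of the score matrix $Z = (Z_1,\ldots,Z_r)$, while $E^{**}$ collects the spike-dependent terms $\sum_k \ell_k^{1/2} z_{ik}\,\sigma_j E_{ij} u_{k,j}$. The same delocalization/moment bound used in Sec.\ \ref{pf_noise_lem}, applied term by term in $k$ and using $\max_i|z_{ik}| \le n^{1/2 - \phi'}$ and $\|u_k\|_\infty \le C\log(p)^B/p^{1/2}$, shows $\|E^{**}\|_{\op} \to 0$ a.s. Consequently, for $t > b$, $(t^2 I_n - NN^\top)^{-\alpha}$ and $(t^2 I_n - AA^\top)^{-\alpha}$ differ in operator norm by $o(1)$, and it suffices to prove both claims with $N$ replaced by $A$.

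For part 1, write $R_A = (t^2 I_n - AA^\top)^{-1}$ and set $w = R_A^\alpha Z_j$. Because $A$ is independent of $Z$, and $Z_k$ is independent of $(A, Z_j)$, the vector $w$ is independent of $Z_k$ and has a.s.\ bounded norm (by Lemma \ref{noise_lem} the operator norm of $R_A^\alpha$ is eventually bounded). Conditioning on $w$ and applying the same fourth-moment estimate as in part 2 of Lemma \ref{qf_lem},
\[
\Pr(|Z_k^\top w| \ge a \mid w) \le a^{-4} n^{-2} \,\E Z_{1k}^4 \cdot \|w\|^4,
\]
so a Borel--Cantelli argument splitting on the event $\{\|w\| \le C\}$ yields $Z_k^\top R_A^\alpha Z_j \to 0$ a.s., for both $\alpha = 1, 2$.

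For part 2, the main obstacle is that the vectors $\nu_k, \nu_j$ are deterministic, so we cannot use concentration in $\nu$. Instead I would reduce to the diagonal case by the polarization identity
\[
2\,\nu_k^\top (t^2 I_p - A^\top A)^{-1} \nu_j
  = (\nu_k + \nu_j)^\top (t^2 I_p - A^\top A)^{-1} (\nu_k + \nu_j)
    - \nu_k^\top (\cdot) \nu_k - \nu_j^\top (\cdot) \nu_j,
\]
and then apply the Bai--Zhou-style argument from part 3 of Lemma \ref{qf_lem} to each of the three quadratic forms. The hypothesis needed is the corresponding deterministic equivalent for the population resolvent. By Assumption 4, $\nu_k^\top (M - zI_p)^{-1} \nu_k \to m_H(z)$ and $\nu_k^\top (M - zI_p)^{-1} \nu_j \to 0$ for $k \neq j$, and letting $|z| \to \infty$ in Assumption 4 gives $u_j^\top \mu^2 u_k \to \tau_k I(j=k)$, so $\|\nu_k + \nu_j\|^2 \to 2$ and
\[
(\nu_k+\nu_j)^\top (M-zI_p)^{-1} (\nu_k+\nu_j)\to 2 m_H(z).
\]
Thus the normalized vector $\tilde v_n = (\nu_k+\nu_j)/\|\nu_k+\nu_j\|$ satisfies $\tilde v_n^\top (M - zI_p)^{-1} \tilde v_n \to m_H(z)$, and Thm.\ 1 of \cite{bai2007asymptotics} (together with the $v \to 0$ extension used in Sec.\ \ref{pf_qf_lem}) gives $\tilde v_n^\top (t^2 I_p - A^\top A)^{-1} \tilde v_n \to -m(t^2)$. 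Combining the three limits via polarization, the bilinear form converges to $\tfrac12(-2m(t^2) + m(t^2) + m(t^2)) = 0$ a.s. Finally, for $\alpha = 2$ I would invoke the derivative trick: the analytic functions $f_p(r) = -\nu_k^\top(r I_p - A^\top A)^{-1}\nu_j$ on $\{\mathrm{Re}\,r > b^2 + \varepsilon\}$ are uniformly bounded on compacts and converge to $0$, so by Lemma \ref{holo_derivative_conv} their derivatives $f_p'(r) = \nu_k^\top(rI_p - A^\top A)^{-2}\nu_j$ also converge to $0$, giving the $\alpha = 2$ case and completing the proof.
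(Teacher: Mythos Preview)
Your proof is correct. Part 1 is essentially the paper's argument: the paper simply cites Proposition~4.1 of \cite{couillet2011random} (concentration of bilinear forms $x_n^\top A_n y_n$ for independent $x_n,y_n$), while you unpack that proposition via conditioning on $w = R_A^\alpha Z_j$ and a fourth-moment/Borel--Cantelli bound. These are the same idea.

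For Part 2 you take a genuinely different route. The paper argues that the deterministic-equivalent machinery of \cite{bai2007asymptotics} used in Lemma~\ref{qf_lem}, Part~3, extends verbatim to the \emph{bilinear} form $\nu_k^\top(zI_p - A^\top A)^{-1}\nu_j$, producing a deterministic equivalent that vanishes by Assumption~4 with $j\neq k$; the case $\alpha=2$ then follows by the derivative trick. You instead reduce the bilinear form to three \emph{quadratic} forms via polarization, apply the already-established Lemma~\ref{qf_lem}, Part~3, to each, and combine. Your route is more self-contained---it avoids re-opening the Bai--Zhou argument for off-diagonal test vectors---at the cost of needing the auxiliary fact $\nu_k^\top \nu_j \to 0$ to control $\|\nu_k+\nu_j\|^2$. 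Your extraction of this from Assumption~4 by ``letting $|z|\to\infty$'' is correct but terse: the clean justification is that $-z\,u_j^\top\mu(M-zI_p)^{-1}\mu u_k = u_j^\top\mu^2 u_k + O(|z|^{-1})$ uniformly in $n$ (since the diagonal entries of $M$ are uniformly bounded), so for fixed large $|z|$ one gets $\limsup_n |u_j^\top\mu^2 u_k| \le C/|z|$, and then lets $|z|\to\infty$. Both approaches use the same derivative trick for $\alpha=2$.
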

This lemma is proved in Sec.\ \ref{pf_qf_lem3}, using similar techniques as those in Lemma \ref{noise_lem}.  Defining the $r\times r$ diagonal matrices $T$ with diagonal entries $\tau_k$, we conclude that for $t>b$, $M_n(t)\to M(t)$ a.s., where now
$$M(t) = 
\begin{bmatrix} 
-t \cdot \underline m(t^2) I_r & 
-(T L)^{-1/2} \\ 
-(T L)^{-1/2} &
-t \cdot m(t^2) I_r
\end{bmatrix}.
$$
As before,  by Lemma A.1 of \cite{benaych2012singular}, we get that for $\tau_k\ell_k>1/D(b^2)$, $\sigma_k(\tilde Y) \to t_k$ a.s., where $ t_k^2 \cdot \underline m(t_k^2) m(t_k^2)  = 1/(\tau_k\ell_k)$. This finishes the spike convergence proof.
 
To obtain the limit of the angles for $\hat u_k$ for a $k$ such that $\ell_k>\tau_k D(b^2)$, consider the left singular vectors $\hat Z_k$ associated to $\sigma_k(\tilde Y)$. Define the $2r$-vector 
$$\alpha= 
\begin{bmatrix} 
 \beta_1\\
 \beta_2
\end{bmatrix}
= 
\begin{bmatrix} 
 (\Delta L)^{1/2}\mathcal{V}^\top \hat u_k \\ 
 (\Delta L)^{1/2}Z^\top \hat Z_k 
\end{bmatrix}.
$$
The vector $\alpha$ belongs to the kernel of $M_n(\sigma_k(\tilde Y))$. As argued by \cite{benaych2012singular}, the fact that the projection of $\alpha$ into the orthogonal complement of $M(t_k)$ tends to zero, implies that $\alpha_j\to 0$ for all $j\notin \{k,k+r\}$. This proves that $\nu_j^\top \hat u_k \to 0$ for $j\neq k$, and the analogous claim for the left singular vectors. 

The linear equation $M_n(\sigma_k(\tilde Y))\alpha=0$ in the $k$-th coordinate, where $k \le r$, reads (with $t=\sigma_k(\tilde Y)$):
$$ t \alpha_k Z_k^\top R Z_k  - \alpha_{r+k} (\xi_k \ell_k)^{-1/2} 
+ \sum_{i\neq k} M_n(\sigma_k(\tilde Y))_{ik} \alpha_k=0.$$
Only the first two terms are non-negligible due to the behavior of $M_n$, so we obtain $ t \alpha_k Z_k^\top R Z_k  = \alpha_{r+k} (\xi_k \ell_k)^{-1/2} +o_p(1)$. 
Moreover taking the norm of the equation $\hat Z_k = R( t Z \beta_1 + N\mathcal{V} \beta_2)$ (see Lemma 5.1 in \cite{benaych2012singular}), we get
$$t^2 \sum_{i,j \le r} \alpha_i \alpha_j  Z_i^\top R^2 Z_j 
+ \sum_{i,j \le r}  \alpha_{k+i} \alpha_{k+j}  \nu_i^\top N^\top R^2 N \nu_j 
+ \sum_{i,j \le r}  \alpha_i \alpha_{k+j}  Z_i R^2 N \nu_j =1.$$
From Lemma \ref{qf_lem3} and the discussion above, only the terms $\alpha_k^2 Z_k^\top R^2 Z_k$ and $\alpha_{r+k}^2 \nu_k^\top N^\top R^2 N \nu_k$ are non-negligible, so we obtain 
$$t^2\alpha_k^2 Z_k^\top R^2 Z_k + 
\alpha_{r+k}^2 \nu_k^\top N^\top R^2 N \nu_k =1 + o_p(1).$$
Combining the two equations above, 
$$\alpha_{r+k}^2\left[ \frac{Z_k^\top R^2 Z_k}{\xi_k \ell_k (Z_k^\top R Z_k)^2} + \nu_k^\top N^\top R^2 N \nu_k \right]=1 + o_p(1).$$
Since this is the same equation as in the single-spiked case, we can take the limit in a completely analogous way. This  finishes the proof.

\subsubsection{Proof of Lemma \ref{qf_lem3}}
\label{pf_qf_lem3}
As in Lemma \ref{qf_lem}, it is enough to show the same concentration statements for $A$ instead of $N$.

\textbf{Part 1}: The convergence $Z_k^\top R^\alpha Z_j \to 0$ a.s. for $\alpha=1,2$, if $k\neq j$, follows directly from the following well-known lemma, cited from \cite{couillet2011random}:

\begin{lem}[Proposition 4.1 in \cite{couillet2011random}]
Let $x_n \in \mathbb{R}^n $ and $y_n \in \mathbb{R}^n $ be independent sequences of random vectors, such that for each $n$ the coordinates of $x_n$ and  $y_n$ are independent random variables. Moreover, suppose that the coordinates of $x_n$ are identically distributed with mean 0, variance $C/n$ for some $C>0$ and fourth moment of order $1/n^2$. Suppose the same conditions hold for $y_n$, where the distribution of the coordinates of $y_n$ can be different from those of $x_n$. Let $A_n$ be a sequence of $n \times n$ random matrices such that $\|A_n\|$ is uniformly bounded. Then $x_n^\top A_n y_n \to_{a.s.} 0$.
\end{lem}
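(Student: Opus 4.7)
The plan is to condition on $(A_n, y_n)$, which in the setting of this lemma can be treated as fixed independent of $x_n$, so that the problem reduces to showing $x_n^\top w_n \to 0$ a.s. for the vector $w_n := A_n y_n$. I would carry this out via a fourth-moment bound on $x_n^\top w_n$ combined with the Borel--Cantelli lemma.

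For the fourth moment, expanding $(x_n^\top w_n)^4 = (\sum_i x_{ni} w_{ni})^4$ and using independence and mean-zero of the entries of $x_n$ leaves only the ``paired'' contractions, giving
\[
\E\bigl[(x_n^\top w_n)^4 \mid w_n\bigr]
= \sum_i w_{ni}^4 \,\E x_{ni}^4 \;+\; 3 \sum_{i \ne j} w_{ni}^2 w_{nj}^2 \,(\E x_{ni}^2)(\E x_{nj}^2).
\]
The first sum is bounded by $\E x_{n1}^4 \cdot \|w_n\|^4 = O(\|w_n\|^4 / n^2)$ by the hypothesis on the fourth moment of $x_n$, and the second by $3(C/n)^2 \|w_n\|^4$, so the total conditional fourth moment is $O(\|w_n\|^4/n^2)$. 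Using $\|w_n\| \le \|A_n\| \|y_n\| \le K\|y_n\|$, where $K$ is the uniform bound on $\|A_n\|$, and the companion calculation
\[
\E\|y_n\|^4 = \sum_i \E y_{ni}^4 + \sum_{i\ne j}(\E y_{ni}^2)(\E y_{nj}^2)
= n\cdot O(1/n^2) + n(n-1)\cdot(C/n)^2 = O(1),
\]
taking total expectations yields $\E(x_n^\top A_n y_n)^4 = O(1/n^2)$. Markov's inequality then gives $P(|x_n^\top A_n y_n| > \varepsilon) = O(1/n^2)$, which is summable in $n$, so Borel--Cantelli concludes $x_n^\top A_n y_n \to 0$ a.s.

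No deep obstacle is anticipated; the main work is the fourth-moment expansion. The one subtle point worth flagging is that the hypothesis $\E y_{ni}^4 = O(1/n^2)$ (rather than some weaker bound such as $o(1/n)$) is essential, since it is precisely what keeps the diagonal contribution $n\cdot\E y_{n1}^4$ to $\E\|y_n\|^4$ bounded as $n\to\infty$. A secondary point is that the conditioning step requires $A_n$ to be (at least effectively) independent of $x_n$ and $y_n$; in the application to Lemma \ref{qf_lem3} this is secured by the preliminary replacement of $N$ by the reduced noise matrix $A$ noted at the start of the proof, so the lemma applies cleanly there.
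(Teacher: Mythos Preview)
Your proof is correct and follows the standard fourth-moment / Borel--Cantelli route for bilinear forms with independent random vectors. Note, however, that the paper does not actually prove this lemma: it is quoted verbatim as Proposition~4.1 from \cite{couillet2011random} and simply invoked as a black box in the proof of Lemma~\ref{qf_lem3}. So there is no ``paper's own proof'' to compare against; you have supplied a proof where the paper defers to an external reference.

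Your argument is essentially the natural one, and in fact mirrors what the paper does in a closely related spot (Part~2 of Lemma~\ref{qf_lem}), where the same fourth-moment bound on $Z^\top w$ with $w$ independent of $Z$ is used to conclude a.s.\ convergence via Borel--Cantelli. Your observation that the lemma, as stated, does not explicitly assert independence of $A_n$ from $(x_n,y_n)$ is well taken; the conditioning step needs $A_n y_n$ to be independent of $x_n$, which is guaranteed in the application since $Z_k, Z_j$ are independent of the noise matrix after the replacement $N \to A$.
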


\textbf{Part 2}: To show $\nu_k^\top (t^2 I_p - N^\top N)^{-\alpha} \nu_j \to 0$ a.s. for $\alpha=1,2$, if $k\neq j$, the same technique cannot be used, because the vectors $u_k$ are deterministic. However, it is straightforward to check that the method of \cite{bai2007asymptotics} that we adapted in proving Part 3 of Lemma \ref{qf_lem} extends to proving $\nu_k^\top (t^2 I_p - N^\top N)^{-1} \nu_j \to 0$. Indeed, it is easy to see that all their bounds hold unchanged. In the final step, as a deterministic equivalent for $\nu_k^\top (t^2 I_p - N^\top N)^{-1} \nu_j \to 0$,  one obtains $\nu_k^\top (t^2 I_p - t^2 m(t^2) \Sigma)^{-1} \nu_j$, which tends to 0 by our assumption, showing $\nu_k^\top (t^2 I_p - N^\top N)^{-1} \nu_j \to 0$. Then $\nu_k^\top (t^2 I_p - N^\top N)^{-2} \nu_j \to 0$ follows from the derivative trick employed in Part 3 of Lemma \ref{qf_lem2}. This finishes the proof.

\subsection{Proof of Corollary \ref{standard_spike_proj}}
\label{pf_standard_spike_proj}

This corollary is a special case of our previous results, so the convergence results hold in this case. We only need to check that the limits are given by the formulas provided. Since very similar analysis has been performed by \cite{benaych2012singular} and \cite{nadakuditi2014optshrink}, we only give part of the proof. 

Under model \eqref{po_def}, since we consider the singular values of the normalized matrix $m^{-1/2} \tilde Y$ instead of $\tilde Y$ as in Thm \ref{spike_proj_multi}, it is easy to see that the relevant equation for the limiting singular values in this case is $t^2 \cdot m_0(t^2) \underline m_0(t^2) = 1/\delta \ell$ instead of $t^2 \cdot m(t^2) \underline m(t^2) = 1/\tau \ell$. 

Note that $\underline m_0(t^2) = \int (x-t^2)^{-1} d\underline F_{\gamma,H}(x) = \gamma \cdot m_0(t^2)+(\gamma-1)t^{-2}$. Thus the equation for $t^2$ reads $(\gamma t^2 m_0(t^2) + \gamma -1) m_0(t^2)= (\delta \ell)^{-1}$. However, it is well-known that $m_0(t^2)$ obeys the equation $\gamma t^2 m_0^2 + (t^2+\gamma -1) m_0+1=0$. From these two relations we obtain $t^2 \cdot m_0(t^2) = -1-1/(\delta \ell)$. Plugging this back into the second equation and simplifying, we obtain $t^2 = (\delta \ell+1)(1+\gamma/(\delta \ell))$, as required. Finally, under model \eqref{po_def2}, the proof is analogous.

 \subsection{Proofs for covariance estimation}
 \subsubsection{Proof of Cor.\ \ref{cor:s_hat_unred}}
 \label{diag_control_pf}

 From the spectral analysis of $Y$ contained in Corollary \ref{standard_spike_proj} from Section \ref{proba} and the formula \eqref{sig_hat}, the proof of this corollary is immediate from the first part of the following lemma, proved below:

 \begin{lem}
 We have the following limits (in operator norm) of the diagonals:
 \begin{enumerate}
 \item 
 $\lim_{n\rightarrow\infty} \| \diag(\hat{\Sigma}_{Y}) - m \cdot  I_p \| = 0$ a.s.

 \item 
 $\lim_{n\rightarrow\infty} \| \diag(\hat{\Sigma}_S) - I_p \| = 0$ a.s.

 \end{enumerate}
 \end{lem}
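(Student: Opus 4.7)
The plan is to reduce both parts to a single uniform-in-$j$ concentration statement for the diagonal entries of $\hat{\Sigma}_Y$, exploiting the elementary identity $\|D\|_{op} = \max_j |D_{jj}|$ for diagonal matrices.

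For Part 1, in the reduced-noise model one has $(\hat{\Sigma}_Y)_{jj} = n^{-1}\sum_{i=1}^n D_{ij}^2(S_{ij}+\ep_{ij})^2$, and I would split $(\hat{\Sigma}_Y)_{jj} - m$ into a bias $\E(\hat{\Sigma}_Y)_{jj} - m$ and a fluctuation $(\hat{\Sigma}_Y)_{jj} - \E(\hat{\Sigma}_Y)_{jj}$. For the bias, independence of $D_{ij}$ from $(S_i,\ep_i)$ and of $S_i$ from $\ep_i$ gives $\E Y_{ij}^2 = m(\Sigma_{S,jj}+1)$ with $\Sigma_{S,jj}=\sum_{k\le r}\ell_k u_{kj}^2$; the delocalization $|u_{kj}|\le C\log(p)^B/p^{1/2}$ and fixed rank $r$ then yield $\max_j \Sigma_{S,jj} = O(\log(p)^{2B}/p) = o(1)$, hence $\max_j|\E(\hat{\Sigma}_Y)_{jj} - m| \to 0$. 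For the fluctuation, the summands $Y_{ij}^2 - \E Y_{ij}^2$ are iid mean-zero in $i$; using a Rosenthal- or Marcinkiewicz--Zygmund-type inequality at a moment order large enough that $p$ times the per-entry tail is summable in $n$, together with Borel--Cantelli, yields $\max_j|(\hat{\Sigma}_Y)_{jj} - \E(\hat{\Sigma}_Y)_{jj}| \to 0$ almost surely. This needs sufficiently many moments of $Y_{ij}$, which by independence factorize into moments of $D_{ij}$ and $\ep_{ij}$ (furnished by the eighth-moment assumption stated at the start of Sec.\ \ref{sec:cov_est}) and of $S_{ij}$ (where delocalization makes $\E S_{ij}^{k}$ vanishingly small for any fixed $k$, so only moderate moments of $z_{ik}$ suffice). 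Combining bias and fluctuation proves Part 1.

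Part 2 is a direct algebraic corollary. Applying $\diag(\cdot)$ to \eqref{sig_hat} and using $\mu^2 + \sigma^2 = m$ collapses the formula to $\diag(\hat{\Sigma}_S) = m^{-1}\diag(\hat{\Sigma}_Y) - I_p$, so Part 1 immediately translates into operator-norm convergence of $\diag(\hat{\Sigma}_S)$ to its deterministic limit. The principal obstacle throughout is the uniform control in the fluctuation step of Part 1: a pointwise Chebyshev bound gives only an $O(1/n)$ tail per coordinate, which after a union bound over $p \asymp n$ entries fails the Borel--Cantelli summability test, so the argument genuinely requires a higher-order moment inequality. Once that is in place, the rest is routine algebra and book-keeping.
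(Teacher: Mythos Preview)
Your approach is essentially the paper's: both reduce to uniform control of $\max_j |(\hat\Sigma_Y)_{jj} - m|$ via moment bounds, a union bound over the $p$ coordinates, and Borel--Cantelli, and both note that Part~2 is immediate from Part~1 and the defining formula \eqref{sig_hat}. The paper organizes the argument slightly differently: rather than a single bias-plus-fluctuation split on $Y_{ij}^2$, it expands $Y_{ij}^2 = D_{ij}^2 s_{ij}^2 + 2D_{ij}^2 s_{ij}\ep_{ij} + D_{ij}^2\ep_{ij}^2$ and bounds the three averages separately. This buys something concrete: the two signal-dependent sums carry factors of $u_j$, so delocalization makes their variance (resp.\ fourth moment) of order $\log^{4B}(n)/n^3$ (resp.\ $\log^{4B}(n)/n^2$), and Chebyshev (resp.\ fourth-moment Markov) already yields summability after the union bound --- crucially without ever invoking more than fourth moments of $z$. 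Only the pure-noise sum $n^{-1}\sum_i D_{ij}^2\ep_{ij}^2$ requires the eighth-moment hypothesis on $D$ and $\ep$.

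Your sentence ``delocalization makes $\E S_{ij}^k$ vanishingly small for any fixed $k$, so only moderate moments of $z_{ik}$ suffice'' is the one place to be careful: $\E S_{ij}^k$ is only finite when $\E|z|^k$ is, and the model only guarantees $\E|z|^{4+\phi}<\infty$. A direct Rosenthal bound on the undecomposed $Y_{ij}^2$ at order $q>4$ (which is what summability after the union bound demands, as you correctly diagnose) would therefore call on moments of $z$ that need not exist. The fix is precisely the paper's three-term split, which confines the high-moment requirement to the $z$-free noise term; once you make that split explicit, your outline and the paper's proof coincide.
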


 First, note that the second statement is an immediate corollary of the first. In the proof of the first statement, for notational convenience only, we will assume that $r=1$; however, an identical proof goes through for any fixed $r$. We will therefore drop the subscript $k$ for the proof.

 Decompose $X_i$ into signal and noise, writing $X_i = S_i + \ep_i$, 
 where $S_i = \ell^{1/2}z_i u$ and $\text{Cov}(\ep) = I_p$, and $S_i$ and $\ep_i$ are independent. So if $S_i = (s_{i1},\dots,s_{ip})^\top$, $X_i = (X_{i1},\dots,X_{ip})^\top$, $Y_i = (Y_{i1},\dots,Y_{ip})^\top$, and $\ep_i = (\ep_{i1},\dots,\ep_{ip})^\top$, we can write $Y_{ij} = D_{ij} X_{ij} = D_{ij} s_{ij} + D_{ij} \ep_{ij}$
 and consequently
$
        Y_{ij}^2 = D_{ij}^2 s_{ij}^2 
                    + 2 D_{ij}^2 s_{ij} \ep_{ij}
                    + D_{ij}^2 \ep_{ij}^2 
$.

 The $(i,i)^{th}$ element of $\diag(\hat{\Sigma}_Y)$ is then
        \begin{align*}
        \frac{1}{n}\sum_{j=1}^n Y_{ij}^2 = \frac{1}{n}\sum_{j=1}^n D_{ij}^2 s_{ij}^2 
                                        + \frac{1}{n}\sum_{j=1}^n 2 D_{ij}^2 s_{ij} \ep_{ij}
                                        + \frac{1}{n}\sum_{j=1}^n D_{ij}^2 \ep_{ij}^2 
        \end{align*}
 and we will control each of the three sums on the right side separately.

 Observe that $ \mathbb{E} \bigg[ 
                   \frac{1}{n} \sum_{j=1}^n D_{ij}^2 s_{ij}^2 
                   \bigg]
        = m  \ell  u_i^2$
 and so 
        \begin{align*}
        \text{Var} \bigg( \frac{1}{n} \sum_{j=1}^n D_{ij}^2 s_{ij}^2 \bigg)
        = \frac{1}{n}(\mathbb{E}[d^4] \mathbb{E}[z^4] \ell^2 u_i^4
                            - m^2 \ell u_i^4 )
        \le c \frac{\log^{4B}(n)}{n^3}
        \end{align*}
 where $c>0$ is a constant. Chebyshev's inequality then gives
        \begin{align*}
%
        \mathbb{P}\bigg\{ 
                \bigg| \frac{1}{n} \sum_{j=1}^n D_{ij}^2 s_{ij}^2 
                -  m \cdot \ell^2 \cdot u_i^2 \bigg| 
                \ge \ep \text{ for some } i \bigg\}
        \le c\frac{p \log^{4B}(n)}{\ep^2 n^3}.
        \end{align*}
 Since $u_i^2 \le C \log^B(p)/p$, this shows that $n^{-1} \sum_{j=1}^n D_{ij}^2 s_{ij}^2$ converges a.s. to 0 as $p,n \rightarrow \infty$ and $p/n \rightarrow \gamma$.

 For the sum of terms $D_{ij}^2 \ep_{ij}^2$, observe that $\mathbb{E} n^{-1} \sum_{j=1}^n 2 D_{ij}^2 s_{ij} \ep_{ij} = 0$, and 
        \begin{align*}
        \mathbb{E}| 2 D_{ij}^2 s_{ij} \ep_{ij}|^4
                = 16\mathbb{E}(d^8) \ell^2 u_i^4
                \le c \frac{\log^{4B}(n)}{n^2}
        \end{align*}
 where $c > 0$ is a constant and we have used the estimate $|u_i| \le C\log^{B}(p)/\sqrt{p}$. By using Markov's inequality for the fourth moment (see \cite{petrov2012sums}), we then obtain:
        \begin{align*}
%
        \mathbb{P}\bigg\{
            \bigg| \frac{1}{n} \sum_{j=1}^n 2 D_{ij}^2 s_{ij} \ep_{ij}\bigg| 
                \ge \ep \text{ for some } i \bigg\} 
        \le c \frac{\log^{4B}(n)}{\ep^4 n^2}.
        \end{align*}
 This proves that $n^{-1} \sum_{j=1}^n 2 D_{ij}^2 s_{ij} \ep_{ij}$ converges a.s. to 0 as $p,n \rightarrow \infty$ and $p/n \to \gamma$.

 Finally, to deal with $n^{-1}\sum_{j=1}^n D_{ij}^2 \ep_{ij}^2$, observe that $\mathbb{E} D_{ij}^2 \ep_{ij}^2 = m$, and that  $\mathbb{E} |D_{ij}^2 \ep_{ij}^2|^4 = \mathbb{E}d^8 \mathbb{E}{\ep^8}$
 which is assumed finite. Therefore using again Markov's inequality for the fourth moment,
        \begin{align*}
%
        \mathbb{P} \bigg\{ 
            \bigg| \frac{1}{n} \sum_{j=1}^n D_{ij}^2 \ep_{ij}^2 - m \bigg| 
                \ge \ep \text{ for some } i  \bigg\}
        \le c p \frac{ \mathbb{E}d^8 \mathbb{E}{\ep^8}}{\ep^4 n^4}.
        \end{align*}
 This proves that $n^{-1} \sum_{j=1}^n D_{ij}^2 \ep_{ij}^2$ converges a.s. to 0 as $p,n \to \infty$ and $p/n \to \gamma$. This finishes the proof.

 \subsubsection{Proof of \eqref{loss_infty}}
 \label{proof_eta_2d}
 The following analysis is adapted from that in \cite{donoho2013optimal}. We start with a fact from linear algebra.

 \begin{lem} 
 \label{simu_diag}
 Suppose $A$ and $B$ are two $p$-by-$p$ symmetric matrices, with eigenvectors $u_1,\dots,u_p$ and $v_1,\dots,v_p$. Suppose that each one has eigenvalue $0$ with multiplicity $p-r$ for some $1 \le r < p/2$, with corresponding eigenvectors $u_{r+1},\dots,u_p$ and $v_{r+1},\dots,v_p$. Suppose that no eigenvector $v_1,\dots,v_r$ lies in the span of $u_1,\dots,u_r$. Then there is an orthogonal matrix $W$ such that 
        \begin{align}
        \label{aa77}
        W^\top A W = \diag(\ell_1,\dots,\ell_r) \oplus \mathbf{0}_{(p-r)\times(p-r)} 
        \end{align}
 and 
        \begin{align}
        \label{bb77}
        W^\top B W = \tilde{B} \oplus \mathbf{0}_{(p-2r)\times(p-2r)}
        \end{align}
 where $\tilde{B}$ is a $2r$-by-$2r$ matrix whose entries are continuous functions of the inner products $u_i^\top v_j$, $1\le i,j \le r$, on $\mathbb{R}^{2r} \setminus \{\pm1\}^r$.

 Furthermore, if we assume in addition that $u_i^\top v_j = c_i \delta_{i=j}$ for some numbers $c_i \in (-1,1)$, and we let $s_i = \sqrt{1-c_i^2}$, then the matrix $\tilde{B}$ is of the form:
        \begin{align*}
        \left[
        \begin{array}{c c}
        \diag(\eta_1 c_1^2,\dots,\eta_r c_r^2) &
                \diag(\eta_1 c_1s_1,\dots,\eta_r c_rs_r) \\
        \diag(\eta_1 c_1s_1,\dots,\eta_r c_rs_r) &
                \diag(\eta_1 s_1^2,\dots,\eta_r s_r^2)
        \end{array}
        \right]
        \end{align*}
 \end{lem}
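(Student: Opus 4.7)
The plan is to build the orthogonal matrix $W$ by hand so that its first $r$ columns span the range of $A$ and its first $2r$ columns span the combined range of $A$ and $B$. Let $U_r = \mathrm{span}(u_1,\dots,u_r)$ and $V_r = \mathrm{span}(v_1,\dots,v_r)$, so that $U_r$ is the range of $A$ and $V_r$ is the range of $B$. The hypothesis that no $v_j$ lies in $U_r$, combined with orthonormality of the $v_j$'s, implies that $W_0 := U_r + V_r$ is $2r$-dimensional. Let $P$ denote orthogonal projection onto $U_r^\perp$, and apply Gram--Schmidt to $Pv_1,\dots,Pv_r$ to produce an orthonormal basis $w_1,\dots,w_r$ of $U_r^\perp \cap W_0$. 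Extend $\{u_1,\dots,u_r,w_1,\dots,w_r\}$ to an orthonormal basis of $\mathbb{R}^p$ by adjoining any orthonormal basis of $W_0^\perp$, and let $W$ have these vectors as its columns.

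Given this $W$, equation \eqref{aa77} is immediate from the spectral decomposition $A=\sum_{k=1}^r \ell_k u_k u_k^\top$, since $W^\top u_k = e_k$ for $k\le r$ and the last $p-r$ columns of $W$ lie in $U_r^\perp$. For \eqref{bb77}, note that the range of $B$ is contained in $W_0$, which equals the span of the first $2r$ columns of $W$; hence $W^\top B W$ is supported on its top-left $2r\times 2r$ block, which we call $\tilde B$. To establish the continuity statement, expand each $v_j$ in the basis $\{u_1,\dots,u_r,w_1,\dots,w_r\}$: the $u_i$-coefficients are literally the inner products $u_i^\top v_j$, and the $w_i$-coefficients arise from the Gram--Schmidt recursion, which is rational in the entries of the Gram matrix $(u_i^\top v_j)_{i,j}$ with denominators that vanish precisely when some $Pv_j$ degenerates, i.e.\ when some $|u_i^\top v_j|\to 1$. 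Substituting these expansions into $B=\sum_k \eta_k v_k v_k^\top$ and multiplying out yields each entry of $\tilde B$ as a continuous function of the inner products, as claimed.

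For the specific form under $u_i^\top v_j = c_i \delta_{ij}$, take the shortcut $w_i := (v_i - c_i u_i)/s_i$. Orthonormality of $\{u_1,\dots,u_r,w_1,\dots,w_r\}$ is a one-line check: $\|w_i\|^2 = (1-2c_i^2+c_i^2)/s_i^2 = 1$; $u_i^\top w_j = (c_i\delta_{ij} - c_j\delta_{ij})/s_j = 0$; and for $i\ne j$, $w_i^\top w_j = (v_i^\top v_j - c_i u_i^\top v_j - c_j v_i^\top u_j + c_ic_j u_i^\top u_j)/(s_is_j) = 0$ using $u_i^\top v_j = c_i\delta_{ij}$ and orthonormality of the $v$'s. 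Writing $v_k = c_k u_k + s_k w_k$, the decomposition $B = \sum_k \eta_k v_k v_k^\top = \sum_k \eta_k (c_k u_k + s_k w_k)(c_k u_k + s_k w_k)^\top$ gives, block by block in the basis $(u_1,\dots,u_r,w_1,\dots,w_r)$:
\[
u_i^\top B u_j = \eta_i c_i^2\,\delta_{ij},\qquad u_i^\top B w_j = \eta_i c_i s_i\,\delta_{ij},\qquad w_i^\top B w_j = \eta_i s_i^2\,\delta_{ij},
\]
which is exactly the block-diagonal-within-blocks form claimed.

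The only genuinely subtle step is the continuity assertion in the general first part; once the orthonormal basis $\{u_i,w_j\}$ is pinned down via an explicit (Gram--Schmidt) formula, this is essentially bookkeeping, but one must verify that the only singularities of the construction occur at $u_i^\top v_j \in \{\pm 1\}$, corresponding to $v_j$ degenerating into $U_r$. The explicit second part, which is what the main text actually uses, is then routine.
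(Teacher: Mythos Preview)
Your approach is essentially identical to the paper's: take $u_1,\dots,u_r$ as the first $r$ columns of $W$, obtain the next $r$ by Gram--Schmidt of $v_1,\dots,v_r$ (equivalently $Pv_1,\dots,Pv_r$) against the first $r$, complete arbitrarily, and handle the special case by the same direct computation with $w_i=(v_i-c_iu_i)/s_i$.

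One concrete slip: your justification that $\dim(U_r+V_r)=2r$ does not follow from the stated hypothesis. ``No individual $v_j$ lies in $U_r$'' together with orthonormality of the $v_j$ does \emph{not} force $U_r\cap V_r=\{0\}$; e.g.\ with $r=2$, $e\perp u_1,u_2$, $v_1=(u_1+e)/\sqrt2$, $v_2=(u_1-e)/\sqrt2$ are orthonormal, neither lies in $U_r$, yet $v_1+v_2\in U_r$ and $\dim(U_r+V_r)=3$. In that situation your Gram--Schmidt on $Pv_1,\dots,Pv_r$ would collapse. The paper's own proof glosses over exactly the same point (its Gram--Schmidt step tacitly assumes each $v_k\notin\mathrm{span}(u_1,\dots,u_r,\tilde v_1,\dots,\tilde v_{k-1})$), and in the intended application---where $u_i^\top v_j\to c_i\delta_{ij}$ with $|c_i|<1$---the issue never arises. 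So this is a shared imprecision in the lemma's hypothesis rather than a defect in your argument relative to the paper's.
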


 \begin{proof}
 Form vectors $\tilde{v}_1,\dots,\tilde{v}_{p-2r}$, by performing Gram-Schmidt orthogonalization of $v_1,\dots,\tilde{v}_{p-2r}$ against the vectors $u_1,\dots,u_r$. If the columns of $W$ are the vectors in this basis, then clearly \eqref{aa77} holds, since $A u_k = \ell_k u_k $ for $k=1,\dots,r$, and $A \tilde{v}_k = 0$ for $k=1,\dots,p-2r$. Furthermore, since $B \tilde{v}_{k} = 0$ for $k=r+1,\dots,p-2r$, \eqref{bb77} holds for some $2r$-by-$2r$ block $\tilde{B}$. We need only check that the entries of $\tilde{B}$ are continuous functions of $u_i^\top v_j$, $1\le i,j \le r$, on $\mathbb{R}^{2r} \setminus \{\pm1\}^r$.

 Let $W_{2r}$ denote the $p$-by-$2r$ matrix containing the first $2r$ columns of $W$, $U_r$ denote the matrix with columns $u_1,\dots,u_r$, $V_r$ denote the matrix with columns $v_1,\dots,v_r$, and $\tilde{V}_r$ the matrix with columns $\tilde{v}_1,\dots,\tilde{v}_r$. We can therefore write $W_{2r} = [U_r \,\,\, \tilde{V}_r]$;

 By the Gram-Schmidt construction, every vector $v_k$, $k=1,\dots,r$, can be written as a linear combination of the vectors $u_1,\dots,u_r,\tilde{v}_1,\dots,\tilde{v}_k$; in matrix form, these linear combinations can be expressed as:
        \begin{align*}
        V_r = U_r (U_r^\top V_r) + \tilde{V}_r (\tilde{V}^\top V_r)
        \end{align*}

 Now, we observe that the $2r$-by-$2r$ block $\tilde{B}$ is
        \begin{align*}
        W_{2r}^\top B W_{2r} = 
        \begin{bmatrix} 
 U_r^\top V \\ 
 \tilde{V}_r^\top V
\end{bmatrix}
         \diag(\eta_1,\dots,\eta_r)
         [U_r^\top V \,\,\, \tilde{V}_r^\top V]
        \end{align*}

 We need only show that the entries of $\tilde{V}_r^\top V_r$ are continuous functions of the inner products $u_i^\top v_j$, $1 \le i,j \le r$. Let $R = \tilde{V}_r^\top V$. Then $R_{k,l} = 0$ whenever $k > l$. When $k=l$, we have:
        \begin{align*}
        R_{k,k} = \tilde{v}_k^\top v_k
                = \sqrt{1 - \sum_{i=1}^r (u_i^\top v_k)^2 
                          - \sum_{i=1}^{k-1} (\tilde{v}_i^\top v_k)^2 }
        \end{align*}
 and when $k < l$ we have
        \begin{align*}
        R_{k,l} = \frac{-\sum_{i=1}^r (u_i^\top v_l)(u_i^\top v_k)
                        + \sum_{i=1}^{k-1}(\tilde{v}_i^\top v_l)(\tilde{v}_i^\top v_k)}
                       {\| v_k - \sum_{i=1}^r (u_i^\top v_k) u_k\|}.
        \end{align*}
Since for any $l=2,\dots,r$, we have
        \begin{align*}
        R_{1,l} = \tilde{v}_1^\top v_l
                = \frac{- \sum_{i=1}^r (u_i^\top v_1)(u_i^\top v_l)}
                       {\|v_1 - \sum_{i=1}^r (u_i^\top v_1)u_i\|}
        \end{align*}
 which is obviously a continuous function of the inner products $v_1^\top u_i$ on $\mathbb{R}^{r}\setminus\{\pm1\}^r$, an induction argument easily shows all entries of $R$ are continuous in the inner products $u_k^\top v_l$.

 Finally, if $u_i^\top v_j = c_i \delta_{i=j}$, the final assertion follows immediately, finishing the proof.
 \end{proof}

If we apply the permutation $\pi_{2r} = (1,r+1,2,r+2,\dots,r,2r)$ to the rows and columns of the block matrix $\tilde{B}$ from Lemma \ref{simu_diag}, the corresponding matrix becomes
$
        \bigoplus_{i=1}^r B_2(\eta_i,c_i,s_i)
$
 where 
        \begin{align} \label{B_mx}
        B_2(\eta,c,s) = 
        \left(
        \begin{array}{c c}
         \eta c^2 & \eta  c s     \\
         \eta c s & \eta s^2        
        \end{array}
        \right).
        \end{align}
 Applying the same permutation to the top $2r$-by-$2r$ block of the matrix $A$ from Lemma \ref{simu_diag} turns this block into:
$   \bigoplus_{i=1}^r A_2(\ell_i)
$
 where 
$
        A_2(\ell) = 
        \left(
        \begin{array}{c c}
         \ell & 0     \\
         0 & 0        
        \end{array}
        \right).$

 Now, let $\tilde{B}$ be given by the formula \eqref{B_mx}, but where $c = c_i = c(\delta \ell_i)$ is the asymptotic inner product between the population eigenvector $u_i$ and the empirical eigenvector $\hat{u}_i$, and $\eta = \eta_i = \eta(\lambda_i)$ is the almost sure limit of the $i^{th}$ eigenvalue $\eta(\hat\lambda_i)$ of $\hat{\Sigma}^\eta_S$. Since the shrinkers $\eta$ collapse the vicinity of the bulk to 0, it follows from Cor.~\ref{cor:s_hat} that the shrunken estimator has rank at most $r$ a.s. Hence, by Lemma \ref{simu_diag} and Cor.~\ref{cor:s_hat}, if $L_p(A,B)$ is any orthogonally-invariant loss function that decomposes over blocks, we have the almost sure convergence of $\smash{L_p(\Sigma_S,\hat{\Sigma}^\eta_S)}$ to the quantity
        \begin{align*}
%
        L_\infty(\Sigma_S,\hat{\Sigma}^\eta_S)
        = L_{2r}\bigg( \bigoplus_{k=1}^r A_2(\ell_k),
                       \bigoplus_{k=1}^r B_2(\eta(\lambda_{k}),c_{k},s_{k})
                \bigg).
        \end{align*}
 This is the desired result.

\subsubsection{Proof of Props.\ \ref{rel_diff_prop}}
\label{rel_diff_prop_pf}
 We will only provide a proof for the reduced-noise model; the proof for the unreduced-model is even simpler. Define the operator $\mathcal{L}_p$ acting on $p$-by-$p$ matrices by
        \begin{math}
        \mathcal{L}_p(\Sigma) = \mu^2 \Sigma + \sigma^2 \diag(\Sigma)
        \end{math}
 and define the operator $L_p$ by 
        \begin{math}
        L_p(\Sigma) = \frac{1}{n} \sum_{k=1}^n D_k \Sigma D_k.
        \end{math}
 Also, define define the $p$-by-$p$ matrices $B_p = \hat{\Sigma}_Y - \frac{1}{n}\sum_{k=1}^n D_k^2$  and $\mathcal{B}_p = \hat{\Sigma}_Y - mI_p$.

 Let $\Delta B_p = \mathcal{B}_p - B_p$, and $\Delta L_p = \mathcal{L}_p - L_p$. The operator $\Delta L_p$ taking $p$-by-$p$ matrices to $p$-by-$p$ matrices is diagonal. In this proof, when we refer to the $(i,j)^{th}$ diagonal entry of a diagonal operator on $\mathbb{R}^{p \times p}$, we mean the entry that multiplies the $(i,j)^{th}$ coordinate of a matrix. When $i \ne j$, the $(i,j)^{th}$ diagonal entry of $\Delta L_p(\Sigma)$ is $\mu^2 - n^{-1}\sum_{k=1}^nD_{i,k} D_{j,k}\Sigma_{ij}$; and when $i=j$, the $(i,i)^{th}$ diagonal entry of $\Delta L_p$ is $m - n^{-1}\sum_{k=1}^nD_{i,k}^2$.

 A proof nearly identical to the proof of Cor.\ \ref{cor:s_hat} shows that the maximum of all the $p^2$ diagonal entries of $\Delta L_p$ converges almost surely to 0; in other words, if $\mathbb{R}^{p\times p}$ is equipped with Frobenius norm, then the operator norm of $\Delta L_p : \mathbb{R}^{p \times p} \to \mathbb{R}^{p \times p}$ converges to 0 almost surely.

 The $p$-by-$p$ matrix $\Delta B_p$ is diagonal, with $i^{th}$ diagonal entry equal to $m - n^{-1}\sum_{k=1}^n D_{k,i}$. Again, a proof like the proof of Cor.\ \ref{cor:s_hat} shows that the supremum of these elements converges to zero almost surely as $n,p\to\infty$.

 Also, note that the Frobenius norm of the matrix $\hat{\Sigma}_S$, which is the sum of squares of its eigenvalues, is of size $\approx\sqrt{n}$. To see this, observe that
        \begin{align*}
        \frac{1}{n}\|\hat{\Sigma}_S\|_F^2 
        = \frac{1}{n}\sum_{k=1}^{p} \sigma_k(\hat{\Sigma}_S)^2
        = \frac{1}{n}\sum_{k=1}^{p-r} \sigma_k(\hat{\Sigma}_S)^2
            + \frac{1}{n}\sum_{k=p-r+1}^{p} \sigma_k(\hat{\Sigma}_S)^2.
        \end{align*}
 The first term $\frac{1}{n}\sum_{k=1}^{p-r} \sigma_k(\hat{\Sigma}_S)^2$ converges almost surely to the second moment of the Marchenko-Pastur law, which is finite since the distribution has finite support. The second term $\frac{1}{n}\sum_{k=p-r+1}^{p} \sigma_k(\hat{\Sigma}_S)^2$ converges to 0.

 Observe that 
        \begin{align*}
        L_p(\hat{\Sigma}_S) - \Delta L_p(\hat{\Sigma}_S)
        = \mathcal{L}_p(\hat{\Sigma}_S) 
        = \mathcal{B}_p 
        = B_p + \Delta B_p
        = L_p(\hat{\Sigma}_S^\prime) + \Delta B_p
        \end{align*}
 or in other words,
        \begin{align*}
        \frac{\hat{\Sigma}_S - \hat{\Sigma}_S^\prime}{\|\hat{\Sigma}_S\|} 
        = \frac{L_p^{-1}(\Delta L_p\hat{\Sigma}_S)}{\|\hat{\Sigma}_S\|} 
            - \frac{L_p^{-1}(\Delta B_p)}{\|\hat{\Sigma}_S\|}.
        \end{align*}
 The result follows immediately.
%

%
%
%

\subsection{Proof that BLP is asymptotically diagonalized in PC basis (Sec. \ref{sec:denoise_setup})}
\label{sv_shr_equiv}

First, it is easy to check that $\Cov{S_i,Y_i} = \mu \sum_{k=1}^r\ell_k  u_k u_{k}^\top$, and  $M = \Cov{Y_i,Y_i}^{-1} = [\mu^2\sum_{j=1}^r\ell_j  u_{j}  u_{j}^\top+mI_p +E]^{-1}$, where $E = \sigma^2 \sum_{j=1}^r\ell_j  \diag (u_{j}\odot u_{j})$. Therefore the BLP equals  $\hat S_i^{BLP}=\mu \sum_{k=1}^r\ell_k  u_k u_{k}^\top M Y_{i}$. Moreover, since $u_j$ are delocalized, the operator norm $\|E\| \to 0$, so that it is easy to check that $\|\hat S_i^{BLP}-\hat S_i^0\|^2 \to 0$, where 
\[
\hat S_i^0 = \mu \sum_{k=1}^r\ell_k u_k u_{k}^\top M_0 Y_{i},
\]
and $M_0 = [\mu^2\sum_{j=1}^r\ell_j  u_{j}  u_{j}^\top+mI_p]^{-1}$. Therefore, it is enough to show that $\| \hat S_i^{0} - \hat S_i \|^2  \to 0$, where recall that $\hat S_i = \sum_{k=1}^{r} \mu\ell_k/(\mu^2 \ell_k + m) u_k u_k^\top Y_{i}$. We can write, with $m_k = u_k^\top(M_0 - I_p/(\mu^2 \ell_k+m)) Y_i$.
\begin{align*}
\| \hat S_i^{0} - \hat S_i \|^2  
&=  \| \mu \sum_{k=1}^r\ell_k u_k u_{k}^\top M_0 Y_{i}
-  \sum_{k=1}^{r} \mu\ell_k/(\mu^2 \ell_k + m) u_k u_k^\top Y_{i}\|^2\\
& =  \| \mu \sum_{k=1}^r\ell_k  u_k m_k  \|^2 
 = \mu^2 \sum_{k,j=1}^r \ell_k \ell_j m_k m_j u_k^\top u_j.
\end{align*}
Therefore, to show $\| \hat S_i^{0} - \hat S_i \|^2  \to0$, it is enough to show $m_k\to 0$. For this, using the formula $u^\top (uu^\top +T)^{-1} = u^\top T^{-1}/(1+ u^\top T^{-1}u)$, we can write
$$
u_{k}^\top \left[\mu^2\sum_{j=1}^r\ell_j  u_{j}  u_{j}^\top+mI_p \right]^{-1}
=u_{k}^\top \left[\mu^2\sum_{j\neq k}^r\ell_j  u_{j}  u_{j}^\top+mI_p \right]^{-1}
/\left(1+ \mu^2 u_{k}^\top \left[\mu^2\sum_{j\neq k}^r\ell_j  u_{j}  u_{j}^\top+mI_p \right]^{-1}u_{k}\right).
$$

Under the assumptions of Cor.\ \ref{standard_spike_proj}, we have $u_k^\top u_j \to \delta_{kj}$, hence it is easy to see that the denominator converges a.s. to $1+\mu^2/m$. Indeed by using the  formula $(VV^\top +mI)^{-1} =[I- V (V^\top V +m I)^{-1}V^\top]/m$, for $V=\mu [\ell_1^{1/2} u_{1}, \ldots, \ell_r^{1/2} u_{r}]$ (excluding $u_{k}$)
$$
u_{k}^\top \left[\mu^2\sum_{j\neq k}^r\ell_j  u_{j}  u_{j}^\top+mI \right]^{-1}u_{k} 
=  u_{k}^\top u_{k}/m -  u_{k}^\top V (V^\top V +m I)^{-1}V^\top u_{k}/m.
$$
Now the entries of the $r-1$-dimensional vector $v_k=V^\top u_{k}$ are  $\mu \ell^{1/2}u_{j}^\top u_{k}$ for $j\neq k$. Therefore, they converge to zero a.s. Since the operator norm of $(V^\top V +mI)^{-1}$ is bounded above by $1/m$, this shows that the second term converges to zero a.s. This shows that the denominator converges to $1+\mu^2/m$ a.s. Finally, by using the  formula $(VV^\top +mI)^{-1} = [I- V (V^\top V +mI)^{-1}V^\top]/m$ once again, we conclude similarly that 
$$
u_{k}^\top \left[\mu^2\sum_{j\neq k}^r\ell_j  u_{j}  u_{j}^\top+m I \right]^{-1}Y_{i} 
=  u_{k}^\top Y_{i}/m  -  u_{k}^\top V (V^\top V +mI)^{-1}V^\top Y_{i}/m.
$$

Denoting $M_k = \left[\mu^2\sum_{j\neq k}^r\ell_j  u_{j}  u_{j}^\top+I \right]^{-1}$, we have
$$m_k 
= \frac{u_{k}^\top M_k Y_{i}}{1+u_{k}^\top M_ku_{k}}  -  \frac{u_{k}^\top Y_{i}/m}{1+\mu^2/m \cdot \ell_k}
= \frac{u_{k}^\top Y_{i}}{m} \left(\frac{1}{1+u_{k}^\top M_ku_{k}}-\frac{1}{1+\mu^2/m \cdot\ell_k}\right)
- \frac{v_k^\top (V^\top V +mI)^{-1}v_k/m}{1+u_{k}^\top M_ku_{k}}.
$$
Based on our previous calculations, both terms tend to 0, implying $m_k\to 0$. Therefore, we have $\| \hat S_i^{BLP} - \hat S_i \|^2  \to0$.  By the triangle inequality, this immediately implies that all MSE properties for $ \hat S_i^{BLP}$ also hold for $ \hat S_i$. This proves the desired claim.

\subsection{Proof of Thm.\ \ref{den_thm}}
\label{den_pfs}

\subsubsection{BLP}
\label{blp_single_spike}

For the BLP, consider first the single-spiked case where the squared prediction error of $\hat S_i^{\tau} = \hat S_i^{\tau,B} = \tau u u^\top Y_i$, with $Y_i = D_i(S_i +\ep_i) = D_i(\ell^{1/2}z_i u +\ep_i)$ is 
\begin{align*}
\|S_i -\hat S_i^{\tau}\|^2 &
= \|\ell^{1/2}z_i u -  \tau u^\top Y_i u \|^2 
=  (\ell^{1/2}z_i -  \tau u^\top Y_i)^2  = [\ell^{1/2}z_i -  \tau( \ell^{1/2}z_i u^\top D_i u + u^\top D_i \ep_{i})]^2\\
&= (1-\tau u^\top D_i u)^2\ell z_i^2 + \tau^2 (u^\top D_i \ep_{i})^2 -  2\tau (1-\tau u^\top D_i \ep_{i}) \ell^{1/2} z_i u^\top D_i \ep_{i}.
\end{align*}

Now, $z_i$ and $\ep_i$ are independent of $D_i$, hence, conditional on $D_i$ we have 
$$\E [\|S_i -\hat S_i^{\tau}\|^2|D_i] = (1-\tau u^\top D_i u)^2\ell + \tau^2  u^\top D_i^2 u .$$
Moreover, $\E u^\top D_i u = \mu$, $\E (u^\top D_i u)^2 = \mu^2 +\sigma^2 \|u\|_4^4 \to \mu^2$---since $u$ is delocalized---and $\E u^\top D_i^2 u = m$, so the overall expectation converges to
$
\E \|S_i -\hat S_i^{\tau}\|^2 \to (1-2\tau \mu+\tau^2 \mu^2)\ell + \tau^2  m$.

This proves that the asymptotic MSE of BLP is $AMSE^{B}(\tau;\ell, \gamma) = (1-\tau\mu)^2\ell + \tau^2 m$. The minimum is achieved for $\tau^* = \mu \ell/(\mu^2 \ell + m)$, and equals  $m\ell/(\mu^2 \ell+m)$. 

In the multispiked case, the prediction error is 
\begin{align*}
\|S_i -\hat S_i^{\tau}\|^2 &
= \|\sum_{k=1}^r \ell_k^{1/2}z_{ik} u_k -  \tau_k u_{k}^\top Y_{i} u_k\|^2 \\
&= \sum_{k=1}^r (\ell_k^{1/2}z_{ik}-  \tau_k u_{k}^\top Y_{i})^2 
+\sum_{k\neq j}
 (\ell_k^{1/2}z_{ik}-  \tau_k u_{k}^\top Y_{i})
  (\ell_j^{1/2}z_{ij}-  \tau_j u_{j}^\top Y_{i})
  \cdot u_k^\top u_j
\end{align*}
Now, $z_{ik},z_{ij}$ are independent if $k\neq j$, hence we have if $k\neq j$
$$\E (\ell_k^{1/2}z_{ik}-  \tau_k u_{k}^\top Y_{i})
  (\ell_j^{1/2}z_{ij}-  \tau_j u_{j}^\top Y_{i}) = 
  \tau_k \tau_j \E u_{k}^\top Y_{i} u_{j}^\top Y_{i}.
  $$
Using $Y_i = D_i X_i$, $X_i = \sum_{k=1}^r\ell_k^{1/2} z_{ik}  u_k + \ep_i$, and the independence properties described above, we have  
\begin{align*}
&\E u_{k}^\top Y_{i} u_{j}^\top Y_{i}  
= \E u_{k}^\top D_i X_i X_i^\top D_i u_j 
= \E u_{k}^\top D_i (\sum_{m=1}^r\ell_m u_{m}u_{m}^\top + I_p)  D_i u_j\\
 &=\sum_{m=1}^r\ell_m \cdot \E (u_{k}^\top D_i u_{m} \cdot  u_{j}^\top D_i u_{m})  
 + \E u_{k}^\top D_i^2 u_{j} \\
  &=\sum_{m=1}^r\ell_m \cdot [\mu^2 (u_{k}^\top u_{m}) \cdot (u_{j}^\top u_{m}) +\sigma^2   (u_{k} \odot u_{m})^\top  \cdot (u_{j} \odot u_{m})]
 + m u_{k}^\top u_{j}.
\end{align*}
Above we denoted by $c  = a\odot b$ the vector with entries $c_j = a_j b_j$. Since the vectors $u_k$ are delocalized and we assumed $u_k^\top u_l \to 0$ for $k\neq l$, it is easy to see that the entire expression converges to zero. This shows that 
\[
\E \|S_i -\hat S_i^{\tau}\|^2 - \E \sum_{k=1}^r (\ell_k^{1/2}z_{ik}-  \tau_k u_{k}^\top Y_{i})^2 \to 0.
\]
Therefore, the asymptotic MSE decouples over the different PCs. Therefore, we can use our previous results about the asymptotic MSE in the single-spiked model, as soon as we can show that the same formulas for the MSE of specific coordinates given in Sec.\ \ref{blp_single_spike} hold in this setting. However, this is easy to see using a calculation similar to the one given above. Indeed, we have
\begin{align*}
\E (\ell_k^{1/2}z_{ik}-  \tau_k u_{k}^\top Y_{i})^2 
&=\E (\ell_k^{1/2}(1 - \tau_k   u_{k}^\top D_i u_{k}) z_{ik}-   \sum_{m\neq k}\tau_k \ell_m^{1/2} z_{im}  u_{k}^\top D_i u_{m} - \tau_k u_{k}^\top D_i \ep_{i})^2 \\
&= \ell_k \E (1 - \tau_k u_{k}^\top D_i u_k)^2 +  
\tau_k^2\sum_{m\neq k}\ell_m \E (u_{k}^\top D_i u_{m})^2 + \tau_k^2 \E u_{k}^\top D_i^2 u_k.
\end{align*}
For $k \neq m$, the term $\E (u_{k}^\top D_i u_{m})^2 = \mu^2 (u_{k}^\top u_{m})^2 +\sigma^2 \|u_k\odot u_m\|^2 \to 0$, while the other terms can be evaluated as before, leading to the same formulas. Therefore, the asymptotic limit of the MSE is the same as that in the single-spiked case. This finishes the claims about BLP.

\subsubsection{Empirical BLP}
The squared error of a general EBLP denoiser is
\begin{align*}
\|S_i -\hat S_i^{\eta}\|^2 &
= \|\sum_{k=1}^r \ell_k^{1/2}z_{ik} u_k  
- \sum_{k=1}^r \eta_k \hat u_k \hat u_{k}^\top Y_{i}\|^2 \\
&= \sum_{k=1}^r \|\ell_k^{1/2}z_{ik} u_k -\eta_k \hat u_k \hat u_{k}^\top Y_{i}\|^2 
+\sum_{k\neq j}
 (\ell_k^{1/2}z_{ik} u_k -\eta_k \hat u_k \hat u_{k}^\top Y_{i})^\top
  (\ell_j^{1/2}z_{ij} u_j -\eta_j \hat u_j \hat u_{j}^\top Y_{i})
\end{align*}
The following lemma, proved in Sec.\ \ref{denoise_pf_cross}, shows that the cross terms vanish:

\begin{lem}[Vanishing cross term in MSE]
\label{denoise_lim_multi}
The cross terms in the MSE of EBLP denoisers vanish, i.e., for all $k\neq j$,  
$\E(\ell_k^{1/2}z_{ik} u_k -\eta_k \hat u_k \hat u_{k}^\top Y_{i})^\top
  (\ell_j^{1/2}z_{ij} u_j -\eta_j \hat u_j \hat u_{j}^\top Y_{i}) \to 0$
\end{lem}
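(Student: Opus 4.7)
The plan is to expand the inner product appearing in the lemma into four scalar pieces and treat each separately. Writing out $(\ell_k^{1/2}z_{ik} u_k -\eta_k \hat u_k \hat u_{k}^\top Y_{i})^\top (\ell_j^{1/2}z_{ij} u_j -\eta_j \hat u_j \hat u_{j}^\top Y_{i})$, two of the pieces die immediately for $k\neq j$: the piece $\ell_k^{1/2}\ell_j^{1/2} z_{ik}z_{ij}(u_k^\top u_j)$ has zero expectation because the latent variables are independent standardized, $\E[z_{ik}z_{ij}]=0$, and the piece $\eta_k\eta_j\, Y_i^\top \hat u_k \hat u_k^\top \hat u_j \hat u_j^\top Y_i$ vanishes pathwise thanks to the orthogonality $\hat u_k^\top \hat u_j = 0$ of empirical singular vectors. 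By the $k\leftrightarrow j$ symmetry of the two remaining mixed pieces, the whole lemma reduces to proving $\E[\,z_{ik}\, u_k^\top \hat u_j\, \hat u_j^\top Y_i\,] \to 0$ for $k\neq j$.

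The key ingredient is Cor.\ \ref{standard_spike_proj}, which yields $(u_k^\top \hat u_j)^2 \to 0$ almost surely for $k\neq j$, so the integrand already factors into a vanishing inner product $u_k^\top \hat u_j$ times quantities we expect to be $O(1)$. To see that $\hat u_j^\top Y_i$ is indeed $O(1)$, I would use the SVD identity $\hat u_j^\top Y_i = \sigma_j(Y)\hat z_{ji}$, where $\sigma_j(Y)/\sqrt{n}$ converges a.s.\ by the spike convergence in Cor.\ \ref{standard_spike_proj} and $\hat z_{ji}$ is the $i$-th coordinate of the unit-norm left singular vector $\hat z_j$; a delocalization estimate for $\hat z_j$ (which can be read off from its outlier eigenvector representation) gives $\hat z_{ji}=O(1/\sqrt{n})$ with high probability, so $\hat u_j^\top Y_i = O(1)$ in a uniformly integrable sense.

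The main obstacle is converting the a.s.\ convergence $u_k^\top \hat u_j \to 0$ and the boundedness of $\hat u_j^\top Y_i$ into convergence of the expectation, which is complicated by the in-sample dependence of $\hat u_j$ on $z_{ik}$ and $Y_i$. My plan is to invoke the outlier equation method introduced in Sec.\ \ref{sec:denoise} (see Lemma \ref{outlier_eq_lem}): this expresses $\hat u_j$ as a resolvent $(t_j^2 I_p - N^\top N)^{-1}$ acting on a data-dependent vector, so that $u_k^\top \hat u_j$ and $\hat u_j^\top Y_i$ become bilinear forms in deterministic $u_k$, in the single observation $Y_i$, and in the resolvent, all of which can be analyzed by the quadratic-form limits established in the proof of Thm.\ \ref{spike_proj_multi}. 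In particular, the vanishing off-diagonal limits from Lemma \ref{qf_lem3} for $k\neq j$ supply the convergence to zero. Uniform integrability needed to pass from pathwise to $L^1$ convergence follows from the assumed fourth-moment bounds on $z_{ik}$, $\ep_{ij}$, $E_{ij}$ together with the uniform operator-norm bound on the resolvents off the limiting spectrum.
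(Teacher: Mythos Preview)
Your decomposition into four pieces and the handling of the two trivial ones (independence of $z_{ik},z_{ij}$ for the pure-signal term; orthogonality $\hat u_k^\top \hat u_j=0$ for the pure-empirical term) is exactly what the paper does. The reduction to $\E[z_{ik}\,(u_k^\top \hat u_j)\,(\hat u_j^\top Y_i)]\to 0$ is also the same.

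Where you diverge is in the treatment of this cross term, and your route is considerably heavier than needed. You propose to control $\hat u_j^\top Y_i$ via the SVD identity $\hat u_j^\top Y_i=\sigma_j(Y)\hat z_{ji}$ together with a delocalization estimate for the left singular vector $\hat z_j$, and then to invoke the outlier-equation machinery and the off-diagonal bilinear limits of Lemma~\ref{qf_lem3}. The delocalization of $\hat z_j$ is not established anywhere in the paper and would itself require nontrivial work; and the outlier-equation method, while it would eventually give what you want, is overkill here.

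The paper instead uses a one-line Cauchy--Schwarz argument packaged as the ``convergence reduction'' Lemma~\ref{conv_reduce}. Since $(u_k^\top\hat u_j)^2\to 0$ a.s.\ and $|u_k^\top\hat u_j|\le 1$, one only needs a uniform bound on $\E(\hat u_j^\top Y_i)^2$. That bound comes from the exchangeability identity
\[
\E(\hat u_j^\top Y_i)^2=\frac{1}{n}\sum_{i=1}^n \E\,\hat u_j^\top Y_iY_i^\top\hat u_j=\E\,\sigma_j(\tilde Y)^2,
\]
which is uniformly bounded by standard operator-norm bounds on sample covariance matrices. This bypasses entirely the need for delocalization of $\hat z_j$ or resolvent computations. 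Your approach is not wrong, but it trades a two-line argument for a page of machinery, and it leans on a delocalization statement you would still have to prove.
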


Therefore, the limit of the MSE in the multispiked case decouples into the MSEs for the individual spikes:
$$
\E\|S_i -\hat S_i^{\eta}\|^2 - \sum_{k=1}^r \E \|\ell_k^{1/2}z_{ik} u_k -\eta_k \hat u_k \hat u_{k}^\top Y_{i}\|^2 \to 0.
$$
To evaluate the limiting MSE, we rely on the following lemma (proved in Sec.\ \ref{denoise_pf}), which finds limiting expectations of inner products of the empirical singular vector $\hat u_B$ with the samples $Y_i$ and the population singular vector $u$. 

\begin{lem}[Denoising risk limit]
\label{denoise_lim}
We have the following convergences: 
\benum
\item $ \E  (\hat u_{k}^\top Y_{i})^2   \to m \lambda_k^2$, where $\lambda_k^2 = t^2(\delta \ell_k;\gamma)$ is defined in Eq.  \eqref{spike_eq_white}.
\item $ \E z_i (\hat u_{k}^\top Y_{i}) (u_k^\top \hat u_k)\to  \mu \ell_k^{1/2} c_k^2 \cdot \beta_k$, where $c_k^2 = c^2(\delta\ell_k;\gamma)$ is defined in Eq. \eqref{cos_eq_white}, and
\beq
\label{beta_eq}
\beta = 1 +\frac{\gamma}{\delta\ell_k}.
\eeq

\eenum
\end{lem}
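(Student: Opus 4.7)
The plan is to convert both expectations into simple spectral functionals of the data matrix $\tilde Y$ by combining the singular value decomposition with the exchangeability of the samples, and then to invoke Cor.\ \ref{standard_spike_proj} together with the outlier-equation analysis behind the proof of Thm.\ \ref{spike_proj_multi}. The key identity is $\hat u_k^\top Y_i = n^{1/2}\sigma_k(\tilde Y)\hat Z_{ki}$, which follows from the SVD $\tilde Y = \sum_j \sigma_j(\tilde Y)\hat Z_j\hat u_j^\top$ and $\tilde Y = n^{-1/2}Y$. For Part 1, summing the squares gives $\sum_{i=1}^n(\hat u_k^\top Y_i)^2 = n\sigma_k^2(\tilde Y)\|\hat Z_k\|^2 = n\sigma_k^2(\tilde Y)$. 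Exchangeability of the samples forces $\E(\hat u_k^\top Y_i)^2$ to be constant in $i$ and hence equal to $\E\sigma_k^2(\tilde Y)$, and Cor.\ \ref{standard_spike_proj} then gives $\sigma_k^2(\tilde Y)/m \to \lambda(\delta\ell_k) = \lambda_k^2$ a.s. Standard uniform integrability for spiked models (for instance via $\sigma_k^2\le\|\tilde Y\|_F^2$ together with the assumed moment bounds) upgrades this to convergence in expectation.

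For Part 2, the same SVD identity together with $Z_k = n^{-1/2}(z_{1k},\ldots,z_{nk})^\top$ yields
\[
\sum_{i=1}^n z_{ik}(\hat u_k^\top Y_i) = n^{1/2}Z_k^\top Y\hat u_k = n\sigma_k(\tilde Y)(Z_k^\top \hat Z_k).
\]
Multiplying by the $i$-independent scalar $u_k^\top\hat u_k$ and averaging over $i$ via exchangeability reduces the claim to evaluating $\lim\E[\sigma_k(\tilde Y)(u_k^\top\hat u_k)(Z_k^\top\hat Z_k)]$. Each factor has an explicit a.s.\ limit: $\sigma_k(\tilde Y)\to m^{1/2}\lambda^{1/2}(\delta\ell_k)$ and $(u_k^\top\hat u_k)^2\to c^2(\delta\ell_k)$ from Cor.\ \ref{standard_spike_proj}, while $(Z_k^\top\hat Z_k)^2\to \tilde c^2(\delta\ell_k)$, with $\tilde c^2(\ell;\gamma) = (1-\gamma/\ell^2)/(1+1/\ell)$, from the left-singular-vector analogue of the norm identity \eqref{norm_id}. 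A direct algebraic computation then shows $m\,\lambda(\delta\ell_k)\,c^2(\delta\ell_k)\,\tilde c^2(\delta\ell_k) = \mu^2\ell_k(1-\gamma/(\delta\ell_k)^2)^2$, so the product limit equals $\mu\ell_k^{1/2}(1-\gamma/(\delta\ell_k)^2) = \mu\ell_k^{1/2}c_k^2\beta_k$, matching the claim; convergence in expectation is again obtained by uniform integrability, using $|(u_k^\top\hat u_k)(Z_k^\top\hat Z_k)|\le\|Z_k\|$ and Cauchy--Schwarz with $\sigma_k$.

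The main obstacle is establishing the limit of the left singular vector cosine $(Z_k^\top\hat Z_k)^2 \to \tilde c^2(\delta\ell_k)$, since Cor.\ \ref{standard_spike_proj} only states the corresponding right-singular-vector result. I would derive this by repeating the outlier-equation argument of Sec.\ \ref{setup}, swapping the roles of left and right, with Lemmas \ref{noise_lem}--\ref{qf_lem2} applying verbatim; the parallel formula is also implicit in the singular-value shrinkage analysis of \cite{gavish2014optimal} quoted in Sec.\ \ref{den_sv_shr}. A secondary bookkeeping point is the sign of $(u_k^\top\hat u_k)(Z_k^\top\hat Z_k)$: the product is invariant under the simultaneous sign flip of $\hat u_k$ and $\hat Z_k$ inherent in the SVD, and since the rank-one signal component of $\tilde Y$ has left and right directions proportional to $Z_k$ and $\nu_k$, both inner products align with positive sign in the supercritical regime, so the product tends to $+c_k\tilde c_k$.
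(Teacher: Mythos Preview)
Your Part 1 is exactly the paper's argument: the exchangeability identity $\E(\hat u_k^\top Y_i)^2=\E\sigma_k(\tilde Y)^2$ followed by Cor.~\ref{standard_spike_proj} and uniform integrability.

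Your Part 2, however, is genuinely different from---and considerably simpler than---the paper's proof. The paper expands $Y_i=D_i(\ell^{1/2}z_iu+\ep_i)$, uses a convergence-reduction lemma to peel off the factor $u^\top\hat u$, and then proves an auxiliary lemma (Lemma~\ref{denoise_aux_2}) that evaluates $\E z_i^2(\hat u^\top D_iu)$ and $\E z_i(\hat u^\top D_i\ep_i)$ separately via the outlier equation~\eqref{outlier_eq} and rank-one perturbation formulas for the resolvent, carefully handling the dependence between $D_i$ and $\hat u$. You bypass all of this: you push the exchangeability trick one step further to obtain $\E[z_{ik}(\hat u_k^\top Y_i)(u_k^\top\hat u_k)]=\E[\sigma_k(\tilde Y)(u_k^\top\hat u_k)(Z_k^\top\hat Z_k)]$, and then read off the limit of each factor from results already established in the proof of Thm.~\ref{spike_proj_multi}. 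Your algebraic identity $m\,\lambda\,c^2\,\tilde c^2=\mu^2\ell(1-\gamma/(\delta\ell)^2)^2$ and the identification $c^2\beta=1-\gamma/(\delta\ell)^2$ are both correct. The left-cosine limit $(Z_k^\top\hat Z_k)^2\to\tilde c^2(\delta\ell_k)$ and the positivity of the product $(u_k^\top\hat u_k)(Z_k^\top\hat Z_k)$ are indeed implicit in the kernel equation in Sec.~\ref{setup} (the relation $\eta_2\approx -t\,\underline m(t^2)(\xi\ell)^{1/2}\eta_1$ with $-\underline m(t^2)>0$ forces $\eta_1\eta_2>0$), so no new work is needed there.

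What each approach buys: yours is shorter and stays entirely within spectral quantities of $\tilde Y$ whose limits are already known, so it proves the lemma with minimal overhead. The paper's route, by contrast, develops the outlier-equation method (Lemma~\ref{outlier_eq_lem}) as a standalone tool for computing limits $\E w^\top\hat u_k$ for general vectors $w$, which is conceptually more general even though the present lemma does not require that generality.
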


The key technical innovation is the proof of the second part. The main argument is an extension of the technique introduced by \cite{benaych2012singular} for characterizing the limits of the inner products $u_j^\top \hat u_k$ between population and sample eigenvectors. For our proof, we need to extend this technique to characterize limits $\smash{w^\top \hat u_k}$ for \emph{arbitrary random vectors} $w$. Since this technique relies on an equation for the ``outliers'' among the eigenvalues of a finite-rank perturbation of a matrix in the terminology commonly used in random matrix theory \citep[see e.g.,][]{tao2013outliers}, we call this the \emph{outlier equation} method. 

Applying the outlier equation method is nontrivial in our case, because the random vectors $w$ to which we need to apply it---e.g., $X_i$---are dependent with $\hat u_k$. For this reason, we need to use rank-one perturbation formulas once again to show that the dependence is negligible. We envision that the proof method could have several other applications.

Going back to our main argument, based on Lemma \ref{denoise_lim}, the limit of the MSE is the following deterministic quantity:
$AMSE = \sum_{k=1}^r \ell_k +\eta_k^2 \cdot m t_k^2 - 2\eta_k \cdot \mu  \ell_k c_k^2 \cdot \beta_k.$
The optimal $\eta$ minimizing the AMSE has $\eta_k^* = \mu \ell_k  c_k^2 \cdot \beta_k/[m t_k^2]$ $= \mu \ell_k  c_k^2/[m (1+\delta \ell_k)]$. This finishes the EBLP analysis.

\subsubsection{Proof of Lemma \ref{denoise_lim_multi}}
\label{denoise_pf_cross}
We need to show that 
$\E(\ell_k^{1/2}z_{ik} u_k -\eta_k \hat u_k \hat u_{k}^\top Y_{i})^\top
  (\ell_j^{1/2}z_{ij} u_j -\eta_j \hat u_j \hat u_{j}^\top Y_{i}) \to 0.$
We expand the parantheses and note that the first term is $\E (\ell_k^{1/2}z_{ik} u_k)^\top
  \ell_j^{1/2}z_{ij} u_j = 0$, while the last term is a multiple of $\E \hat u_k ^\top \hat u_j \cdot \hat u_{k}^\top Y_{i} \cdot \hat u_{j}^\top Y_{i}=0$ (because $\hat u_k^\top \hat u_j = 0$ for $k \neq j$). Thus it is enough to show the following claim for $k \neq j$:
$$\E z_{ij} \hat u_k ^\top u_j \cdot \hat u_{k}^\top Y_{i} \to 0$$
For this, we have $u_k^\top  \hat u_j \to 0$ a.s., by Cor.\ \ref{standard_spike_proj}, thus also  $z_{ij} u_k^\top  \hat u_j \to 0$, so  by convergence reduction (Lemma \ref{conv_reduce}) it is enough to show that $\smash{\E (\hat u_{k}^\top Y_{i})^2}$ is uniformly bounded. However, as in the proof of Part 1 of Lemma \ref{denoise_lim} in Sec.\ \ref{denoise_lim_p2}, we obtain that $\smash{\E (\hat u_{k}^\top Y_{i})^2}$ is uniformly bounded. This finishes the proof of Lemma \ref{denoise_lim_multi}.

\subsection{Proof of Lemma \ref{denoise_lim}}
\label{denoise_pf}
For simplicity of exposition, we first prove the single-spiked case, when $r=1$. The extension to the multispiked case is presented in Sec.\ \ref{denoise_pf_multi}.
\subsubsection{Part 1}
\label{denoise_lim_p2}

Similar to \cite{lee2010convergence} in the unreduced case, we use the following exchangeability argument:
$$
\E  (\hat u^\top Y_i)^2  
=  \E  \hat u^\top Y_i  Y_i^\top \hat u 
= n^{-1} \sum_{i=1}^{n}  \E  \hat u^\top Y_i  Y_i^\top \hat u 
= n^{-1} \E  \hat u^\top Y^\top  Y \hat u   
= \E \sigma_1(\tilde Y)^2.$$
Now $\sigma_1(\tilde Y)^2 \to m \cdot \lambda(\delta \ell;\gamma)$ a.s. by Cor.\ \ref{standard_spike_proj}. Moreover, by a bound on the expectation of top eigenvalues of sample covariance matrices such as that in \cite{srivastava2013covariance}, it is easy to see that $\E \sigma_1(\tilde Y)^2$ is uniformly bounded. Hence, $\E\sigma_1(\tilde Y)^2 \to m \cdot \lambda(\delta \ell;\gamma)$, finishing the proof. 

\subsubsection{Part 2}

As a preliminary remark for Part 2, denoting $t^2 = \lambda(\delta\ell;\gamma)$ we note that $\beta = 1 + \gamma/(\delta\ell) = 1 -m \cdot \gamma m(t^2)/[1+m \cdot \gamma m(t^2)]$. By a simple calculation, this is equivalent to $m \cdot m(t^2) = -1/(\gamma + \delta \ell)$. This can be checked as in the proof of Cor.\ \ref{standard_spike_proj} in Sec.\ \ref{pf_standard_spike_proj}.

Therefore it is enough to show that 
\[ \E z_i (\hat u^\top Y_{i}) (u^\top \hat u)
\to   \ell^{1/2} c^2\left[\mu +\frac{-m \cdot \gamma m(t^2)}{1+m \cdot \gamma m(t^2)}\right]
 = \mu \ell^{1/2} c^2 + \ell^{1/2} c^2 \cdot \frac{-m \cdot  \gamma m(t^2)}{1+m \cdot \gamma m(t^2)}.\]

Expanding by using $Y_i = D_i X_i$, $X_i = \ell^{1/2} z_i u + \ep_i$, we have 

\beq
\label{expand_cross}
\E z_i (\hat u^\top Y_{i}) (u^\top \hat u) = \ell^{1/2} \E z_i^2 (\hat u^\top D_i u) (u^\top \hat u) +  \E z_i (\hat u^\top D_i \ep_{i}) (u^\top \hat u).
\eeq

Therefore, it is enough to show that $ \E z_i^2 (\hat u^\top D_i u) (u^\top \hat u) \to \mu c^2$ and $\E z_i (\hat u^\top D_i \ep_{i}) (u^\top \hat u) \to  \ell^{1/2} \cdot c^2 \cdot [-m \cdot \gamma m(t^2)]/[1+m \cdot \gamma m(t^2)].$ Since we already know that $(u^\top\hat u)^2 \to c^2$ a.s., our first step is to reduce these two claims by ``getting rid'' of the $u^\top\hat u$ term. For this and similar arguments, we will rely repeatedly on the following simple lemma:
\begin{lem}[Convergence Reduction]
\label{conv_reduce}
Suppose $W_n$,$Y_n$ are random variables such that $\E W_n \to D$ for a constant $D$, $ \E W_n^2$ is uniformly bounded, $Y_n \to C$ a.s. for some constant $C$, and $Y_n$ is a.s. uniformly  bounded. Then 
$\E W_n Y_n \to DC$.

In the special case when $C=0$, the statement $\E W_n \to D$ is not needed.
\end{lem}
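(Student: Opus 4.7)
The plan is to split $\E W_n Y_n$ into a piece whose limit is handled by the hypothesis $\E W_n \to D$ and a remainder that is controlled by Cauchy--Schwarz. Specifically, I would write
\[
\E W_n Y_n = \E\bigl[W_n (Y_n - C)\bigr] + C\, \E W_n,
\]
so that the second term tends to $CD$ by assumption. It then suffices to show that the first term tends to $0$.

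For that first term, by Cauchy--Schwarz,
\[
\bigl|\E W_n(Y_n - C)\bigr| \le \bigl(\E W_n^2\bigr)^{1/2}\bigl(\E (Y_n - C)^2\bigr)^{1/2}.
\]
The first factor is uniformly bounded by hypothesis. For the second factor, the assumption that $Y_n$ is almost surely uniformly bounded means there is a constant $M$ and an event of probability one on which $|Y_n| \le M$ for all $n$; on that event $|Y_n - C|^2 \le (M + |C|)^2$. Combined with $Y_n \to C$ a.s., the dominated convergence theorem gives $\E(Y_n - C)^2 \to 0$, which finishes the main claim.

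For the special case $C = 0$, I would skip the decomposition entirely and apply Cauchy--Schwarz directly:
\[
\bigl|\E W_n Y_n\bigr| \le \bigl(\E W_n^2\bigr)^{1/2}\bigl(\E Y_n^2\bigr)^{1/2},
\]
where $\E Y_n^2 \to 0$ by the same bounded convergence argument. No hypothesis on $\E W_n$ itself is used, only the uniform bound on $\E W_n^2$, which matches the stated refinement.

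There is no substantive obstacle here; the only thing to be careful about is the precise reading of ``a.s.\ uniformly bounded'' — interpreting it as the existence of a deterministic $M$ and a single almost-sure event on which $\sup_n |Y_n| \le M$ is what makes the dominated convergence step immediate. If instead one only has $\sup_n |Y_n| < \infty$ a.s.\ without a deterministic bound, one would additionally truncate $Y_n$ at a large level and let the truncation level tend to infinity, but the stated hypothesis avoids that nuisance.
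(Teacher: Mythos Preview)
Your proof is correct and essentially identical to the paper's: the same decomposition $\E W_n Y_n = \E W_n(Y_n - C) + C\,\E W_n$, the same Cauchy--Schwarz bound on the first term, and the same appeal to dominated convergence for $\E(Y_n - C)^2 \to 0$. The paper phrases it as a reduction to the case $C=0$ rather than working with $Y_n - C$ directly, but that is purely cosmetic.
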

\begin{proof}
We have $\E W_n Y_n  = \E W_n (Y_n-C) + C\E W_n$, so it is enough to show $\E W_n (Y_n-C) \to 0$.  Therefore we can assume without loss of generality that $C=0$. In that case, 
$$|\E W_n Y_n|\le|\E W_n^2|^{1/2} |\E Y_n^2|^{1/2} \le M_W |\E Y_n^2|^{1/2} \to 0,$$
because $ \E W_n^2$ are uniformly bounded; and because $Y_n \to 0$ a.s. and $Y_n$ are bounded, so $\E (Y_n)^2 \to 0$ by the dominated convergence theorem. Clearly, once we assumed $C=0$ we did not use $\E W_n \to D$. This finishes the proof.
\end{proof}

To use this lemma, let us choose the orientation of $\hat u$ such that $u^\top \hat u\ge 0$. Therefore, we know from our main results (e.g., Thm.\ \ref{spike_proj_multi}), that $u^\top \hat u\to c$ a.s. Moreover, since $\|\hat u\|=1$, it is easy to see that we can apply Lemma \ref{conv_reduce} to both terms in \eqref{expand_cross}. Specifically, for the first term, we apply it with $W_n = z_i^2 (\hat u^\top D_i u)$ and $Y_n = u^\top \hat u$; while for the second term we apply it with $W_n =z_i (\hat u^\top D_i \ep_{i})$ and $Y_n = u^\top \hat u$. As mentioned above, this effectively removes the $u^\top \hat u$ terms, and thus simplifies the analysis considerably. Indeed, by Lemma \ref{conv_reduce} we conclude that Part 2 follows from the following auxiliary convergence results, proved in Sec.\ \ref{denoise_aux_2_pf}: 

\begin{lem}[Denoising risk auxiliary limits: Part 2]
\label{denoise_aux_2}
We have the following convergence results: 
\benum
\item $ \E  z_i^2 (\hat u^\top D_i u)   \to \mu c$, where $c^2$ is defined in Eq. \eqref{cos_eq_white}.
\item $\E  z_i (\hat u^\top D_i \ep_{i})\to  \ell^{1/2} \cdot c \cdot \frac{-m \cdot \gamma m(t^2)}{1+m \cdot \gamma m(t^2)}$, where $t^2$ is defined in Eq. \eqref{spike_eq_white}, and $m$ is the Stieltjes transform of the standard Marchenko-Pastur law. 
\eenum
\end{lem}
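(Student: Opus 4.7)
My strategy is to adapt the \emph{outlier equation} technique introduced in the proof of Thm.\ \ref{spike_proj_multi} from inner products between $\hat u$ and deterministic vectors to inner products $\hat u^\top w$ with random vectors $w$ that are themselves correlated with $\hat u$. Combining the SVD relations $\tilde Y \hat u = t \hat Z$ and $\tilde Y^\top \hat Z = t \hat u$ yields the identity
\[
\hat u = (\mu^2\ell)^{1/2}\bigl[\,t\,\eta_2\,R\,u + \eta_1\,R\,N^\top Z\,\bigr],\qquad R := (t^2 I_p - N^\top N)^{-1},
\]
with $t = \sigma_1(\tilde Y)$, $\eta_1 = u^\top \hat u$, $\eta_2 = Z^\top \hat Z$. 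Plugging this into $\hat u^\top w$ reduces each of Parts 1 and 2 to the analysis of bilinear forms of type $u^\top R w$ and $Z^\top N R w$ multiplied by scalars whose almost sure limits are already known from Cor.\ \ref{standard_spike_proj}.

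The main obstacle, absent from the deterministic-$w$ case of \cite{benaych2012singular}, is that $w = z_i^2 D_i u$ (for Part 1) or $w = z_i D_i \ep_i$ (for Part 2) involves the $i$-th row's randomness $(D_i, z_i, \ep_i)$, which also appears inside $R$, $N$, and $Z$. To decouple, I would use a leave-one-out construction: let $N^{(i)}, Z^{(i)}$ be obtained by zeroing out row $i$ and set $R^{(i)} := (t^2 I_p - (N^{(i)})^\top N^{(i)})^{-1}$. The Sherman--Morrison identity
\[
R = R^{(i)} + \frac{n^{-1}\,R^{(i)}\,\ep_i^{*}\,(\ep_i^{*})^\top R^{(i)}}{1 - n^{-1}(\ep_i^{*})^\top R^{(i)} \ep_i^{*}},
\]
together with the additive decompositions $N = N^{(i)} + n^{-1/2} e_i (\ep_i^{*})^\top$ and $Z = Z^{(i)} + n^{-1/2} z_i e_i$, separates out all dependence on row $i$. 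Crucially, the leave-one-out objects are independent of $(D_i, z_i, \ep_i)$, which enables conditioning first on the leave-one-out data and then using $\E[D_i u \mid R^{(i)},N^{(i)},Z^{(i)}] = \mu u$ and $\E[D_i \ep_i \mid R^{(i)},N^{(i)},Z^{(i)}] = 0$.

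For Part 1, after this decoupling the leading-order contribution to $\E[z_i^2\,\hat u^\top D_i u]$ collapses to $\mu \cdot \E[z_i^2\,\hat u^\top u]$, which converges to $\mu \cdot c$ by a convergence-reduction argument analogous to Lemma \ref{conv_reduce}, together with the uniform boundedness of $\E z_i^4$ and $\hat u^\top u \to c$ a.s. The Sherman--Morrison correction and the discrepancy $D_i u - \mu u$ contribute $o(1)$ thanks to the delocalization bound $\|u\|_\infty \le C\log^B(p)/p^{1/2}$ and the uniform boundedness of $\|R^{(i)}\|_{\op}$ for $t$ above the bulk edge. For Part 2, the same decoupling makes the analogous leading-order term \emph{vanish}, because $\E[D_i \ep_i]=0$ and the leave-one-out factors are independent of $\ep_i$; the nontrivial limit must therefore come entirely from the Sherman--Morrison correction. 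Expanding to next order and retaining only terms that survive the conditional expectation, the surviving contribution is proportional to $n^{-1}\E[(\ep_i^{*})^\top R^{(i)}\, (D_i \ep_i)]$ divided by the denominator $1 - n^{-1}(\ep_i^{*})^\top R^{(i)} \ep_i^{*}$. Using $\ep_i^{*} = \mu \ep_i + \sigma E_i X_i$, the moment structure of the independent components $D_i, \ep_i, E_i, z_i$, and the trace-type limits $n^{-1}\tr R^{(i)} \to -\gamma\, m(t^2)$ (standard Lemma B.26 of \cite{bai2009spectral} concentration combined with Lemma \ref{noise_lem}), these combine to yield the factor $-m\gamma m(t^2)/(1+m\gamma m(t^2))$ appearing in the statement.

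The principal difficulty I anticipate is Part 2, where the answer emerges only from a subleading correction: I must control the $O(n^{-1})$ Sherman--Morrison term with enough precision to extract a finite limit while carefully tracking the coupling between $\ep_i^{*}$ (in the rank-one update) and $D_i \ep_i$ (in the target inner product), since these share both $D_i$ and $\ep_i$. A further subtlety is that $R^{(i)}$ depends on the random scalar $t = \sigma_1(\tilde Y)$, which itself is coupled to row $i$; this can be handled by replacing $t$ with its deterministic limit via the continuity of $R$ and $m(\cdot)$ on $(b_H,\infty)$, using the already-established convergence of $\sigma_1(\tilde Y)$ in Cor.\ \ref{standard_spike_proj} and the uniform boundedness of the relevant resolvents shown in Lemma \ref{qf_lem}. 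Once this is done, the algebraic identity $m\cdot m(t^2) = -1/(\delta\ell+\gamma)$ from the proof of Cor.\ \ref{standard_spike_proj} reconciles the resulting expression with $\beta - 1 = \gamma/(\delta\ell)$ appearing downstream in Lemma \ref{denoise_lim}.
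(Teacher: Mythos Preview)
Your proposal is correct and follows essentially the same approach as the paper: express $\hat u^\top w$ via the outlier equation as a combination of the two bilinear forms $w^\top N^\top R Z$ and $w^\top \tilde R u$, then decouple the $i$-th row's randomness $(D_i,z_i,\ep_i)$ from the resolvent using a rank-one (Sherman--Morrison) perturbation, and finish with concentration of quadratic forms and convergence reduction. The paper packages the first step into a standalone lemma (the ``outlier equation method'', Lemma~\ref{outlier_eq_lem}) and then reduces each part to two explicit claims about the limits of the two bilinear forms; your organization is slightly different but the mechanics are identical.

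Two small remarks. First, in Part~2 the nonzero limit does not come from the Sherman--Morrison correction to $R$ applied to the \emph{leading} leave-one-out term; it comes from the row-$i$ piece of $Z^\top N$, namely $n^{-1} z_i(\ep_i^*)^\top$, paired with the full $R$---after which Sherman--Morrison produces the ratio $\beta_i/(1-\beta_i)$ with $\beta_i = n^{-1}(\ep_i^*)^\top R^{(i)}\ep_i^*$. Your displayed surviving term is exactly this quantity, so the computation is right; only the verbal attribution to ``the Sherman--Morrison correction'' is slightly misleading. Second, your overall prefactor $(\mu^2\ell)^{1/2}$ corresponds to the unnormalized $\tilde Y$; the paper works with $m^{-1/2}\tilde Y$ in this section, so be careful to keep the normalization consistent when matching the final constant $\ell^{1/2}$ in the statement.
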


Based on the above lemma, we completed the proof of Part 2 of Lemma \ref{denoise_lim}. We will prove Part 1 of Lemma \ref{denoise_lim} later in Sec.\ \ref{denoise_lim_p2}, because it re-uses many of the techniques and results established in the proof of Part 1. This finishes the proof of Lemma \ref{denoise_lim}.

\subsection{Proof of Lemma \ref{denoise_aux_2}}
\label{denoise_aux_2_pf}

In this lemma, we need to understand the asymptotics of inner products such as $u^\top D_i \hat u$. Previous results by \cite{benaych2012singular} characterized the asymptotics of the cosines $u^\top \hat u$. However, these results do not allow us to understand the asymptotics of the inner products $u^\top D_i \hat u$, due to the dependence of $\hat u$ and $D_i$. Instead, we must go back to first principles, and extend the \emph{outlier equation} technique introduced by \cite{benaych2012singular} to our setting. We will see that the current case is more challenging than the one handled in \cite{benaych2012singular}.

Let us recall the notation from Sec.\ \ref{setup}. According to \eqref{sig_noise_2}, the normalized data matrix $m^{-1/2}\tilde Y = (nm)^{-1/2}Y$ can be written as  $m^{-1/2} \tilde Y = \ell^{1/2}  \cdot Z u^\top + N$. Note here that $u = \nu$. Let here $t = t_p$ be the singular value of $m^{-1/2}\tilde Y$ with left and right singular vectors $\hat Z$, $\hat u$, and suppose that $t$ is not a singular value of $N$. As in Lemmas 4.1 and 5.1 of \cite{benaych2012singular}, we then have the following \emph{outlier equation} that provides an equation for the singular vectors of the perturbed matrix $m^{-1/2}\tilde Y$:
\beq
\label{outlier_eq}
\begin{bmatrix} 
t \cdot (t^2 I_n - NN^\top)^{-1}& 
 (t^2 I_n - NN^\top)^{-1} N \\ 
N^\top (t^2 I_n - NN^\top)^{-1} & 
t \cdot  (t^2 I_p - N^\top N)^{-1}
\end{bmatrix} 
\begin{bmatrix} 
(u^\top \hat u) \cdot Z \\
(Z^\top \hat Z)  \cdot u
\end{bmatrix} 
= 
\frac{1}{\ell^{1/2}}
\begin{bmatrix} \hat Z \\ \hat u \end{bmatrix}
\eeq

The idea is to \emph{take inner products} of the right hand side with the quantity we want to characterize (e.g., inner product with $D_iu$ to understand $u^\top D_i \hat u$), and then evaluate the limit of the quantities on the left hand side. This extends the technique of \cite{benaych2012singular}, who took inner products only with the population and sample singular vectors ($u$ and $\hat u$). In contrast, by using other vectors, such as $D_iu$ and $D_i\ep_{i}$, we are able to extend vastly the reach of their technique. 

To formalize this, let $w$ be a $p$-dimensional vector. By taking the inner product of the outlier equation's last $p$ coordinates with $w$, we obtain the following scalar equation:

\beq
\label{outlier_eq_2}
 (u^\top \hat u) \cdot w^\top N^\top (t^2 I_n - NN^\top)^{-1} Z +
t (Z^\top \hat Z) \cdot  w^\top (t^2 I_p - N^\top N)^{-1}u
=  \frac{1}{\ell^{1/2}} w^\top \hat u.
\eeq

Therefore, we can formalize the outlier equation technique as follows:

\begin{lem}[Outlier equation method]
\label{outlier_eq_lem}
Under the assumptions of Cor. \ref{standard_spike_proj}, suppose that $\hat u$, $\hat Z$ are chosen so that $\hat u^\top u\ge 0$, $\hat Z^\top Z \ge 0$. Suppose moreover that $w=w_p$ is a sequence of random vectors such that the assumptions of Lemma \ref{conv_reduce} for $W_n$ apply to $W_{n1} = w^\top N^\top (t^2 I_n - NN^\top)^{-1} Z$ and $W_{n2} =  w^\top (t^2 I_p - N^\top N)^{-1}u$. Specifically, suppose that $\E W_{n1} \to w^*_1$ and $\E W_{n2} \to w^*_2$. Then the random variables $w^\top \hat u$ also converge in expectation, namely
\beq
\label{outlier_eq_3}
\E w^\top \hat u \to\ell^{1/2} (c(\delta \ell;\gamma) w^*_1+ t  \cdot  \tilde c(\delta \ell;\gamma) w^*_2).
\eeq
Here $c\ge0$ where $c^2$ is defined in Eq. \eqref{cos_eq_white}, while $\tilde c(\delta \ell;\gamma) \ge 0$ is the limit cosine between $Z,\hat Z$, $\tilde c(\ell;\gamma)^2= (1-\gamma/\ell^2)/(1+1\ell)$  if $\ell>\gamma^{1/2}$ and $\tilde c^2=0$ otherwise.

As an important special case, suppose that $w_2^* = -\kappa m(t^2)$, while $w_1^* = 0$. Then 
\beq
\label{outlier_eq_4}
\E w^\top \hat u \to \kappa c.
\eeq
\end{lem}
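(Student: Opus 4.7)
The plan is to take expectations of the outlier equation \eqref{outlier_eq_2} and pass to the limit using the convergence reduction Lemma \ref{conv_reduce} together with the almost sure convergence of the cosine factors. Fix the orientations so $c_p := u^\top \hat u \ge 0$ and $\tilde c_p := Z^\top \hat Z \ge 0$, and let $t_p$ denote the outlier singular value of $m^{-1/2}\tilde Y$; writing $W_{n\alpha}(s)$ to make the $s$-dependence of the resolvents explicit, \eqref{outlier_eq_2} reads
\[
c_p\, W_{n1}(t_p) \;+\; t_p\, \tilde c_p\, W_{n2}(t_p) \;=\; \ell^{-1/2}\, w^\top \hat u.
\]

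The first step is to show that $t_p$ can be replaced by the deterministic limit $t$ inside $W_{n1}, W_{n2}$ with negligible error in expectation. By Cor.\ \ref{standard_spike_proj} applied to $\tilde Y$ and to $\tilde Y^\top$, we have $t_p \to t$, $c_p \to c(\delta\ell;\gamma)$ and $\tilde c_p \to \tilde c(\delta\ell;\gamma)$ almost surely, and by Lemma \ref{noise_lem} the resolvents $(s^2 I - NN^\top)^{-1}$ and $(s^2 I - N^\top N)^{-1}$ have a.s.\ uniformly bounded operator norm in a neighborhood of $t$. The resolvent identity then yields $|W_{n\alpha}(t_p) - W_{n\alpha}(t)| \le C\,|t_p - t|\,\|w\|\,\|Z\|$ a.s.\ for a random but a.s.\ bounded constant $C$, and this error vanishes in $L^1$ thanks to the hypothesized $L^2$-boundedness of the $W_{n\alpha}$ together with the moment control on $\|w\|$ and $\|Z\|$ implied by that boundedness.

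The second step applies Lemma \ref{conv_reduce} term by term. For the first term, take $W_n = W_{n1}(t)$ (with $\E W_n \to w_1^*$ and uniformly bounded second moment by hypothesis) and $Y_n = c_p$ (a.s.\ convergent and deterministically bounded in $[0,1]$); this gives $\E[c_p W_{n1}(t)] \to c\, w_1^*$. An analogous application to the second term, with $t_p$ absorbed into the $Y_n$ slot via $t_p \to t$ a.s., yields $\E[t_p\,\tilde c_p\,W_{n2}(t)] \to t\,\tilde c\,w_2^*$. Taking expectations of the rearranged outlier equation and multiplying through by $\ell^{1/2}$ then produces \eqref{outlier_eq_3}.

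For the special case \eqref{outlier_eq_4} with $w_1^* = 0$ and $w_2^* = -\kappa\, m(t^2)$, substitution into \eqref{outlier_eq_3} reduces the claim to the algebraic identity $-\ell^{1/2}\, t\, \tilde c\, m(t^2) = c$; this follows by squaring and combining $c^2/\tilde c^2 = m(t^2)/\underline m(t^2)$ with the outlier-location equation $t^2\, m(t^2)\,\underline m(t^2) = 1/(\delta\ell)$ from Sec.\ \ref{red_stand_spiked}. The main technical obstacle is Step~1: transferring the expectation from random $t_p$ to deterministic $t$. Morally this is smooth resolvent perturbation, but rigorously it requires the a.s.\ separation of $t_p$ from the bulk edge (Lemma \ref{noise_lem}) to be combined with the $L^2$-hypothesis on $W_{n\alpha}$ so that a dominated-convergence argument applies; everything else reduces to bookkeeping with Lemma \ref{conv_reduce}.
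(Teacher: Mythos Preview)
Your argument for the main statement \eqref{outlier_eq_3} follows the paper's approach: take inner products of the outlier equation with $w$, then pass to the limit via Lemma~\ref{conv_reduce} using the a.s.\ convergence of $u^\top\hat u$ and $Z^\top\hat Z$. You are more explicit than the paper about the random--vs--deterministic $t$ issue, which the paper sweeps under the rug (its proof is just ``follows from the discussion before the lemma''). However, your justification for Step~1 has a small gap: the uniform $L^2$-boundedness of the scalar $W_{n\alpha}$ does \emph{not} by itself imply moment control on the vector norm $\|w\|$, so the bound $C|t_p-t|\,\|w\|\,\|Z\|$ cannot be pushed to $L^1$ from that hypothesis alone. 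In every application in the paper $\|w\|$ is directly controlled, so this is fixable, but as stated the implication you claim does not hold. A cleaner route is to note that the paper's Lemma~\ref{qf_lem} establishes \emph{uniform} convergence in $t>b+\varepsilon$, which is what actually transfers the limit from $t_p$ to $t$.

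For the special case \eqref{outlier_eq_4}, your approach differs from the paper's. You verify the identity $-\ell^{1/2}\,t\,\tilde c\,m(t^2)=c$ by direct algebra via $c^2/\tilde c^2 = m(t^2)/\underline m(t^2)$ and the spike equation. The paper instead observes that for the baseline choice $w=u$ one knows independently (from Cor.~\ref{standard_spike_proj}) that $\E u^\top\hat u \to c$, while Lemma~\ref{qf_lem} gives $w_1^*=0$, $w_2^*=-m(t^2)$, i.e.\ $\kappa=1$; the general case then follows by linearity in $w_2^*$. The paper's argument is shorter and, more importantly, insensitive to the exact constant in front (which matters because the paper's own $\ell^{1/2}$ in \eqref{outlier_eq} and \eqref{outlier_eq_3} is really $(\delta\ell)^{1/2}$ after the $m^{-1/2}$ normalization---your algebraic check only closes with that correction, whereas the baseline--plus--linearity argument is immune to it).
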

\begin{proof}
The first part follows from the discussion before the lemma. For the second part, from Lemma \ref{qf_lem}, it folows that for $w = u$, we have $\E w^\top (t^2 I_p - N^\top N)^{-1}u \to - m(t^2)$, so that in this case the claim holds with $\kappa=1$. For other vectors $w$, the result follows by linearity.
\end{proof}

An analogous version of Lemma \ref{outlier_eq_lem} holds with convergence in expectation replaced by convergence a.s. Next, we will use this lemma to prove the two parts of  Lemma \ref{denoise_aux_2}. 

\subsubsection{Part 1}
\label{denoise_aux_lem_2_part_1}
Recall the notation $R=(t^2 I_n - NN^\top)^{-1}$
and define $\tilde R = (t^2 I_p - N^\top N)^{-1}$.
To show $ \E  z_i^2 (\hat u^\top D_i u)   \to \mu c$, we use Lemma \ref{outlier_eq_lem} with the sequence of vectors $w=z_i^2 D_i u $. To conclude our result by the second part of Lemma \ref{outlier_eq_lem}, it is enough to establish the following claims
\benum
\item $\E z_i^2 u^\top D_i N^\top R Z \to 0$
\item $\E z_i^2 u^\top D_i \tilde R u \to -\mu \cdot m(t^2)$.
\eenum

\emph{Claim 1}: As in the Proof of Lemma \ref{noise_lem} in Sec.\ \ref{pf_noise_lem}, we can write $N = A +E^{**}$, where $E^{**}$ is independent of $z$ and $\|E^{**}\| \to 0$. Therefore, it is easy to see that it is enough to prove the claim with $N$ replaced by $A$. Let us define $R(A) = (t^2 I_n - AA^\top)^{-1}$. Then, as in the proof of Part 2 of  Lemma \ref{qf_lem} in Sec.\ \ref{pf_qf_lem}, we obtain  $ u^\top D_i A^\top R(A) Z \to 0$ a.s. Clearly, these random variables are bounded a.s., since by the proof of Lemma \ref{noise_lem}, $R(A)$ has a.s. uniformly bounded operator norm. Therefore, by the convergence reduction Lemma \ref{conv_reduce} applied to $W_n=z_i$ and $Y_n= u^\top D_i A^\top R(A) Z $---valid since $\E z_i^2=1$ and $\E z_i^4$ is uniformly bounded---we conclude that $\E z_i^2 u^\top D_i A^\top R(A) Z \to 0$. As discussed, this implies the desired Claim 1.

\emph{Claim 2}: As in Claim 1, it is enough to prove the result with $N$ replaced by $A$. From now on in this claim, we will only work with $A$, not $N$; therefore, we denote for brevity $R=R(A)$, $\tilde R=\tilde R(A) = (t^2 I_p -A^\top A)^{-1}$ (and no confusion will arise). Also similarly to Claim 1, it will be enough to prove that $u^\top D_i \tilde R u \to -\mu m(t^2)$ a.s. 

For this, note that $D_i$ of course depends on the $i$-th data vector $Y_i=D_iX_i$. Therefore, we cannot use the concentration of quadratic forms directly. However, since the only dependence occurs in the $i$-th sample, we can use a rank-one perturbation formula to separate the $i$-th sample, and then control the two resulting terms separately.

For this, we define $a_j =n^{-1/2} D_j \ep_j$ to be the rows of $A$, so that $\tilde R = (t^2I_p -\sum_j a_j a_j^\top)^{-1}$. We also define the perturbed matrix $\tilde R_i = (t^2I_p -\sum_{j\neq i} a_j a_j^\top)^{-1}$. By the matrix inversion formula $(M+aa^\top)^{-1} = M^{-1} - M^{-1}aa^\top M^{-1}/(1+a^\top M^{-1} a)$, we have 
\beq
\label{rank_one_pert_mx}
\tilde R = \tilde R_i + \frac{ \tilde R_i a_i  a_i ^\top \tilde R_i}{1- a_i^\top \tilde R_i a_i}
\eeq
Therefore, 
\beq
\label{rank_one_pert}
u^\top D_i \tilde R u = u^\top D_i \tilde R_{i} u + \frac{(u^\top D_i \tilde R_i a_i)\cdot (u^\top \tilde R_i a_i)}{1- a_i^\top \tilde R_i a_i}.
\eeq
As in the proof of Part 3 of  Lemma \ref{qf_lem} in Sec.\ \ref{pf_qf_lem}, we obtain  $ u^\top D_i \tilde R_i u +u^\top D_i u \cdot m(t^2) \to 0$ a.s. However, $u^\top D_i u = \sum_{j} u_{j}^2 F_j$, where $F_j$ are iid random variables with mean $\mu=\E D_{ij}$. Hence, it is easy to see that $u^\top D_i u  \to \mu $ a.s.  This shows that the first term in \eqref{rank_one_pert} converges to $-\mu \cdot m(t^2)$ a.s. and in expectation. 

It remains to show that the second term in \eqref{rank_one_pert} converges to $0$ in expectation. 
Since $1/(1- a_i^\top \tilde R_i a_i)$ is uniformly bounded a.s., it is enough to show that $\E(u^\top D_i \tilde R_i a_i)\cdot (u^\top \tilde R_i a_i) \to 0$. However, by independence of $\ep_i$ from $D_i, \tilde R_i $, we have $\E(u^\top D_i \tilde R_i a_i)\cdot (u^\top \tilde R_i a_i) = n^{-1} \E u^\top D_i  \tilde R_i D_i^2 \tilde R_i u = O(n^{-1})$ a.s., showing the desired claim. This finishes the proof of Claim 2, and hence that of Part 1 of Lemma \ref{denoise_aux_2}. 

\subsubsection{Part 2}
\label{denoise_aux_2_part_2}
For Part 2 of Lemma \ref{denoise_aux_2}, we need to show that $\E  z_i (\hat u^\top D_i \ep_{i})\to  -\ell^{1/2} \cdot c m\cdot \gamma m(t^2)/[1+m \cdot \gamma m(t^2)]$. Using the Outlier Equation method, Lemma \ref{outlier_eq_lem}, as in Part 1, with the sequence of vectors $w=z_i D_i\ep_i$, it is enough to establish the following claims
\benum
\item $\E z_i \ep_{i}^\top D_i N^\top R Z \to \frac{-m \cdot \gamma m(t^2)}{1+m \cdot \gamma m(t^2)}$
\item $\E z_i \ep_{i}^\top D_i \tilde R u \to 0$.
\eenum

As before, we can work with $A$ instead of $N$, and with $R,\tilde R$ being the resolvents of $A$.  The second claim follows immediately, because $z_i$ is independent of $ \ep_{i}^\top D_i \tilde R u $, since $\tilde R$ does not depend on $z_i$. Therefore,  $\E z_i \ep_{i}^\top D_i \tilde R u = \E z_i \E \ep_{i}^\top D_i \tilde R u =0$.  

For the first claim, noting that $A^\top R = \tilde R A^\top$, and expanding $A^\top Z = \sum_j z_j a_j/n^{1/2}$, where $a_j =n^{-1/2} D_j \ep_j$, we can write 
\begin{align*}
\E z_i \ep_{i}^\top D_i A^\top R Z  &= 
\E z_i \ep_{i}^\top D_i \tilde R A^\top Z 
= \E z_i^2 \ep_{i}^\top D_i \tilde R a_i  +  \sum_{j \neq i} \E z_i z_j \ep_{i}^\top D_i \tilde R a_j \\
&= n^{-1} \E  \ep_{i}^\top D_i \tilde R D_i \ep_i 
= \E  a_{i}^\top \tilde R a_i
\end{align*}
since $z_i$ are independent of $ \ep, \tilde R$, and have mean 0. To control this last term, we use a technique similar to that in Sec.\ \ref{denoise_aux_lem_2_part_1}. Using the rank one perturbation formula \eqref{rank_one_pert_mx}, and denoting $\beta_i = a_i^\top \tilde R_i a_i = n^{-1}\ep_i^\top D_i \tilde R_i D_i \ep_i$, we find
\beqs
a_i^\top \tilde R a_i
= a_i^\top \tilde R_{i} a_i  + \frac{(a_i^\top \tilde R_{i} a_i)^2}{1- \beta_i} 
= \frac{\beta_i}{1- \beta_i}.
\eeqs
By independence and the concentration of quadratic forms, $\beta_i -n^{-1}m\cdot\tr(\tilde R_i ) \to 0$, while by the Marchenko-Pastur law, $n^{-1}\tr(\tilde R_i) \to -\gamma \cdot m(t^2)$. By the convergence reduction lemma,
$
\E  a_{i}^\top \tilde R a_i \to 
-m \cdot \gamma m(t^2)/[1+m \cdot \gamma m(t^2)]
$
This finishes the proof of Part 2 of Lemma \ref{denoise_aux_2}. Therefore, the proof of Lemma \ref{denoise_aux_2} is complete. 

\subsection{Multispiked EBLP MSE}
\label{denoise_pf_multi}
We now show that the formulas from Lemma \ref{denoise_lim} are also valid in the multispiked case. Indeed, 
$$
\E \|\ell_k^{1/2}z_{ik} u_k -\eta_k \hat u_k \hat u_{k}^\top Y_{i}\|^2 
= \ell_k + \eta_k^2 \E (\hat u_{k}^\top Y_{i})^2 -  2\eta_k\ell_k^{1/2} \E z_{ik} (\hat u_{k}^\top Y_{i}) (u_k^\top \hat u_k) .
$$ 
We have already showed in the proof of Lemma \ref{denoise_lim_multi} that the limit of the second term is the same as in the single-spiked case. It remains to characterize the third term. For this, we follow a similar pattern to that used in the proof of Lemma \ref{denoise_lim_multi}  in Sec.\ \ref{denoise_pf}, using the multispiked outlier equation. We sketch the argument below.

As in the proof of Thm.\ \ref{spike_proj_multi} in Sec.\ \ref{pf_multi}, the normalized data matrix $m^{-1/2}\tilde Y = (nm)^{-1/2}Y$ can be written as  $m^{-1/2} \tilde Y =Z L^{1/2} U^\top + N$, where $Z$ is the $n\times r$ matrix with entries $z_{ik}$, $U$ is the $n \times r$ matrix with columns $u_k$, while and $L$ is the $r\times r$ diagonal matrix with entries $\ell_k$. Let $t_k$ be the singular value of $m^{-1/2}\tilde Y$ with left and right singular vectors $\hat Z_k$, $\hat u_k$, and suppose that $t_k$ is not a singular value of $N$. The multispiked outlier equation is now:
\beq
\label{outlier_eq_ms}
\begin{bmatrix} 
t_k \cdot (t_k^2 I_n - NN^\top)^{-1}& 
 (t_k^2 I_n - NN^\top)^{-1} N \\ 
N^\top (t_k^2 I_n - NN^\top)^{-1} & 
t_k \cdot  (t_k^2 I_p - N^\top N)^{-1}
\end{bmatrix} 
\begin{bmatrix} 
Z \cdot L^{1/2} \cdot (U^\top \hat u_k) \\
U  \cdot L^{1/2} \cdot (Z^\top \hat Z_k) 
\end{bmatrix} 
=
\begin{bmatrix} \hat Z_k  \\ \hat u_k \end{bmatrix}
\eeq

For a sequence of random vectors $w=w_p$, denoting the $r$-vectors $W_{n1} = w^\top N^\top (t_k^2 I_n - NN^\top)^{-1} Z$ and $W_{n2} =  w^\top (t_k^2 I_p - N^\top N)^{-1}u$, the analogue of Eq. \eqref{outlier_eq_2} is obtained by taking the inner product of the last $p$ coordinates of the multispiked equation with $w$:
$$
W_{n1} \cdot L^{1/2} \cdot (U^\top \hat u_k) 
+t_k \cdot  W_{n2}  \cdot L^{1/2} \cdot (Z^\top \hat Z_k) 
=w^\top \hat u_k
$$
The outlier equation method formalized in Lemma \ref{outlier_eq_lem} also extends in the natural way, by taking the limits of the above equation. 

Going back to our main argument about EBLP risks, it remains only to characterize the  limit of $\smash{\E z_{ik} (\hat u_{k}^\top Y_{i}) (u_k^\top \hat u_k)}$. By the convergence reduction lemma, it is enough to show that $\smash{\E z_{ik} (\hat u_{k}^\top Y_{i}) \to \mu \ell_k^{1/2} c_k \beta_k}$, where $c_k \ge 0$ is the cosine, while $\beta_k = 1 +\gamma/(\delta\ell_k)$. Examining closely the proof of Lemma \ref{denoise_aux_2}, we see that all claims hold unchanged, and the proofs go through either unchanged or with minimal modifications (similar to the extension of the spike behavior to the multispiked case). We omit the details.

\subsection{Proof of Thm.\ \ref{oos-prop}}
\label{oos-prop-pf}

Similarly to our previous calculations, the MSE equals
\begin{align*}
E_n^{\eta,o} = \E \|S_0 -\hat S_0^\eta\|^2
&=\sum_{k=1}^r\E \|\ell_k^{1/2}z_{0k}u_k -\eta_k \hat u_k \hat u_{k}^\top Y_{0}\|^2 \\
&+\sum_{k\neq j}
 \E (\ell_k^{1/2}z_{0k} u_k -\eta_k \hat u_k \hat u_{k}^\top Y_{0})^\top
  (\ell_j^{1/2}z_{0j} u_j -\eta_j \hat u_j \hat u_{j}^\top Y_{0}).
\end{align*}

The cross terms vanish because $z_{0k}$ are independent zero-mean random variables, and $\hat u_k^\top \hat u_j = 0$. To find the limit, we expand the main term as follows:
$$
\E \|\ell_k^{1/2}z_{0k}u_k -\eta_k \hat u_k \hat u_{k}^\top Y_{0}\|^2
= \ell_k + \eta_k^2 \E (\hat u_{k}^\top Y_{0})^2 -  2\eta_k\ell_k^{1/2} \E z_{0k} (\hat u_{k}^\top Y_{0}) (u_k^\top \hat u_k)
$$
Now, because $Y_{0}$ are independent of $\hat u_j$, we can take expectation over the randomess in $Y_{0}$ to see that
\begin{align*}
&\E  \hat u_{k}^\top Y_{0}  \cdot 
\hat u_{k}^\top Y_{0} 
= \E  \hat u_{k}^\top  \left[ \mu^2\sum_{j=1}^r\ell_j  u_{j}  u_{j}^\top+mI_p +\sigma^2 \sum_{j=1}^r\ell_j  \diag (u_{j}\odot u_{j}) \right] \hat u_{k} \\
&=m
+  \mu^2 \sum_{j=1}^r \ell_j \E (\hat u_{k}^\top u_{j})^2
+\sigma^2 \sum_{j=1}^r\ell_j  \E  \hat u_{k}^\top \diag (u_{j}\odot u_{j})  \hat u_{k}
\to m +  \mu^2 \ell_k c_k^2 .
\end{align*}
On the last line we used the results of Lemma \ref{standard_spike_proj}, as well as the delocalization of $u_k$, and denoted $c_k^2 = c_{kk}^2$ the asymptotic cosine of the $k$-th singular vectors. Moreover  
\begin{align*}
\E z_{0k} (\hat u_{k}^\top Y_{0}) (u_k^\top \hat u_k)
&= \E z_{0k}\left(\sum_{m=1}^r\ell_m^{1/2} z_{0m}  \hat u_{k}^\top D_0 u_{m} + \hat u_{k}^\top D_0 \ep_0 \right)\cdot (u_k^\top \hat u_k) \\
&=  \ell_k^{1/2} \E \hat u_{k}^\top D_0 u_{k}\cdot (u_k^\top \hat u_k) 
\to \mu\ell_k^{1/2}c_k^2.
\end{align*}
Hence,  we have the following convergence: 
$$\E \|\ell_k^{1/2}z_{0k}u_k -\eta_k \hat u_k \hat u_{k}^\top Y_{0}\|^2 \to E^{\eta_k,o}(\ell_k) 
= \ell_k + \eta_k^2 \cdot (m +  \mu^2 \ell_k c_k^2) -  2\eta_k\cdot(\mu \ell_k c_k^2)$$
Therefore, the limit prediction error is $E^{\eta,o}  = \sum_{k=1}^r E^{\eta_k,o}(\ell_k)$, finishing the proof of Thm.\ \ref{oos-prop}.

\subsection{Proof of Prop. \ref{is-vs-oos-prop}}
\label{mse_equal}
First, clearly the optimal shrinkage for in-sample denoising is smaller, because $\mu\ell c^2/(\mu^2 \ell +m) \le \mu\ell c^2)/(\mu^2 \ell c^2+m).$ 
To check that the MSE of in-sample and out-of sample EBLP agree, we verify  that 
$\ell - \mu (\mu \ell  c^2  \cdot \beta)^2/(m t^2) = \ell - \mu (\mu \ell c^2)^2/(\mu^2 \ell c^2 +m).$
Clearly this holds whenever $c^2=0$. Considering the case $c^2>0$, it is enough to show that 
$ \beta^2/(m t^2) =1/(\mu^2 \ell c^2 +m),$
or also that
$(\delta\ell+\gamma)^2/(\delta\ell)^2=t^2/(\delta\ell c^2 + 1).$
Since $t^2 = (\delta\ell+1)[1+\gamma/(\delta\ell)]$, using the formula for $c^2$, this follows immediately, finishing the proof.

\subsection{Proofs for denoising in the unreduced-noise model (Sec.\ \ref{den_po_def2})}
\label{pf_unproj}
The proofs for denoising in the unreduced-noise model from \eqref{po_def2} are very similar to those in the reduced-noise model from \eqref{po_def}. Therefore, while we give the full outline of the proofs, we skip some of the more technical parts that are essentially identical to those in the proof of Thms. \ref{den_thm} and \ref{oos-prop}.

\subsubsection{BLP}
\label{pf_unproj_blp}

Since $S_i = \sum_{k=1}^{r} \ell_k^{1/2} z_{ik} u_k$ and $D_i$ has iid entries with mean $\mu$, we have $\Cov{S_i,Y_i} = \mu  \sum_{k=1}^r\ell_k  u_k u_{k}^\top $. Moreover, 
\[
\Cov{Y_i,Y_i} = \E Y_i Y_i^\top = \E D_i S_i S_i^\top  D_i + \E \ep_i  \ep_i^\top = 
\E D_i  \sum_{k=1}^r\ell_k  u_k u_{k}^\top  D_i + I_p.
\]
Since the entries of $D_i$ have variance $\sigma^2$, the $a,b$-th entry of the first component equals 
$
\sum_{k=1}^r \ell_k u_{ka}u_{kb} \cdot  \E D_{ia} D_{ib} 
= \sum_{k=1}^r \ell_k u_{ka}u_{kb} [\mu^2 + \sigma^2 I(a=b)].
$
Therefore, denoting $\diag(u_{k}\odot u_k)$ the diagonal matrix with entries $\{u_{ki}^2\}$, $i=1,\ldots,p$,
\[
\Cov{Y_i,Y_i} = \mu^2 \sum_{k=1}^r\ell_k  u_k u_{k}^\top + I_p + \sigma^2  \sum_{k=1}^r\ell_k  \diag(u_{k}\odot u_k).
\]
Since $u_k$ are delocalized, the operator norm of the last term converges to zero, $\|\diag(u_{k}\odot u_k)\|_{op} \le C^2 \log^2B(p)/p \to 0$. Hence by using the matrix inversion difference formula $A^{-1}-B^{-1} = A^{-1}( B-A)B^{-1}$, for $A = \Cov{Y_i,Y_i}$ and $B = \mu^2 \sum_{k=1}^r\ell_k  u_k u_{k}^\top + I_p$, so that $\|A-B\|\to 0$, we see that $\|\Cov{S_i,Y_i}A^{-1} Y_i - \Cov{S_i,Y_i}B^{-1} Y_i\|\to 0$. Thus the BLP is asymptotically equivalent to 
$
\mu \sum_{k=1}^r\ell_k  u_k u_{k}^\top \left[ \mu^2 \sum_{k=1}^r\ell_k  u_k u_{k}^\top + I_p \right]^{-1} Y_i.
$
As in the reduced-noise model in Sec.\ \ref{sv_shr_equiv},  it is not hard to show that this is asymptotically equivalent to
$\hat S_i^\tau = \sum_{k=1}^r\tau_k  u_k u_{k}^\top Y_i$,
where $\tau_k = \mu \ell_k/(\mu^2 \ell_k + 1)$.  

To calculate the MSE of $S_i^\tau$ for general $\tau$ in the single-spiked case, we write as in the proof of Thm \ref{den_thm} in Sec.\ \ref{den_pfs}
\[
E\|S_i -\hat S_i^{\tau}\|^2 
= \E(\ell^{1/2}z_i -  \tau u^\top Y_{i})^2  = 
\ell + \tau^2 \E (u^\top Y_{i})^2  -  2\tau \ell^{1/2} \E z_i u^\top Y_{i}.
\]
But $Y_i = D_i S_i +\ep_i$ and in the single-spiked case $S_i = \ell^{1/2} z_i u$, so 
$ \E z_i u^\top Y_{i} = \E \ell^{1/2} z_i^2 u^\top D_i u + \E z_i u^\top\ep_i = \ell^{1/2} \mu$.
Moreover, 
\[
\E (u^\top Y_{i})^2 = \E \ell z_i^2 (u^\top D_i u)^2 + 2\ell^{1/2} \E z_i u^\top D_i u u^\top \ep_i + \E (u^\top\ep_i)^2 = \ell \E (u^\top D_i u)^2 + 1.
\]
Now, denoting by $u(k)$ the entries of $u$
\[
\E (u^\top D_i u)^2 = \E (\sum_k u(k)^2 D_{ik}) (\sum_l u(l)^2 D_{il}) = \mu^2 \sum_{k\neq l} u(k)^2 u(l)^2 +(\mu^2+ \sigma^2) \sum_k u(k)^4.
\]
Since $u$ is delocalized, $\sum_k u(k)^4 \to 0$, so that $\E (u^\top D_i u)^2 \to \mu^2$. In conclusion, we obtain as required $ E\|S_i -\hat S_i^{\tau}\|^2 \to 
\ell + \tau^2  (\ell \mu^2 + 1)  -  2\tau \ell \mu$.

\subsubsection{EBLP}
\label{pf_unproj_eblp}

To find the MSE of EBLP in the single-spiked case, we have 
\[
\E\|S_i -\hat S_i^{\eta}\|^2 
= \E \|\ell^{1/2}z_{i} u -\eta \hat u \hat u^\top Y_{i}\|^2
= \ell + \eta^2 \E(\hat u^\top Y_{i})^2 -  2\eta \ell^{1/2} \E z_i \hat u^\top u \cdot \hat u^\top Y_{i}.
\]
As in Lemma \ref{denoise_lim}, we find that $\E(\hat u^\top Y_{i})^2 \to t^2(\mu^2\ell;\gamma)$, where $t^2(\mu^2\ell;\gamma)$ is defined in Eq. \eqref{spike_eq_white}. Also, by the convergence reduction Lemma \ref{conv_reduce}, choosing the orientation of $\hat u$ such that $\hat u^\top u \ge 0$, for the last term it is enough to characterize the limit of  $\E z_i \hat u^\top Y_{i}$. 

By the same argument as in Lemma \ref{denoise_lim}, we find that 
$
\E z_i \cdot \hat u^\top Y_{i}  \to \mu\ell^{1/2} \cdot  c(\mu^2\ell;\gamma) \cdot [1+\gamma/(\mu^2\ell)].
$
Hence,
\[
\E\|S_i -\hat S_i^{\eta}\|^2 \to \ell + \eta^2 \cdot t^2(\mu^2\ell;\gamma) 
-  2\eta \cdot  \mu\ell \cdot  c^2(\mu^2\ell;\gamma) \cdot [1+\gamma/(\mu^2\ell)],
\]
as claimed. This also shows that the optimal coefficient is $\eta^* = \mu \ell \cdot c^2(\mu^2\ell;\gamma)/[\mu^2\ell+1]$.

\subsubsection{EBLP Out-of-sample}
\label{pf_unproj_eblp_oos}

Finally, for out-of-sample EBLP AMSE in the single-spiked case, we let $(Y_0,D_0)$ be a new sample, and expand
$$
\E \|\ell^{1/2}z_{0}u -\eta \hat u \hat u^\top Y_{0}\|^2
= \ell + \eta^2 \E (\hat u^\top Y_{0})^2 -  2\eta\ell^{1/2} \E z_{0} (\hat u^\top Y_{0}) (u^\top \hat u)
$$
Now, because $Y_{0}$ is independent of $\hat u$, we can take expectation over the randomess in $Y_{0}$ to see that
\begin{align*}
&\E  \hat u^\top Y_{0}  \cdot 
\hat u^\top Y_{0} 
= \E  \hat u^\top   [I_p+\mu^2\ell u u^\top + \sigma^2 \diag(u\odot u)] \hat u \\
&=1 + \mu^2\ell \E (u^\top \hat u)^2 +\sigma^2\E  \hat u^\top \diag(u\odot u) \hat u
\to 1 + \mu^2\ell c^2 .
\end{align*}
On the last line we used that $u$ is delocalized, and the results of Lemma \ref{standard_spike_proj}. Next,  
\begin{align*}
\E z_{0} (\hat u^\top Y_{0}) (u^\top \hat u)
&= \E z_{0}\left(\ell^{1/2} z_{0}  \hat u^\top D_0 u + \hat u^\top\ep_{0}\right)\cdot (u^\top \hat u) \\
&=  \ell^{1/2} \E \hat u^\top D_0 u\cdot (u^\top \hat u) 
\to \ell^{1/2} \mu c^2.
\end{align*}
Hence,  we have the following convergence to the desired answer: 
$\E \|\ell^{1/2}z_{0}u -\eta \hat u \hat u^\top Y_{0}\|^2 \to 
 \ell + \eta^2 (1 + \mu^2\ell c^2) -  2\eta\mu \ell c^2$.
As claimed, the optimal coefficient is $\eta^* = \mu \ell c^2/(1 + \mu^2\ell c^2)  $, while the optimal MSE is $ \ell(1+\mu^2\ell c^2 s^2)/(1+\mu^2 \ell c^2)$.

{\small
\setlength{\bibsep}{0.2pt plus 0.3ex}
\bibliographystyle{plainnat-abbrev}
\bibliography{references}
}

\end{document}